\begin{document}
\begin{spacing}{1}
\bibliographystyle{alpha}
\newtheoremstyle{plain}
  {15pt}   
  {15pt}   
  {\itshape}  
  {0pt}       
  {\bfseries} 
  {.}         
  {5pt plus 1pt minus 1pt} 
  {}          
\newtheorem{thm}{Theorem}[section]
\newtheorem{lem}[thm]{Lemma}
\newtheorem{cor}[thm]{Corollary}
\newtheorem{prop}[thm]{Proposition}
\newtheoremstyle{my_definition}
  {15pt}   
  {15pt}   
  {\upshape}  
  {0pt}       
  {\bfseries} 
  {.}         
  {5pt plus 1pt minus 1pt} 
  {}          
\theoremstyle{my_definition}
\newtheorem{defi}[thm]{Definition} 
\newtheorem{rem}[thm]{Remark}
\newtheorem{exa}[thm]{Example}
\thispagestyle{empty}
\begin{flushright}
\includegraphics[width=0.25\textwidth]{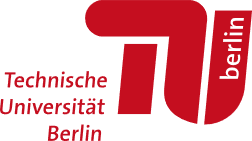}
\end{flushright}
\begin{center}
\vspace*{2cm}
\huge
Percolation phase transitions for the \\
SIR model with random powers\\
\Large
\vspace{2cm}
Masterarbeit in Mathematik\\
\vspace{2cm}
Juni 2019\\
\vspace{4cm}
\end{center}
\begin{tabular}{ll}
Vorgelegt von: & Regine Löffler\\
                     & Matrikelnummer: 396671\\
                     & \\
Erstprüfer:      & Prof. Dr. Wolfgang König \\
                    & Institut für Mathematik \\
                    & TU Berlin \\
                     & \\
Zweitprüfer:      & Dr. Benedikt Jahnel \\
                    & WIAS Berlin \\
\end{tabular}
\newpage
\thispagestyle{empty}
\quad
\newpage
\pagenumbering{Roman}
\large
\noindent Eigenständigkeitserklärung\\

\normalsize
\noindent
Hiermit erkläre ich, dass ich die vorliegende Arbeit selbstständig und eigenhändig sowie ohne unerlaubte fremde Hilfe und ausschließlich unter Verwendung der aufgeführten Quellen und Hilfsmittel angefertigt habe.  
\vspace{2cm}
\begin{flushright}
Berlin, den 7.\,Juni 2019 \,\,\,\,\,\,
\end{flushright}

\newpage
\thispagestyle{empty}
\quad
\newpage
\newpage
\large
\noindent Abstract\\

 \normalsize
\noindent
This thesis considers three models which describe a \textit{multi-hop ad-hoc system}, a certain type of telecommunication systems. 
It means that they consist of users who are sending messages, which can jump to other users in the system in order to reach the target user.
The basic structure of this thesis is to study each of the three models in a separate chapter.
Thereby, the first two models have already been examined extensively, whereas it is the first time the third model is studied.
\medskip
\\ \indent
In all the models our goal is to understand under which conditions users, who are far away from each other, are able to communicate. 
This connectivity problem is the fundamental question of \textit{continuum percolation}, which was introduced in 1961 by Gilbert. He established the main mathematical model which describes a multi-hop ad-hoc system: the Boolean model for Poisson point processes.
\medskip
\\ \indent
In the first part of this thesis we start with introducing the Poisson point process and the Boolean model. 
Afterwards, the proof of the existence of a phase transition between a subcritical phase, where no infinite connected component of users exists, and a supercritical phase, where such a component exists, is given.
First, we study the case of constant radii within the Boolean model as done by Gilbert, then we consider the case of random radii.
We show the latter proof as done in the book 'Continuum percolation' (1996) of Meester and Roy. According to them, the paper \cite{Hall1985} is one of the first ones which focuses on the study of this percolation problem.
\medskip
\\ \indent
In the second part of the thesis we consider the SINR model for Cox point processes.
It has been extensively studied by Tóbiás within his thesis 'Message routeing and percolation in interference limited multihop networks' from 2019. 
He proves that there exists a phase transition under certain stability and connectedness conditions on the intensity measure. 
Note that his work is the first one which studies SINR percolation in dimensions $d \geq 3$ and not only for $d=2$. 
The SINR model for homogeneous Poisson point processes in $\mathbb{R}^2$ has been introduced and studied in \cite{Dousse2005, Dousse2006, Franceschetti2007}.
\medskip
\\ \indent
In the third part of this thesis we study continuum percolation in the SINR model with random powers for Poisson point processes.
Note that the model has already been studied by \cite{KongYeh2007} for Poisson point processes in $\mathbb{R}^2$ and under 
strong boundedness conditions on the random power. In this thesis we weaken these conditions and only assume that the random power is nonnegative, does not equal zero with probability one and has finite expectation. 
We show that there exists a subcritical phase if the path-loss function has a strong enough decay. Furthermore, we prove the existence of a supercritical phase under the condition that the random power exceeds a certain large enough constant with positive probability and some exponential moments are finite.

\newpage
\large
\noindent Zusammenfassung\\

\normalsize
\noindent
Die vorliegende Masterarbeit befasst sich mit drei Modellen, die eine bestimmte Art von Telekommunikationssystemen beschreiben:
Es besteht aus Nutzern, die einander Nachrichten senden, welche auf dem Weg zu ihrem Ziel über andere Nutzer weitergeleitet werden können.
Diese Arbeit ist so strukturiert, dass jedes Modell in einem eigenen Kapitel vorgestellt und analysiert wird.
Die ersten beiden Modelle wurden bereits ausführlich analysiert, das dritte Modell wird im Rahmen dieser Arbeit das erste Mal in der vorliegenden Art und Weise untersucht.
\medskip
\\ \indent
Unser Ziel ist es zu verstehen, unter welchen Bedingungen Nutzer, die weit voneinander entfernt sind, kommunizieren können. 
Dieses Verbindungsproblem ist die fundamentale Frage der \textit{kontinuierlichen Perkolation}, die 1961 von Gilbert eingeführt wurde. Er etablierte das mathematische Grundmodell, welches ein wie oben beschriebenes Telekommunikationssystem modeliert: das Boolesche Modell für Poisson-Punktprozesse.
\medskip
\\ \indent
Im ersten Teil dieser Arbeit führen wir Poisson-Punktprozesse und das Boolesche Modell ein. 
Anschließend zeigen wir, dass ein Phasenübergangs zwischen einer subkritischen Phase, in der keine unendlich große verbundene Komponente von Nutzern vorhanden ist, und einer superkritischen Phase, in der eine solche Komponente vorhanden ist, existiert.
Zuerst untersuchen wir dabei das Boolesche Modell mit konstanten Radien, welches von Gilbert stammt, anschließend betrachten wir 
das Boolesche Modell mit zufälligen Radien.
Wir zeigen den Perkolationsbeweis im letztgenannten Modell wie in dem Buch "Continuum percolation" (1996) von Meester und Roy. Laut ihnen ist  \cite{Hall1985} einer der ersten, der sich auf die Untersuchung dieses Perkolationsproblems konzentriert.
\medskip
\\ \indent
Im zweiten Teil der Arbeit betrachten wir das SINR-Modell für Cox-Punktprozesse.
Es wurde von Tóbiás im Rahmen seiner Dissertation "Message routeing and percolation in interference limited multihop networks" (2019) eingehend untersucht. 
Er zeigt, dass unter bestimmten Stabilitäts- und Vernetzungsbedingungen an das Intensitätsmaß ein Phasenübergang existiert. 
Seine Arbeit ist die erste, die SINR-Perkolation in den Dimensionen $d \geq 3$ untersucht und nicht nur für $d=2$. 
Das SINR-Modell für homogene Poisson-Punktprozesse in $\mathbb{R}^2$ wurde in \cite{Dousse2005, Dousse2006, Franceschetti2007} eingeführt und analyisert.
\medskip
\\ \indent
Im dritten Teil dieser Arbeit untersuchen wir kontinuierliche Perkolation im SINR-Modell mit zufälligen Sendungsleistungen für Poisson-Punktprozesse.
Das Modell wurde bereits von \cite{KongYeh2007} für Poisson-Punktprozesse in $\mathbb{R}^2$ und unter der Annahme, dass die zufällige Sendungsleistung beschränkt ist, untersucht. 
In dieser Arbeit schwächen wir diese Bedingungen ab und gehen nur davon aus, dass die zufällige Sendungsleistung nicht negativ ist, nicht gleich null mit Wahrscheinlichkeit eins ist und eine endliche Erwartung hat. 
Wir zeigen, dass es eine subkritische Phase gibt, wenn die Pfad-Verlust-Funktion stark genug abfällt. Darüber hinaus beweisen wir die Existenz einer superkritischen Phase unter der Bedingung, dass die zufällige Leistung eine bestimmte, ausreichend große Konstante mit positiver Wahrscheinlichkeit überschreitet und einige Exponentialmomente endlich sind.

\tableofcontents
\newpage
\thispagestyle{empty}
\quad
\newpage
\setcounter{page}{1}
\pagenumbering{arabic}
\section{Introduction}
In this thesis, we consider three models which all describe a certain type of telecommunication systems: a \textit{multi-hop ad-hoc system}. 
This means that they consist of users (or devices), which like to send messages (or signals) to each other.
Thereby, a message does not have to jump directly back to a base station and afterwards to the user, to whom it is adressed. Instead, it can jump to any other user in the system in order to reach its target user. The advantages of such direct message trajectories are, that there is no need for expensive base stations and the users might be able to receive and send more messages. \medskip
\\ \indent
The basic structure of this thesis is to study each of the three models in a separate chapter.
Thereby, the idea is to present first two models, which already have been examined extensively, and then, to define and study a third model on our own. In order to be able to refer to these models, we shortly give the names of the different models: In the second chapter we consider the \textit{Boolean model}, in the third chapter we study the \textit{SINR model} and the last model, which is analysed in the fourth chapter, is called \textit{SINR model with random powers}. 
\medskip
\\ \indent
The target of this introduction is to provide an overview of the topics which are discussed in this thesis. Furthermore, we give information about previous and related work on which this thesis is based on. Therefore, we briefly introduce the most important definitions and results. Note that they are explained in more detail in the corresponding chapters.
\medskip
\\ \indent
Before we study how the models are defined, we briefly explain in the following two possible sources of randomness in a multi-hop ad-hoc system which we described above. The first one is the location of the users. 
This source of randomness is included in all our models presented in this thesis. 
We always assume that the user locations are modeled via a random point process, more precisely, we consider two different ones in this thesis. The basic one is the \textit{Poisson point process (PPP)} $X^\lambda= (X_i)_{ i \in I}$, where $\lambda >0$ is called \textit{intensity} and describes the expected (or average) number of Poisson points in a bounded region. 
The advantage of the PPP is, that it has a high degree of independence and even though the process itself is very elementary, the mathematical study can become highly sophisticated. 
The PPP is used in the Boolean model and the SINR model with random powers in this thesis (Chapter 2 and 4) and hence introduced in the beginning of Chapter 2.
The more advanced point process is the \textit{Cox point process (CPP)} $X^\lambda= (X_i)_{ i \in I}$ with intensity measure $\lambda \Lambda$, where $\lambda >0$ and $\Lambda$ is a stationary random measure on $\mathbb{R}^d$, $d \geq 2$. 
Conditioned on this random measure $\Lambda$, the CPP $X^\lambda$ is a Poisson point process with intensity $\lambda \Lambda$.
Even though the mathematical analysis of a CPP becomes more complex, it is still interesting to study this kind of point process, because it enables us to model the telecommunication area in a more realistic way. More precisely, we can incorporate a random environment in our model. For example, it is possible to include random street systems.
In this thesis, the SINR model (Chapter 3) considers the CPP in order to model the locations of the users. The CPP is introduced in the beginning of Chapter 3.
\medskip
\\ \indent
The second source of randomness lies in the message trajectories. This means that it is possible to include a random factor in the message transmission between users in order to model a more realistic situation. In this thesis, not all models incorporate this source of randomness.
In our first model, which is the Boolean model, we consider the case of non-random message trajectories (see Chapter 2.2.2), as well as the case where they are random (see Chapter 2.2.3). 
In the SINR model, the message trajectories are not random, whereas - as the name suggests - the SINR model with random powers incorporates random message trajectories. 
Note that there are several ways of including a random message transmission in the different models (random radii or random powers), but they all have in common that the strength of the emitted signal from a user becomes random. 
\medskip
\\ \indent
In difference to a model with base stations, the position of the users in space, which cannot be controlled as we just explained, is more relevant for a successful message transmission. For example it is interesting to examine if every user has another user close enough such that she can send her a message. In the next step, one can study if it is possible to send a message even further to other users until it reaches the target user. 
Another question is, how long such message trajectories can become.
All together, we can say that two of the most fundamental topics concerning spatial telecommunication models are \textit{coverage} and \textit{connectivity}.
The first topic studies how much of the area can be reached by the signals, which are sent from the users.
The second topic, on which we focus in this thesis, examines how far a message can travel through the above described multi-hop ad-hoc system.
This is the fundamental question of \textit{continuum percolation}, which was introduced in 1961 by Gilbert in \cite{Gilbert1961}. He established the main mathematical model which describes a multi-hop ad-hoc system: the Boolean model. 
Thereby we interpret the points of a homogeneous PPP $X^\lambda$ in $\mathbb{R}^2$ with intensity $\lambda >0$ as the users of the network.
Then, we put closed areas around these users and assume that a user can successfully send a message to another user if she is located within her attached area. Such areas could for example be closed balls with a constant radius $r>0$.
Two points, respectively users, are then connected if their distance is smaller than $r$. The resulting graph is called \textit{Gilbert's graph} and two users are able to communicate along the edges of this graph. 
\medskip
\\ \indent
In order to analyse the connectivity in this model we introduce the notion of \textit{percolation} and say that the graph \textit{percolates}, if there exists an infinite connected component and far away users are able to communicate.
Gilbert has shown that there exists a phase transition in the Boolean model with constant radii. This means that there exists a critical density depending on the radius $\lambda_{\mathrm{c}}(r) \in (0,\infty)$ such that, almost surely, for $\lambda<\lambda_{\text{c}}(r)$, the graph does not percolate and for $\lambda >\lambda_{\text{c}}(r)$, the graph percolates. 
The phase, where $\lambda<\lambda_{\text{c}}(r)$ is called the \textit{subcritical phase} or \textit{subcritical regime}, whereas for $\lambda>\lambda_{\text{c}}(r)$ we have a so called \textit{supercritical phase} or \textit{supercritical regime}.
A precise definition of the Boolean model and the proof of the phase transition in the case where the radii are constant is given in the Chapters 2.2.1 and 2.2.2.
Note, that it is possible to show Gilbert's result as well for larger dimensions,  which means for a PPP in $\mathbb{R}^d$, where $d \geq2$.
Thorough analyses and generalizations of Gilbert's model for Poisson point processes can be found for example in \cite{MeesterRoy1996}, \cite{Franceschetti2007} and \cite{Baccelli2009}.
The model has as well been studied for other point processes. In \cite{Blaszczyszyn2010, Blaszczyszyn2013} the authors consider for example sub Poisson point processes, \cite{Ghosh2016} examines Ginibre and Gaussian zero point processes and \cite{Stucki2013, Jansen2016} study Gibbsian point processes.
\medskip
\\ \indent
As we aim to model a telecommunication system in a realistic way, we consider in this thesis as well the more advanced CPP,  which we already introduced before. The CPP enables us to create a random environment in which the users are located.
In the paper 'Continuum percolation for Cox point processes' from 2018, the authors Hirsch, Jahnel and Cali 
consider the Boolean model with a CPP $X^\lambda$ with intensity measure $\lambda \Lambda$ on $\mathbb{R}^d$, where $d \geq 2$. They show that under certain conditions on the stability and the connectedness of the random measure $\Lambda$, there exists as well a phase transition.
More precisely, it is shown in \cite{HirschJahnelCali2018}, that, if $\Lambda$ is stabilizing, the critical density is strictly positive, which means that there exists a subcritical phase. Furthermore, it is proven that the critical density is finite, i.e., there exists a supercritical phase, if $\Lambda$ is not only stabilizing, but as well asymptotically essentially connected, which is a stronger assumption on the random measure.
In this thesis we do not present the results of \cite{HirschJahnelCali2018} in detail, but we explain them briefly in Chapter 3.2, where they are used in order to prove the subcritical phase within the SINR model.
\medskip
\\ \indent
Before we continue to explain this SINR model, we first consider a small adaption of the Boolean model, which is studied in Chapter 2.2.3: The Boolean model with random radii.
Thereby, we consider in addition to the PPP $X^\lambda = (X_i)_{ i \in I}$, which represents the locations of the users, another random point process $\rho= (\rho_i)_{ i \in I}$.
The latter point process expresses the random radii of the closed balls, which are attached to the Poisson points. Note that we assume that the points of the point process $\rho$ are independent of each other and independent of the PPP $X^\lambda$.
As the points of $\rho$ represent a radius, it makes sense to assume that $\rho \geq 0$. Furthermore, we do not want to analyse a trivial model and hence we only consider the case where the probability that $\rho$ equals zero is strictly smaller than $1$.
Then, one can show that there exists a subcritical phase if $\mathbb{E}[\rho^{2d-1}]$ is finite. Furthermore, there exists a supercritical phase if $\mathbb{E}[\rho^d]$ is finite. This is an interesting result as these conditions are not very strong.
In this thesis, we show the proof of the just described phase transition as done in the book 'Continuum percolation' (1996) by Meester and Roy. According to them, the paper \cite{Hall1985} is one of the first ones, which focuses on the study of continuum percolation within a Boolean model with random radii (see \cite[page 90]{MeesterRoy1996}).
\medskip
\\ \indent
Another topic that arises when we try to model a telecommunication system is the interference from other users. In order to understand this problem, let us consider a space with only two users who are close enough to each other to send a message. If we now include lots of other users next to them, which are as well sending messages at the same time, it could happen that the message transmission between the first two users is no longer possible.
The model which analyses this problem is the SINR model. It is formally introduced in Chapter 3.2 of this thesis.
As in the Boolean model, we express the position of the users in space via a random point process $X^\lambda$ in $\mathbb{R}^d$ for $d \geq 2$. 
Again, we define in which case a message transmission between these user is successfull and in which it is not. 
Remember that in the Boolean this was expressed via the radius parameter. In the SINR model the situation becomes a bit more complex as we like to incorporate the interference from the other users which we explained above. 
Let us consider two points of our random point process $X^\lambda$, where point $X_i$ would like to send a signal to point $X_j$.
Then we define the signal to interference and noise ratio (SINR) as follows:
\begin{equation*}
\text{SINR}(X_i, X_j, X^\lambda, P, N_0, \gamma):=
\frac{P \cdot \ell(|X_i-X_j|)}{N_0+\gamma \sum\limits_{k \neq i,j} P \cdot \ell(|X_k-X_j|)}.
\end{equation*}
Thereby, the parameter $P>0$ represents a fixed signal power, the parameter $N_0 \geq 0$ expresses a constant environment noise and 
$\gamma \geq 0$ is a technology factor, which expresses how much weight is given to the interference compared to the signal.
The so-called path-loss function
$\ell \colon [0, \infty) \rightarrow [0, \infty)$ is a decreasing function and describes the propagation of the signal strength over distance.
Note that the interference term is given by the following sum over all points of the point process: $\sum\limits_{k \neq i,j} P \cdot \ell(|X_k-X_j|)$.
Now, we connect the two points $X_i$  and $X_j$ if the SINR between them is larger than a given technical threshold $\tau>0$ in both directions. The resulting graph is called SINR graph.
We can analyse this SINR model for various point processes. 
As explained before, the basic mathematical approach is the PPP. The SINR model with a homogeneous PPP in $\mathbb{R}^2$ has been introduced and studied in \cite{Dousse2005, Dousse2006, Franceschetti2007}.
\medskip
\\ \indent
Remember that we focus in this thesis on the analysis of the existence of a phase transition within telecommunication models. For $\lambda >0$ let us therefore write $\gamma^*(\lambda)$ for the supremum of all $\gamma >0$ for which the SINR graph percolates.
It is very easy to understand that there exists a subcritical phase in the SINR model. Remember that in a Boolean model with any radius $r>0$ the critical density $\lambda_{\text{c}}(r)$ is positive and finite. Furthermore, note that for $\gamma =0$, the SINR graph equals the Gilbert's graph with radius $r_{\text{B}}=\ell^{-1}(\tau N_0/P)$.
It follows that this Gilbert's graph contains all SINR graphs with positive $\gamma$. We can conclude that for $\lambda > \lambda_{\text{c}}(r_{\text{B}})$, we have $\gamma^*(\lambda)=0$ and hence the existence of a subcritical phase is shown.
Under suitable integrability and boundedness conditions on the path-loss function $\ell$, it is possible to show the existence of a supercritical phase. More precisely, \cite{Dousse2006} has shown that for any intensity $\lambda > \lambda_{\text{c}}(r_{\text{B}})$, we obtain $\gamma^*(\lambda)>0$.
\medskip
\\ \indent
In this thesis, we consider the SINR model for a CPP $X^\lambda$ with intensity measure $\lambda \Lambda$ on $\mathbb{R}^d$, where $d \geq 2$. It has been extensively studied by Tóbiás within the second part of his thesis 'Message routeing and percolation in interference limited multihop networks' from 2019, which is an extended version of his previous paper 'Signal to interference ratio percolation for Cox point processes' from 2018. Note that his work is the first one which studies SINR percolation in dimensions $d \geq 3$ and not only for $d=2$. 
An important part of the work of Tóbiás is the proof of a phase transition, which we present in the third chapter. Thereby, we start with the proof of the existence of a subcritical phase. Remember that \cite{HirschJahnelCali2018} has shown for the Boolean model with CPPs that there exists a subcritical phase under the condition that $\Lambda$ is stabilizing. Now, \cite{Tobias2019} has proven that there exists a subcritical phase in the SINR model with CPPs under the same stability condition. The proof can be shown easily using a similar argument as we explained above in the case of a SINR model for a PPP. 
The only difference is, that we do not base the proof on Gilbert's result, which considers a PPP, but on \cite{HirschJahnelCali2018}, which considers a CPP as we already explained. 
A formal proof is given in Chapter 3.2.
To show the existence of a supercritical phase is much more complex. 
Analogous to the Boolean model with a CPP, we thereby need a stronger connectedness condition on the random measure $\Lambda$. 
More precisely, we assume that $\Lambda$ is assymptotically essentially connected. In addition, at least one of the following assumptions has to hold true:
(1) The path-loss function $\ell$ has compact support, (2) $\Lambda(\text{Q}_1)$ is almost surely bounded, or (3) there exists a parameter $\alpha >0$ such that $\mathbb{E}[\exp(\alpha\Lambda(\text{Q}_1))]$ is finite and the path-loss function $\ell$ satisfies a certain stronger decay property. Thereby, $\text{Q}_1$ describes the cube with side length one, which is centered at the origin.
The proof of the supercritical phase is explained in detail in Chapter 3.3. 
It consists of three steps. First, we map the continuous percolation problem to a discrete percolation on a lattice. 
In the second step, we prove that the lattice percolates and in the last step, we show that this implies percolation in our original continuous model. The idea of this structure comes from \cite[Theorem 1]{Dousse2006}, whereas the discrete model has its origins in \cite{HirschJahnelCali2018}.
\medskip
\\ \indent
The third model, which we study in this thesis incorporates features of the previously introduced models and aimes to describe a more realistic telecommunication system. 
We call this model \textit{SINR model with random powers}. 
As before, the position of the users (or devices) within the telecommunication system is expressed via a random point process $X^\lambda = (X_i)_{i \in I}$. In this thesis, we consider the PPP with intensity $\lambda$. As explained before, it is easier to handle than for example the CPP, but still allows us to model a realistic situation.
As the name of the model indicates, the successfull message transmission depends again on the signal to interference and noise ratio (SINR), but instead of a fixed signal power $P$ we consider a random point process $\rho=(\rho_i)_{i \in I}$. 
We make three (not very strong) conditions on $\rho$: It is nonnegative, the probability that it equals zero is strictly smaller than one and its expectation is finite. 
Now the idea is to attach to each user $X_i$ a random signal power $\rho_i$.
Hence, the SINR is defined as
\begin{equation*}
\text{SINR}((X_i, \rho_i), (X_j, \rho_j), X^\lambda, \rho, N_0, \gamma):=\frac{\rho_i \cdot \ell(|X_i-X_j|)}{N_0+\gamma \sum\limits_{k \neq i,j} \rho_k \cdot \ell(|X_k-X_j|)}.
\end{equation*}
Apart from the signal power, the parameters are defined as before. Moreover, we connect two points $X_i$ and $X_j$ if the SINR between them is larger than a constant threshold $\tau >0$ in both directions.
A detailed definition of the SINR model with random powers is given in Chapter 4.1.
Note that the model already has been studied by \cite{KongYeh2007} for PPPs in $\mathbb{R}^2$ and under 
stronger conditions on the random power $\rho$. They assumed that it is bounded by a strictly positive minimal and a finite maximal value, both having a positive probability.
\medskip
\\ \indent
We focus in this thesis on the study of continuum percolation and hence we first analyse the existence of a subcritical phase and afterwards the existence of a supercritical phase. As we elaborated this part of the thesis on our own, we now explain a bit more detailed how the proofs are carried out.
In Chapter 4.2 we show that there exists a subcritical phase if there exist parameters $r^* \in (0,\infty)$ and $\alpha, \beta >0$ with $\alpha \beta > 2d-1$ such that $\ell(r) \leq r^{-\beta}$ and $\mathbb{P}(\rho > r) \leq r^{-\alpha}$ for $r \geq r^*$.
The conditions show that we need a strong enough decay of the path-loss function in order to prove a subcritical phase. 
Examples for path-loss functions which fulfill these conditions are those with bounded support, $\ell(r) = (1+r)^{-p}$ and $\ell(r) = \min\{1,r^{-p}\}$, where $p \geq 1$. 
We first show the existence of a subcritical phase in the SINR model with $\gamma =0$, which means we do not consider any interference. Hence, the model is sometimes denoted by SNR (signal to noise ratio) model. 
It clearly holds that all connections in the SINR graph exist in the SNR graph. Thus, if there is no infinite cluster in the latter one, there is no one in the SINR graph. So the main work is to prove the existence of a subcritical phase in the SNR model. Thereby we us the fact that our SNR model is very similar to a Boolean model with random radii. More precisely we can show that all connections of the SNR graph exist in a suitable Boolean model. In Chapter 2.2.3, we have shown that in any Boolean model with random radii, there exists a phase transition if the random radius variable satisfies an integrability condition. Hence, it suffices to show that this condition holds. Thereby we need our  above described assumptions on the path-loss function.
\medskip
\\ \indent
In Chapter 4.3 we prove the existence of a supercritical phase in our SINR model with random powers if the following two conditions hold true: First, there exists
$r \geq N_0 \tau /\ell (0)$ such that $\mathbb{P}(\rho > r) >0$ and secondly, there exists a parameter $\alpha >0$ such that $\mathbb{E} [ \exp( \alpha \rho)]$ is finite.
As in the percolation proof of the SINR model with constant powers in Chapter 3, this is done in three steps:
We first map the continuous percolation problem to a discrete percolation problem. Then, we show the existence of percolation in the discrete case and afterwards, we prove that this implies percolation in our continuous SINR model with random powers.
Let us explain in words how the steps are performed.
In the first step, when we map the continuous percolation on the lattice $\mathbb{Z}^d$, we separate two aspects of the percolation problem. First, we take care of the connectedness. Thereby we define a site of the lattice as \textit{good} if there exists a nearby Poisson point with a large enough transmission power and any two nearby Poisson points are connected by a path in a Gilbert's graph with a constant and small enough radius. Otherwise it is called \textit{bad}.
Secondly, we ensure that the interference is bounded. 
All together, we call a site of the lattice \textit{open} if it is good and has bounded interference.
In the second step of the proof, we show that the lattice percolates. Thereby, we first consider the connectedness problem and show that the probability that any pairwise distinct sites are bad can be bounded arbitrarily small.
Afterwards we continue with the interference problem and prove that the probability that the interference of any pairwise distinct sites is larger than a parameter can be bounded arbitrarily small. In the end of Step 2, we bring the connectedness and the interference problem together and use the famous Peierls argument to show the percolation of the lattice.
In Step 3, we show that this implies continuum percolation in the SINR model with random transmission powers. This is a quite technical step where we basically simply put all our definitions together.
\medskip
\\ \indent
In the end of our thesis, we briefly discuss in Chapter 5 how the models which are presented in this thesis could be modified in order to make them even more realistic.
\newpage
\section{Continuum percolation in a Boolean model}
\noindent 
In this chapter, we describe a very common model of a telecommunication system, which was  introduced in 1961 by Gilbert in \cite{Gilbert1961}: the Boolean model. 
Thereby, the users of the telecommunication system are represented by the points of a Poisson point process (PPP). 
Therefore, we first introduce the PPP and some useful characterizations of it.
Afterwards, we define how the users of the telecommunication system are able to communicate. Thereby, we first consider non-random message trajectories and afterwards random ones.
In both cases, we study how far messages can travel through the system.
\subsection{The Poisson point process}
The basic mathematical method to model random locations of point-like objects in the Euclidean space is the PPP.
It is for example used in spatial telecommunication models, where the Poisson points represent the users or devices, that send and receive messages. 
In this thesis, we will focus on this application of PPPs, but there are as well other application areas.
The main reason, why it is very common to use a PPP, is, that it has a very high degree of independence of the points and hence it is easier to derive tractable mathematical formulas.
We first introduce some basics on topology and measurability in order to give a precise definition of a PPP. 
The content of the following section can be found in \cite[page 3ff.]{JahnelKoenig2018}. \medskip
\\ \indent
Let us fix the dimension $d \in \mathbb{N}$ and a measurable set $D \subset \mathbb{R}^d$, which is called the \textit{communication area}.
Moreover, we assume that a random point process in $D $ is given. We denote it by $\mathbb{X} = (X_i)_{i \in I}$, where $I$ is a random index set.
Hence, all the points of the point process, which means in our application of a telecommunication network all the users, are located in the communication area.
We do not aim to distinguish between the different points $X_i$, $i \in I$, but we are interested in the set $\mathbb{X} = \{X_i \colon i \in I\}$. 
Hence, we would like that $X_i \neq X_j$ for any $i \neq j$ and that in any compact subset of the communication area $D$ there are only finitly many of the points of $\mathbb{X}$.
This means, we would like to have a random variable $\mathbb{X}$ with values in the set
\begin{equation*}
\mathbb{S}(D) := 
\{ A \subset D \colon \# (A \cap B) < \infty \text{ for any bounded set } B \subset D \}.
\end{equation*}
The elements of the above set are called
\textit{locally finite sets} and their point measures are so-called \textit{Radon measures}.
It means that these measures assign to any compact set a finite value.
In the following, we call an element of $\mathbb{S}(D)$ a \textit{point cloud in $D$}.
Note that we can equivalently consider the point measure $\sum\limits_{i \in I} \delta_{X_i}$ instead of the set $\mathbb{X} = \{X_i \colon i \in I\}$. The point measure is as well called \textit{Dirac measure} and defined as $\delta_{X_i}(A) = \mathds{1}_A(X_i)$ for $A \subset D$.
Thereby, $\mathds{1}_A (\cdot)$ describes the indicator function on $A$, i.e., we have 
$\mathds{1}_A(z) = 1$ if $z \in A$ and $\mathds{1}_A (z) = 0$ otherwise.
\medskip
\\ \indent
In order to define the distribution of a \textit{random} point cloud 
$\mathbb{X} = (X_i)_{i \in I}$ in $D$, which is a random point process with values in 
$\mathbb{S}(D)$, we need a measurable structure on its state space.
The one, we introduce in the following, is a Borel $\sigma$-algebra, and hence it suffices to define topologies.
As we can consider $\mathbb{S}(D)$ as a set of point measures, it is natural to describe a topology by testing elements of $\mathbb{S}(D)$ against a suitable class of functions. 
Thereby, the idea is, to test the point cloud only in local areas.
In this thesis, we choose therefore the set of measurable functions from $D$ to $\mathbb{R}$ with compact support. This set of test functions is denoted by $\mathcal{M}(D)$.
Hence, we consider the functional
\begin{equation}\label{welle}
S_f(x) = 
\Big\langle \, f, \sum\limits_{i \in I} \delta_{x_i} \, \Big\rangle
=
\int f(y) \sum\limits_{i \in I} \delta_{x_i} (\text{d}y)
=
 \sum\limits_{i \in I} \int f(y)\delta_{x_i} (\text{d}y)
=
 \sum\limits_{i \in I} f(x_i),
\end{equation}
where $f \in \mathcal{M}(D)$ and $x = (x_i)_{i \in I} \in \mathbb{S}(D)$.
With these information we are now able to define a measurable structure on the set $\mathbb{S}(D)$.
\begin{defi}
The smallest topology on $\mathbb{S}(D)$, such that, for any function $f \in \mathcal{M}(D)$, the map 
$x \mapsto S_f(x)$ is continuous, is called \textit{$\tau$-topology}. 
\end{defi}
Note that the $\tau$-topology is not the only measurable structure one can use in the theory of point processes. 
It is possible to choose in the above described setting another set of test functions, for example the set of continuous functions from $D$ to $\mathbb{R}$ with compact support. But it is as well possible to establish the theory of point processes in a general measurable space
without any reference to topologies, see for example \cite{Last2017}.
Nevertheless, in this thesis we aim to keep the model as simple as possible and consider therfore the $\tau$-topology.
There are a few useful characterizations of it, some of them are introduced in the following.
Remember thereby that our goal is to understand how the random point cloud $\mathbb{X}$ is distributed in the communication area $D$. We first introduce the definitions which help us to define the PPP.
\begin{defi}
For a given point cloud $x = (x_i)_{i \in I}$ in $\mathbb{S}(D)$ and a given measurable set $A$ in $D$, 
we denote the \textit{number of points of $x$ in $A$} by
\begin{equation*}
N_x(A) = \# \{i \in I \colon x_i \in A \}
= S_{\mathds{1}_A} (x) \in \mathbb{N}_0 \cup \{ \infty \}.
\end{equation*}
\end{defi}

With the above definition we can describe the measure $\mu$ on $D$, which gives a first idea of how many points of $\mathbb{X}$ are located in a given set.
It is defined as follows.
\begin{defi}
The \textit{intensity measure} $\mu$ of random point cloud $\mathbb{X}$ in $D$ is defined by 
\begin{equation*}
\mu (A) = \mathbb{E} [N_{\mathbb{X}} (A) ], \,\,\, A \subset D \text{ measurable.}
\end{equation*}
\end{defi}
Now, as we have introduced the topological structure of our model, we are able to give the formal definition of a PPP.
\begin{defi}
Let $\mu$ be a Radon measure on $D$. A random point process $\mathbb{X}$ is called a 
\textit{Poisson point process (PPP) with intensity measure $\mu$} if, for any $k \in \mathbb{N}$ and any pairwise disjoint bounded measurable sets $A_1,...,A_k \subset \mathbb{R}^d$, the counting variables $N_\mathbb{X}(A_1),...,N_\mathbb{X}(A_k)$ are independent Poisson distributed random variables with parameters $\mu(A_1),...,\mu(A_k)$. 
\newpage
\noindent
That means, if
\begin{equation*}
\mathbb{P}(N_\mathbb{X}(A_1) = n_1, ..., N_\mathbb{X}(A_k) = n_k ) = 
\prod\limits_{i=1}^k \Big( \mathrm{e}^{-\mu(A_i)} \frac{\mu(A_i)^{n_i}}{n_i !}\Big) 
\end{equation*}
for any parameteres $n_1,...,n_k$ in $ \mathbb{N}_0$.
\end{defi}
Remember that within our model the Poisson points represent the locations of the users in space. As mentioned before, it is possible to choose therefore as well other random point processes, for example the Cox point process, which is introduced in Chapter 3.1.
Hence we give the following useful characterizations, if possible, for any random point cloud $\mathbb{X}$ with values in $\mathbb{S}(D)$. Of course they hold in particular for the Poisson point process. 
\medskip
\\ \indent
First note that the distribution of a random point cloud $\mathbb{X}$ is not fully characterized by its intensity measure, which we introduced above. 
It gets more precise when we consider a special sort of point clouds, the so-called stationary random point clouds, as we can see in the following. 
\begin{defi}
A random point cloud $\mathbb{X}$ is called \textit{stationary} or \textit{homogeneous} if its distribution is identical to the one of $\mathbb{X}+z$ for any $ z \in \mathbb{R}^d$.
\end{defi}
Hence, stationarity means that the distribution is invariant under spatial shifts.
It follows immediately that the communication area of a stationary point cloud must be the whole space, i.e., $D = \mathbb{R}^d$.
Furthermore, it holds that its intensity measure equals a multiple of the Lebesgue measure as stated in the following lemma.
\begin{lem}
Let $\mathbb{X}$ be a random point cloud in $\mathbb{R}^d$ with intensity measure $\mu$ such that
$\mu  ( [0,1]^d ) < \infty$.
If $\mathbb{X}$ is stationary, then it holds
$\mu = \lambda \cdot \mathrm{Leb}$ with $\lambda = \mu  [0,1]^d$.
\end{lem}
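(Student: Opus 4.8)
The plan is to first translate the stationarity of the point cloud into translation invariance of its intensity measure $\mu$, and then to invoke the classical fact that a locally finite translation-invariant measure on $\mathbb{R}^d$ is a constant multiple of Lebesgue measure. For the first step, fix $z \in \mathbb{R}^d$ and a bounded measurable set $A \subset \mathbb{R}^d$, and observe that $N_{\mathbb{X}+z}(A) = N_{\mathbb{X}}(A-z)$ holds pointwise. Since $\mathbb{X}$ and $\mathbb{X}+z$ have the same distribution, taking expectations yields $\mu(A) = \mathbb{E}[N_{\mathbb{X}}(A)] = \mathbb{E}[N_{\mathbb{X}+z}(A)] = \mathbb{E}[N_{\mathbb{X}}(A-z)] = \mu(A-z)$, so $\mu$ is invariant under every translation. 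Combined with $\mu([0,1]^d) < \infty$, translation invariance shows that $\mathbb{R}^d$ is a countable union of unit cubes each of finite $\mu$-measure, hence $\mu$ is $\sigma$-finite.

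Next I would determine the value of $\mu$ on boxes with rational endpoints. Set $\lambda := \mu([0,1)^d)$; since $[0,1)^d \subset [0,1]^d$, we have $\lambda < \infty$. For fixed $n \in \mathbb{N}$, the half-open cube $[0,1)^d$ is the disjoint union of the $n^d$ translates $\tfrac{k}{n} + [0,\tfrac{1}{n})^d$ with $k \in \{0,\dots,n-1\}^d$, so finite additivity and translation invariance give $\mu([0,\tfrac{1}{n})^d) = \lambda/n^d$. Concatenating finitely many such small cubes, every half-open box $B = \prod_{i=1}^d [a_i,b_i)$ with rational endpoints satisfies $\mu(B) = \lambda \cdot \mathrm{Leb}(B)$.

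To finish, note that the family of half-open boxes with rational endpoints, together with $\emptyset$, is a $\pi$-system generating the Borel $\sigma$-algebra of $\mathbb{R}^d$, and that both $\mu$ and $\lambda \cdot \mathrm{Leb}$ are $\sigma$-finite along the exhausting sequence $[-m,m)^d$, $m \in \mathbb{N}$. Since the two measures agree on this $\pi$-system, the standard uniqueness theorem for measures forces $\mu = \lambda \cdot \mathrm{Leb}$ on all Borel subsets of $\mathbb{R}^d$. Evaluating at the unit cube then gives $\mu([0,1]^d) = \lambda \cdot \mathrm{Leb}([0,1]^d) = \lambda$, which identifies the constant as claimed (and incidentally shows that $\partial[0,1]^d$ is $\mu$-null, so the distinction between the open, half-open, and closed cube is immaterial).

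The argument is essentially routine measure theory; the only point that deserves a little care is the bookkeeping between half-open and closed cubes, which is why I would introduce $\lambda$ via $\mu([0,1)^d)$ and only compare it with $\mu([0,1]^d)$ at the very end. The one nontrivial ingredient is $\sigma$-finiteness, needed to apply the uniqueness theorem, and this is precisely what the hypothesis $\mu([0,1]^d) < \infty$ provides once translation invariance has been established.
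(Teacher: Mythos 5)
Your argument is correct: translation invariance of $\mu$ follows from stationarity via $N_{\mathbb{X}+z}(A)=N_{\mathbb{X}}(A-z)$, the dyadic (or rational) decomposition identifies $\mu$ with $\lambda\cdot\mathrm{Leb}$ on a generating $\pi$-system, $\sigma$-finiteness is guaranteed by $\mu([0,1]^d)<\infty$ plus invariance, and the uniqueness theorem closes the argument. Note that the paper does not supply a proof of this lemma at all -- it is quoted from the cited lecture notes \cite{JahnelKoenig2018} -- so there is no in-paper argument to compare against; your proof is the standard one for this classical fact, and the care you take in distinguishing $[0,1)^d$ from $[0,1]^d$ (and showing the boundary is $\mu$-null only at the end) is exactly the right bookkeeping.
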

This means, that the intensity measure of a stationary point process can be described by just one value $\lambda  > 0$.
The parameter $\lambda$ is called \textit{intensity} of the stationary point process. \medskip
\\ \indent
In the following we consider two other important characterizations of the distribution of $\mathbb{X}$, the first one is given in terms of the so-called \textit{Laplace transform}, the second one is the Campbell's theorem.
\begin{defi}
For a random variable $\mathbb{X}$ with values in $\mathbb{S}(D)$ and a measurable function $f \colon \mathbb{R}^d \rightarrow [0, \infty)$, the \textit{Laplace transform} is given by
\begin{equation*}
\mathcal{L}_{\mathbb{X}}(f) = 
\mathbb{E}  \Big[ \mathrm{e}^{- \sum\limits_{i \in I} f (X_i) } \Big]
\in [0,1].
\end{equation*}
\end{defi}
Note that we can combine this definition with equation (\ref{welle}) and get another common formula for the Laplace transform, which is
$\mathcal{L}_{\mathbb{X}}(f)  =  \mathbb{E}  [ \mathrm{e}^{- S_f(\mathbb{X})} ]$.
\begin{lem}\label{laplace_unique}
The distribution of a $\mathbb{S}(D)$-valued random variable $\mathbb{X}$ is uniquely determined by its Laplace transform $\mathcal{L}_{\mathbb{X}}(f)$ for all measurable nonnegative functions $f$ with compact support.
\end{lem}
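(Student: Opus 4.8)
The plan is to reduce the assertion, via the construction of the $\tau$-topology and a monotone-class argument, to the classical uniqueness theorem for multivariate Laplace transforms. Recall that the Borel $\sigma$-algebra on $\mathbb{S}(D)$ (for the $\tau$-topology) is generated by the functionals $x \mapsto S_f(x)$, $f \in \mathcal{M}(D)$, equivalently by the counting maps $x \mapsto N_x(A)$ for bounded measurable $A \subset D$; this is the standard fact recalled in \cite{JahnelKoenig2018}. Hence the collection of sets of the form
\[
\big\{ x \in \mathbb{S}(D) : \big( N_x(A_1), \dots, N_x(A_k) \big) \in B \big\},
\]
with $k \in \mathbb{N}$, bounded measurable $A_1, \dots, A_k \subset D$, and Borel $B \subset (\mathbb{N}_0 \cup \{\infty\})^k$, is a $\pi$-system (closed under intersection, by concatenating the lists of sets and intersecting the corresponding constraints) that generates the Borel $\sigma$-algebra. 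By Dynkin's $\pi$-$\lambda$ theorem, two probability measures on $\mathbb{S}(D)$ agreeing on all these sets coincide. It therefore suffices to show that the data $\mathcal{L}_{\mathbb{X}}(f)$, $f \in \mathcal{M}(D)$ nonnegative, pins down the joint law of the vector $\big( N_{\mathbb{X}}(A_1), \dots, N_{\mathbb{X}}(A_k) \big)$ for every finite family of bounded measurable sets.

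To this end, fix such $A_1, \dots, A_k$ and nonnegative reals $t_1, \dots, t_k$ and set $f := \sum_{j=1}^k t_j \, \mathds{1}_{A_j}$. This $f$ is measurable and nonnegative, and its support is contained in the bounded set $\bigcup_{j=1}^k A_j$, so it has compact closure; thus $f \in \mathcal{M}(D)$ and $f$ is admissible in $\mathcal{L}_{\mathbb{X}}$. From the explicit formula $S_f(\mathbb{X}) = \sum_{i \in I} f(X_i)$ in \eqref{welle} we obtain $S_f(\mathbb{X}) = \sum_{j=1}^k t_j \, N_{\mathbb{X}}(A_j)$, and therefore
\[
\mathcal{L}_{\mathbb{X}}(f) = \mathbb{E}\Big[ \exp\Big( - \sum_{j=1}^k t_j \, N_{\mathbb{X}}(A_j) \Big) \Big].
\]
As $(t_1, \dots, t_k)$ ranges over $[0,\infty)^k$, the left-hand side is read off from $\mathcal{L}_{\mathbb{X}}$ and is precisely the multivariate Laplace transform of the $[0,\infty]^k$-valued random vector $\big( N_{\mathbb{X}}(A_1), \dots, N_{\mathbb{X}}(A_k) \big)$.

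It remains to invoke the classical fact that the multivariate Laplace transform of a random vector with values in $[0,\infty]^k$ determines its distribution. A short way to see this: the linear span of the functions $y \mapsto \exp\big( - \sum_{j=1}^k t_j y_j \big)$, $t_1, \dots, t_k \ge 0$, is an algebra of continuous functions on the compact space $[0,\infty]^k$ that contains the constants and separates points, hence is dense in $C([0,\infty]^k)$ by the Stone--Weierstrass theorem; so the integrals of all continuous functions, and with them the law, are determined. The value $+\infty$ requires no separate treatment, since $\exp(-t \cdot \infty) = 0$ for $t > 0$ is built into the compactification. Combining this with the $\pi$-$\lambda$ reduction of the first paragraph proves the lemma.

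The only genuinely delicate point, in my view, is the input used at the start: the identification of the Borel $\sigma$-algebra of the $\tau$-topology with the $\sigma$-algebra generated by the counting functionals, and hence the principle that the law of $\mathbb{X}$ on $\mathbb{S}(D)$ is determined by its finite-dimensional distributions. Everything afterwards is the routine substitution $f = \sum_j t_j \mathds{1}_{A_j}$ together with standard Laplace-transform uniqueness, so I would keep those parts brief and, if needed, cite \cite{JahnelKoenig2018} for the measurability statement.
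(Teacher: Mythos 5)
The paper states Lemma~\ref{laplace_unique} without proof, deferring (as with the surrounding preliminaries) to the lecture notes \cite{JahnelKoenig2018}, so there is no in-paper argument to compare against. Your proof is correct and is the standard argument: reduce via the $\pi$--$\lambda$ theorem to showing that the data $\mathcal{L}_{\mathbb{X}}(f)$ pins down the finite-dimensional laws of $\big(N_{\mathbb{X}}(A_1),\dots,N_{\mathbb{X}}(A_k)\big)$, then plug in $f=\sum_j t_j\mathds{1}_{A_j}$ and invoke uniqueness of the multivariate Laplace transform on the compactification $[0,\infty]^k$ via Stone--Weierstrass. You correctly isolate the one non-elementary input (identification of the Borel $\sigma$-algebra of the $\tau$-topology with the $\sigma$-algebra generated by the counting functionals), which is the right thing to cite rather than reprove. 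One small simplification worth noting: since elements of $\mathbb{S}(D)$ are locally finite, $N_{\mathbb{X}}(A_j)$ is actually $\mathbb{N}_0$-valued (not merely $\mathbb{N}_0\cup\{\infty\}$) for bounded $A_j$, so the substitution $s_j=\mathrm{e}^{-t_j}\in(0,1]$ reduces the final step to uniqueness of probability generating functions of $\mathbb{N}_0^k$-valued vectors, avoiding the compactification altogether; your version is nevertheless perfectly sound.
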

\begin{thm}[Campbell's theorem]\label{Campbell}
Let $\mathbb{X}$ be a point process in $D$ with intensity measure $\mu$ and let $f \colon D \rightarrow \mathbb{R}$ be an integrable function with respect to $\mu$. Then it holds
\begin{equation*}
\mathbb{E} [S_f(\mathbb{X})] =
\int_D f(x) \, \mu(\text{d}x).
\end{equation*}
If $\mathbb{X}$ is even a Poisson point process and the function $f$ is furthermore nonnegative, then it holds
\begin{equation*}
\mathcal{L}_{\mathbb{X}} (f) =
\mathbb{E} [ \mathrm{e}^{-S_f(\mathbb{X})} ] =
\exp \Big( \int_D \Big(\mathrm{e}^{-f(x)} -1 \Big) \mu( \text{d}x) \Big).
\end{equation*}
\end{thm}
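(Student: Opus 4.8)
The plan is the standard measure-theoretic bootstrap: prove both identities first for indicator functions, extend to nonnegative simple functions (by linearity of $f\mapsto S_f$ for the first identity, and by the independence property of the PPP for the second), and then pass to general functions by monotone approximation.

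For the first identity, I would start with $f=\mathds{1}_A$ for a measurable $A\subset D$. Then $S_f(\mathbb{X})=N_{\mathbb{X}}(A)$ directly from the definitions, so $\mathbb{E}[S_f(\mathbb{X})]=\mathbb{E}[N_{\mathbb{X}}(A)]=\mu(A)=\int_D\mathds{1}_A\,\mu(\mathrm{d}x)$, which is exactly the defining property of the intensity measure. By linearity of $f\mapsto S_f$, of the expectation, and of the integral, the identity extends to every nonnegative simple function $f=\sum_{j=1}^k c_j\mathds{1}_{A_j}$. For a general nonnegative measurable $f$ one chooses simple functions $f_n\uparrow f$ pointwise; then $S_{f_n}(\mathbb{X})\uparrow S_f(\mathbb{X})$ almost surely, and two applications of the monotone convergence theorem (one for $\mathbb{E}$, one for the $\mu$-integral) yield $\mathbb{E}[S_f(\mathbb{X})]=\int_D f(x)\,\mu(\mathrm{d}x)$. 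Finally, for $f$ integrable with respect to $\mu$, split $f=f^+-f^-$, apply the nonnegative case to each part (both integrals being finite, so no $\infty-\infty$ issue), and subtract.

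For the second identity, assume $\mathbb{X}$ is a PPP and $f\ge 0$. I would first take $f=c\,\mathds{1}_A$ with $c>0$ and $A$ bounded measurable. Since $N_{\mathbb{X}}(A)$ is Poisson with parameter $\mu(A)$, a direct summation of the Poisson series gives $\mathbb{E}[\mathrm{e}^{-c N_{\mathbb{X}}(A)}]=\mathrm{e}^{-\mu(A)}\sum_{n\ge 0}(\mu(A)\mathrm{e}^{-c})^n/n!=\exp(\mu(A)(\mathrm{e}^{-c}-1))=\exp(\int_D(\mathrm{e}^{-f(x)}-1)\,\mu(\mathrm{d}x))$. For a simple $f=\sum_{j=1}^k c_j\mathds{1}_{A_j}$ with the $A_j$ pairwise disjoint and bounded, the variables $N_{\mathbb{X}}(A_j)$ are independent by the defining property of the PPP, hence $\mathcal{L}_{\mathbb{X}}(f)=\mathbb{E}[\mathrm{e}^{-S_f(\mathbb{X})}]=\prod_{j}\mathbb{E}[\mathrm{e}^{-c_j N_{\mathbb{X}}(A_j)}]=\prod_{j}\exp(\mu(A_j)(\mathrm{e}^{-c_j}-1))=\exp(\int_D(\mathrm{e}^{-f(x)}-1)\,\mu(\mathrm{d}x))$. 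To reach a general nonnegative measurable $f$, approximate it from below by simple functions $f_n\uparrow f$ whose level sets are bounded (e.g.\ dyadic truncations of $f$ restricted to an exhausting sequence of balls). Then $\mathrm{e}^{-S_{f_n}(\mathbb{X})}\downarrow\mathrm{e}^{-S_f(\mathbb{X})}$ with all terms in $[0,1]$, so dominated convergence handles the left-hand side, while on the right $1-\mathrm{e}^{-f_n}\uparrow 1-\mathrm{e}^{-f}$ pointwise, so monotone convergence gives $\int_D(\mathrm{e}^{-f_n(x)}-1)\,\mu(\mathrm{d}x)\downarrow\int_D(\mathrm{e}^{-f(x)}-1)\,\mu(\mathrm{d}x)$; the identity then passes to the limit.

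The indicator and simple-function cases are routine. The only steps that need care are the limiting argument in the second part, where one must track the signs correctly (the integrand $\mathrm{e}^{-f}-1$ is nonpositive, so it is $1-\mathrm{e}^{-f}$ that is monotone increasing along $f_n\uparrow f$), and the requirement that the approximating simple functions be supported on bounded sets so that the PPP independence property is genuinely applicable at each stage. Beyond the definition of the intensity measure and the defining property of the PPP, no further input is needed; in particular the result shows (via Lemma \ref{laplace_unique}) that the stated Laplace transform determines the law of the PPP.
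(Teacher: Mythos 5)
The paper states Campbell's theorem without proof, citing lecture notes as the source, so there is no internal argument to compare against. Your proposal is correct and is the standard measure-theoretic argument: prove both identities for indicators, extend by linearity (resp.\ PPP independence over disjoint bounded sets) to simple functions, then pass to the limit by monotone/dominated convergence. You correctly flag the two places where care is needed — keeping the approximating simple functions supported on bounded sets so the defining independence property of the PPP actually applies, and tracking signs so that it is $1-\mathrm{e}^{-f_n}\uparrow 1-\mathrm{e}^{-f}$ that monotone convergence is applied to — and the final decomposition $f=f^+-f^-$ for the integrable case is handled without an $\infty-\infty$ issue. No gaps.
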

\indent
In the next sections, we consider the Boolean model, which can be seen as a so-called \textit{marked} PPP. 
Therefore, we give in the following some information about the mathematical background.
The idea is, to attach to each point of the point process some individual information. This information may either be static or random. If we interpret the points again as users who want to send and receive messages, the information could for example be the signal strength of emitted messages of the user.
The additional information, which is attached to a point $x_i$ is called a \textit{mark} $m_i$.
The emerging process is called \textit{marked point process} and it is a random point process with values in the set
\begin{equation}\label{jaguar}
\mathbb{S}(\mathcal{M} \times D) =
\{ A \subset \mathcal{M} \times D \colon \# (A \cap B) < \infty \text{ for any bounded set } B \subset \mathcal{M} \times D \},
\end{equation}
where $\mathcal{M} $ denotes the set of marks.
An element of \,$\mathbb{S}(\mathcal{M} \times D)$ is written as $\{(x_i, m_i) \colon i \in I\}$
or as the point measure $\sum\limits_{i \in I} \delta_{(x_i,m_i)}$.
Note that equation (\ref{jaguar}) requires a topological structure of the set of marks $\mathcal{M}$. 
Let us therefore equip it with the corresponding Borel $\sigma$-algebra $\mathcal{B}(\mathcal{M})$ and assume that $\mathcal{M}$ is locally compact.
Then we call $(\mathcal{M},\mathcal{B}(\mathcal{M}))$ the \textit{mark space}.
We would like that the marks are spatially independent, which means, that the mark $m_i$ which is attached to the point $x_i$ does not depend on another mark $m_j$ corresponding to the point $x_j$, where $i \neq j$.
Therefore, we need a so-called \textit{probability kernel}
\begin{equation*}
K \colon D \times \mathcal{M} \rightarrow [0,1].
\end{equation*}
This probability kernel has two features.
First, it holds for any $x \in D$ that the map
$K(x, \cdot) \colon \mathcal{M} \rightarrow [0,1]$
is a probability measure on the mark space $(\mathcal{M},\mathcal{B}(\mathcal{M}))$.
Secondly, $K( \cdot, G)$ is measurable for any $G \in \mathcal{B}(\mathcal{M})$.
Then, $K(x_i, \cdot)$ is the distribution of the mark which is attached to the point $x_i$.
This distribution may depend on the point $x_i$, but not on the index $i$, which means we obtain spatial indepence of the marks.
With this topological structure, we are able to define a marked PPP.
\begin{defi}
Let $\mathbb{X} = (X_i)_{i \in I}$ be a PPP in $D$ with finite intensity measure $\mu$ and let $(\mathcal{M},\mathcal{B}(\mathcal{M}))$ be a measurable space.
Furthermore, let $K$ be a probability kernel from $D$ to $\mathcal{M}$. For the given PPP $\mathbb{X}$, let $(m_i)_{i \in I}$ be an independent collection of $\mathcal{M}$-valued random variables with distribution
$\otimes_{i \in I} K(X_i, \cdot)$.
Then, we call the point process
\begin{equation*}
\mathbb{X}_K = ((X_i, m_i))_{i \in I} \,\,\, \text{   in   } \,\,\, D \times \mathcal{M}
\end{equation*}
a \textit{K-marked Poisson point process (K-MPPP)} or a \textit{K-marking of the PPP $\mathbb{X}$}.
\end{defi}
The Laplace transform of a K-marked PPP $\mathbb{X}_K = ((X_i, m_i))_{i \in I}$ can be expressed via the Laplace transform of the underlying PPP $\mathbb{X}$ as stated in the following lemma.
\begin{lem}\label{mark_laplace}
The Laplace transform of the K-marking $\mathbb{X}_K$ is given by
\begin{equation*}
\mathcal{L}_{\mathbb{X}_K} (g ) =
\mathcal{L}_{\mathbb{X}} (g^*),
\end{equation*}
where the function $g \colon D \times \mathcal{M} \rightarrow [0, \infty)$ is measurable and compactly supported and
\begin{equation*}
g^* (x) = - \log \Big( \, \, \int\limits_{ \mathcal{M}} \mathrm{e}^{-g(x,y)} K(x, \text{d}y) \Big), \,\,\, x \in D.
\end{equation*}
\end{lem}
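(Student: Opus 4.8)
The plan is to unfold the definition of the Laplace transform and to exploit the conditional independence of the marks given the underlying Poisson point process $\mathbb{X}$. By definition,
\begin{equation*}
\mathcal{L}_{\mathbb{X}_K}(g) = \mathbb{E}\Big[ \mathrm{e}^{-\sum_{i \in I} g(X_i, m_i)} \Big].
\end{equation*}
Since the $K$-marking is built on a PPP with finite intensity measure $\mu$, the variable $N_{\mathbb{X}}(D)$ is Poisson distributed with parameter $\mu(D) < \infty$, so almost surely the index set $I$ is finite; in particular every sum and product occurring below is finite and no convergence issues arise. First I would condition on $\mathbb{X}$ and apply the tower property, writing $\mathcal{L}_{\mathbb{X}_K}(g) = \mathbb{E}\big[\, \mathbb{E}[\,\cdot\, \mid \mathbb{X}] \,\big]$.

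Given $\mathbb{X} = (X_i)_{i \in I}$, the marks $(m_i)_{i\in I}$ form, by construction, an independent family with $m_i \sim K(X_i,\cdot)$. Hence the conditional expectation factorizes over $i \in I$:
\begin{equation*}
\mathbb{E}\Big[ \mathrm{e}^{-\sum_{i \in I} g(X_i, m_i)} \,\Big|\, \mathbb{X} \Big]
= \prod_{i \in I} \mathbb{E}\big[ \mathrm{e}^{-g(X_i, m_i)} \,\big|\, \mathbb{X} \big]
= \prod_{i \in I} \int_{\mathcal{M}} \mathrm{e}^{-g(X_i, y)} \, K(X_i, \text{d}y)
= \prod_{i \in I} \mathrm{e}^{-g^*(X_i)}
= \mathrm{e}^{-\sum_{i \in I} g^*(X_i)} .
\end{equation*}
Taking the outer expectation and recalling the definition of the Laplace transform of $\mathbb{X}$ then gives $\mathcal{L}_{\mathbb{X}_K}(g) = \mathbb{E}\big[ \mathrm{e}^{-\sum_{i \in I} g^*(X_i)} \big] = \mathcal{L}_{\mathbb{X}}(g^*)$, which is exactly the claim.

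Two measure-theoretic points must be addressed to make this rigorous, and I expect them to be the only real obstacles. First, one must check that $g^*$ is a legitimate argument for $\mathcal{L}_{\mathbb{X}}$: since $g \ge 0$ we have $\mathrm{e}^{-g(x,y)} \in (0,1]$, and because $K(x,\cdot)$ is a probability measure this yields $\int_{\mathcal{M}} \mathrm{e}^{-g(x,y)} K(x,\text{d}y) \in (0,1]$, so $g^*(x) \in [0,\infty)$ is well defined; it is measurable in $x$ by the measurability of $K(\cdot,G)$ combined with a monotone-class argument for the integrand $\mathrm{e}^{-g}$; and if $g(x,\cdot) \equiv 0$ for $x$ outside a compact set $C$, then $g^* \equiv 0$ outside $C$, so $g^*$ is again measurable and compactly supported. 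Second, the factorization of the conditional expectation together with the identity $\mathbb{E}[\mathrm{e}^{-g(X_i,m_i)} \mid \mathbb{X}] = \int_{\mathcal{M}} \mathrm{e}^{-g(X_i,y)} K(X_i,\text{d}y)$ should be spelled out by conditioning on the (almost surely finite) configuration $\{x_1,\dots,x_n\}$ and using that the joint law of $(m_1,\dots,m_n)$ is the product measure $\bigotimes_{j=1}^n K(x_j,\cdot)$, so that Fubini's theorem applies to the integrand $\prod_{j=1}^n \mathrm{e}^{-g(x_j,y_j)}$. Once these points are settled the displayed computation goes through verbatim.
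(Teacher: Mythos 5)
Your argument is correct and follows exactly the same route as the paper's proof: condition on $\mathbb{X}$, use the tower property and the conditional independence of the marks to factor the conditional expectation, identify each factor with $\mathrm{e}^{-g^*(X_i)}$, and recognize the result as $\mathcal{L}_{\mathbb{X}}(g^*)$. The additional remarks on the measurability and compact support of $g^*$ are sound but go beyond what the paper spells out.
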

\begin{proof}
By the definition of the Laplace transform, we have
\begin{equation}\label{def_lap}
\mathcal{L}_{\mathbb{X}_K} (g )
=
\mathbb{E} \Big[ \mathrm{e} ^{- \sum\limits_{i \in I} g(X_i,m_i)} \Big]
=
\mathbb{E} \Big[ \prod\limits_{i \in I}  \mathrm{e} ^{- g(X_i,m_i)} \Big].
\end{equation}
Now, we apply the so-called tower property of conditional expectations and use the independence over i. This yields
\begin{equation}\label{tower}
\mathbb{E} \Big[ \prod\limits_{i \in I}  \mathrm{e} ^{- g(X_i,m_i)} \Big]
=
\mathbb{E} \Big[ \mathbb{E} \Big[ \prod\limits_{i \in I}  \mathrm{e} ^{- g(X_i,m_i)} \Big\vert \mathbb{X} \Big]\Big]
=
\mathbb{E} \Big[  \prod\limits_{i \in I} \mathbb{E} \Big[ \mathrm{e} ^{- g(X_i,m_i)} \Big\vert \mathbb{X} \Big]\Big].
\end{equation}
Note that for any $i \in I$ we have by definition
\begin{equation*}
\mathbb{E} \Big[ \mathrm{e} ^{- g(X_i,m_i)} \Big\vert \mathbb{X} \Big]
=
\int\limits_{\mathcal{M}} \mathrm{e}^{-g(X_i,y)} K(X_i, \text{d}y)
=
\mathrm{e}^{-g^*(X_i)}.
\end{equation*}
Inserting this in equation (\ref{tower}) and using again the definition of the Laplace transform yields the following equation
\begin{equation*}
\mathbb{E} \Big[ \prod\limits_{i \in I}  \mathrm{e} ^{- g(X_i,m_i)} \Big]
=
\mathbb{E} \Big[ \prod\limits_{i \in I} \mathrm{e}^{-g^*(X_i)} \Big]
=
\mathbb{E} \Big[ \mathrm{e}^{-\sum\limits_{i \in I}g^*(X_i)} \Big]
=
\mathcal{L}_{\mathbb{X}} (g^*).
\end{equation*}
Together with equation (\ref{def_lap}) we have shown the assertion, i.e., that  
$\mathcal{L}_{\mathbb{X}_K} (g ) = \mathcal{L}_{\mathbb{X}} (g^*)$.
\\
\end{proof}
As we assumed that the mark space $\mathcal{M}$ is a locally compact topological space, the previous lemma implies that a K-MPPP 
$\mathbb{X}_K$ is equal to a usual PPP on $D \times \mathcal{M}$ with intensity measure $\mu \otimes K$.
This is stated in the following theorem.
\newpage
\begin{thm}[Marking Theorem]\label{mark_thm}
Let $\mathbb{X}_K$ be a K-marked Poisson point process and assume that the set of marks $\mathcal{M}$ is locally compact and equipped with the Borel $\sigma$-algebra $\mathcal{B}(\mathcal{M})$. Then the point process $\mathbb{X}_K$ is in distribution equal to the Poisson point process on $D \times \mathcal{M}$ with intensity measure $\mu \otimes K$.
\end{thm}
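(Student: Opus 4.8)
The plan is to identify the two processes through their Laplace transforms, using only the tools already assembled in Section~2.1: Lemma~\ref{laplace_unique}, Lemma~\ref{mark_laplace} and Campbell's theorem (Theorem~\ref{Campbell}). First I would observe that, since $\mathcal{M}$ is locally compact, the product $D\times\mathcal{M}$ is again a space of the type for which the $\tau$-topology, the state space $\mathbb{S}(D\times\mathcal{M})$ and the uniqueness statement of Lemma~\ref{laplace_unique} make sense, so it is meaningful to speak of a PPP $\Pi$ on $D\times\mathcal{M}$ with intensity measure $\mu\otimes K$. Moreover $\mu\otimes K$ is here even a \emph{finite} measure, because $(\mu\otimes K)(D\times\mathcal{M})=\int_D K(x,\mathcal{M})\,\mu(\text{d}x)=\mu(D)<\infty$ by the assumption (in the definition of a K-MPPP) that $\mu$ is finite and that each $K(x,\cdot)$ is a probability measure; in particular $\Pi$ exists and Campbell's theorem applies to it. By Lemma~\ref{laplace_unique} it then suffices to show $\mathcal{L}_{\mathbb{X}_K}(g)=\mathcal{L}_{\Pi}(g)$ for every measurable, nonnegative, compactly supported $g\colon D\times\mathcal{M}\to[0,\infty)$.

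Next I would evaluate the left-hand side. By Lemma~\ref{mark_laplace}, $\mathcal{L}_{\mathbb{X}_K}(g)=\mathcal{L}_{\mathbb{X}}(g^*)$ with $g^*(x)=-\log\int_{\mathcal{M}}\mathrm{e}^{-g(x,y)}K(x,\text{d}y)$. One checks that $g^*$ is an admissible test function for $\mathbb{X}$: it is nonnegative since the integral defining it is at most $K(x,\mathcal{M})=1$; it is measurable because $K$ is a probability kernel and $g$ is measurable; and it is compactly supported, since if $g$ vanishes outside a compact $B\subset D\times\mathcal{M}$ then $g^*$ vanishes outside the projection of $B$ onto $D$. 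Hence Campbell's theorem (the Poisson part) applies and gives
\[
\mathcal{L}_{\mathbb{X}}(g^*)=\exp\Big(\int_D\big(\mathrm{e}^{-g^*(x)}-1\big)\,\mu(\text{d}x)\Big)=\exp\Big(\int_D\Big(\int_{\mathcal{M}}\mathrm{e}^{-g(x,y)}K(x,\text{d}y)-1\Big)\mu(\text{d}x)\Big).
\]

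Finally I would rewrite the exponent. Using $1=\int_{\mathcal{M}}K(x,\text{d}y)$ and Tonelli's theorem (the integrand $\mathrm{e}^{-g}-1$ is bounded and $\mu\otimes K$ is finite, so the interchange is harmless), the exponent equals
\[
\int_D\int_{\mathcal{M}}\big(\mathrm{e}^{-g(x,y)}-1\big)K(x,\text{d}y)\,\mu(\text{d}x)=\int_{D\times\mathcal{M}}\big(\mathrm{e}^{-g(x,y)}-1\big)\,(\mu\otimes K)(\text{d}(x,y)),
\]
and by Campbell's theorem applied to $\Pi$ with the nonnegative compactly supported function $g$, the corresponding exponential is exactly $\mathcal{L}_{\Pi}(g)$. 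Chaining the two displays yields $\mathcal{L}_{\mathbb{X}_K}(g)=\mathcal{L}_{\Pi}(g)$ for all such $g$, and Lemma~\ref{laplace_unique} then gives $\mathbb{X}_K\overset{\text{d}}{=}\Pi$, which is the assertion. I do not expect a genuine obstacle here: the work is purely bookkeeping — confirming that $\mu\otimes K$ is a (finite, hence Radon) measure so $\Pi$ is well defined, that $g^*$ lies in the admissible class, and that the double integral may be swapped. The one substantive point, turning the mark-averaging map $g\mapsto g^*$ into integration against the product intensity, is precisely the exponential identity in Campbell's theorem, so no idea beyond Section~2.1 is needed.
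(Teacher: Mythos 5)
Your proof is correct and follows essentially the same route as the paper: reduce to Laplace transforms via Lemma~\ref{laplace_unique}, pass to $\mathcal{L}_{\mathbb{X}}(g^*)$ by Lemma~\ref{mark_laplace}, apply Campbell's theorem twice, and bridge the two with the normalization $\int_{\mathcal{M}}K(x,\text{d}y)=1$. The only difference is that you make explicit some bookkeeping the paper leaves implicit (finiteness of $\mu\otimes K$, admissibility of $g^*$, Tonelli), which is fine but not a different argument.
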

\begin{proof}
In the following we write $\tilde{\mathbb{X}}$ for the PPP on $D \times \mathcal{M}$ with intensity measure $\mu \otimes K$.
As stated in Lemma \ref{laplace_unique} it holds that the distribution of a point process in uniquely determined by its Laplace transform.
Hence, it suffices to show that the Laplace transform of the PPP $\tilde{\mathbb{X}}$ is equal to the one of the K-marked PPP $\mathbb{X}_K$.
\medskip
\\ \indent
From Lemma \ref{mark_laplace} we know that the Laplace transform of $\mathbb{X}_K$ is given by
\begin{equation*}
\mathcal{L}_{\mathbb{X}_K} (g ) =
\mathcal{L}_{\mathbb{X}} ( g^*),
\end{equation*}
where
\begin{equation*}
g^* (x) = - \log \Big( \, \, \int\limits_{ \mathcal{M}} \mathrm{e}^{-g(x,y)} K(x, \text{d}y) \Big) \text{ for }x \in D. 
\end{equation*}
Now we apply the Campbell's theorem \ref{Campbell} for the Poisson point process $\mathbb{X}$ and the nonnegative function $g^*$. Together with the definition of $g^*$ we obtain
\begin{align*}
\mathcal{L}_{\mathbb{X}} ( g^*)
=
\exp \Big( \, \int\limits_{D} \Big( \mathrm{e}^{-g^*(x)}-1 \Big) \mu(\text{d}x)\Big) 
=
\exp \Big( \, \int\limits_{D} \Big( \,\, \int\limits_{ \mathcal{M}} \mathrm{e}^{-g(x,y)} K(x, \text{d}y)-1 \Big) \mu(\text{d}x)\Big).
\end{align*}
As $K$ is a probability kernel, it is a probability measure on the mark space. Hence, we know that
\begin{equation*}
\int\limits_{\mathcal{M}} K(x, \text{d} y) =1.
\end{equation*}
Plugging this into the above equation yields
\begin{align*}
\exp \Big( \, \int\limits_{D} \Big(\,\, \int\limits_{ \mathcal{M}} \mathrm{e}^{-g(x,y)} K(x, \text{d}y)-1 \Big) \mu(\text{d}x)\Big)
&=
\exp \Big( \int\limits_{D} \int\limits_{ \mathcal{M}} \Big( \mathrm{e}^{-g(x,y)} -1 \Big) K(x, \text{d}y)\mu(\text{d}x)\Big)\\
&=
\exp \Big( \,\, \int\limits_{D \times \mathcal{M}} \Big( \mathrm{e}^{-g} -1 \Big) \text{d} (\mu \otimes K)\Big).
\end{align*}
We apply again the Campbell's theorem \ref{Campbell}, but this time for the PPP $\tilde{\mathbb{X}}$ and the nonnegative function $g$ and obtain
\begin{equation*}
\exp \Big( \,\, \int\limits_{D \times \mathcal{M}} \Big( \mathrm{e}^{-g} -1 \Big) \text{d} (\mu \otimes K)\Big)
=
\mathcal{L}_{\tilde{\mathbb{X}}} (g).
\end{equation*}
All together, we have $\mathcal{L}_{\mathbb{X}_K}( g) = \mathcal{L}_{\tilde{\mathbb{X}}} (g)$ and hence the assertion is shown.\\
\end{proof}
Note that the measure $\mu \otimes K$ of the marking theorem is defined by
\begin{equation*}
\mu \otimes K(B) =
\int\limits_{\tilde{B}} \mu ( \text{d} x) K(x, \hat{B}_x),
\end{equation*}
where 
$B \subset D \times \mathcal{M}$, 
$\tilde{B} := \{ x \in D \colon \exists \, y \in \mathcal{M} \colon (x,y) \in B\}$
and
$\hat{B}_x := \{y \in \mathcal{M} \colon (x,y) \in B\}$.
\subsection{The Boolean model}
\subsubsection{Model definition}
In this section we describe the Boolean model. 
The following information can be found in \cite[page 23ff.]{JahnelKoenig2018}. 
\medskip
\\ \indent
Let $X^\lambda = \{ X_i \colon i \in I\}$ be a homogeneous PPP in $\mathbb{R}^d$ with intensity $\lambda >0$ and $d \geq 2$. The points represent the users (or devices) in the spatial telecommunication system. 
Each user $X_i$ has a so-called \textit{local communication zone} 
$X_i + \Xi_i$, which describes the area around the user, within which she can send messages to other users. 
Thereby, $\Xi_i \subset \mathbb{R}^d$, $i \in I$, are independent random closed sets that do not depend on the users $X_i$ to which they are attached. The \textit{Boolean model} is then defined as the union of all local communication zones of the Poisson points, which is the random set
\begin{equation*}
\Xi_{BM} = \bigcup\limits_{i \in I} (X_i + \Xi_i).
\end{equation*}
A typical choice for the sets $\Xi_i$ are centered, closed balls. 
The radius of the balls can either be deterministic or random. 
In the next Section 2.2.2, we consider the first case, which
was introduced in 1961 by Gilbert in \cite{Gilbert1961}.
Thus, we set $\Xi_i = B_R(X_i)$ for all $i \in I$, where $R \in (0, \infty)$ is the fixed radius of the balls. 
The corresponding Boolean model is then given by
\begin{equation}\label{bool_const}
\Xi_{BM} = \bigcup\limits_{i \in I} (X_i + B_R(X_i)).
\end{equation}
Then, two users $X_i$ and $X_j$ are connected if their distance is smaller than $R$. The resulting graph is called \textit{Gilbert's graph} and is denoted by
$g_R(X^\lambda)$. Two users are able to communicate along the edges of this graph. 
\medskip
\\ \indent
Afterwards, we consider in Section 2.2.3 the more complex case, where the radius of the balls is random. \medskip
\\ \indent
In both cases, we aim to study under which conditions the Boolean model $\Xi_{BM}$ contains unboundedly large components. If this is the case, we say that the model \textit{percolates} or \textit{percolation occurs}.
As we are in the continuous space $\mathbb{R}^d$, this declaration explains the name of the theory, which is "continuum percolation".
\newpage
\subsubsection{Constant radii}
In this section, we consider the Boolean model with constant radii which we introduced in equation (\ref{bool_const}).
Let us first explain some notations. 
The set $\mathcal{C}_R(x)$ denotes the connected component of $\Xi_{BM}$ that contains the point $x \in \mathbb{R}^d$.  Note that sometimes the expression \textit{cluster} is used as a synonym for 'connected component'. A very important quantity is the \textit{percolation probability}
\begin{equation*}
\theta (\lambda, R) =
\mathbb{P}( \text{Leb}(\mathcal{C}_R( o)) = \infty).
\end{equation*}
As we consider a homogeneous PPP, it holds
\begin{equation*}
\theta (\lambda, R) = \theta (\lambda R^d)
\text{ for } \lambda, R \in (0, \infty),
\end{equation*}
where we use the notation 
$\theta ( \lambda) = \theta (\lambda, 1)$. This means that the percolation probability only depends on the value of $\lambda R^d$ and we do not loose any information when we set $R=1$ in the following. Hence, it suffices to consider $\theta(\lambda)$ when we study continuum percolation in our Boolean model with fixed radii.
Note that the function $\theta(\lambda)$ is increasing in the density $\lambda$. In other words, the larger the density of the Poisson points is, the larger is the probability that there exists an infinite connected component. Hence, we can define the \textit{critical density} as follows
\begin{equation*}
\lambda_{\text{cr}} = \inf \{ \lambda \in (0, \infty) \colon \theta (\lambda) >0 \}
= \sup\{ \lambda \in (0, \infty) \colon \theta (\lambda) =0 \}
\in [0, \infty],
\end{equation*}
where $\sup \emptyset =0$ and $\inf \emptyset = \infty$. 
One of the most important results in the field of continuum percolation is the following theorem, which states that the critical threshold is positive and finite.
\begin{thm}\label{karamel}
For any $d \in \mathbb{N} \setminus \{1 \}$, it holds 
$\lambda_{\mathrm{cr}} \in (0, \infty)$.
\end{thm}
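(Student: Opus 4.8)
The plan is to prove the two bounds $\lambda_{\mathrm{cr}} > 0$ and $\lambda_{\mathrm{cr}} < \infty$ separately, in both cases by comparison with discrete (independent) bond or site percolation on $\mathbb{Z}^d$, whose critical probability is known to lie strictly between $0$ and $1$. Throughout I work with the normalization $R=1$, which is justified by the scaling relation $\theta(\lambda,R)=\theta(\lambda R^d)$ stated above, and I use the Gilbert graph $g_1(X^\lambda)$: percolation of $\Xi_{BM}$ is equivalent to the existence of an infinite connected component in this graph.

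\textbf{Finiteness of $\lambda_{\mathrm{cr}}$ (existence of a supercritical phase).} First I would tile $\mathbb{R}^d$ by cubes of a small side length $a>0$ chosen so that any two points lying in a pair of adjacent cubes are at Euclidean distance less than $1$; taking $a = 1/(2\sqrt d)$ works, since the diameter of the union of two face-adjacent cubes is then at most $a\sqrt d \cdot 2 < 1$. Declare a cube \emph{occupied} if it contains at least one Poisson point. By the defining property of the PPP, the events $\{$cube $z$ is occupied$\}$ for $z\in\mathbb{Z}^d$ are independent, each with probability $1-e^{-\lambda a^d}$, which tends to $1$ as $\lambda\to\infty$. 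Choosing $\lambda$ large enough that this probability exceeds the critical value $p_{\mathrm c}^{\mathrm{site}}(\mathbb{Z}^d)<1$ for independent site percolation, there is almost surely an infinite cluster of occupied cubes; picking one Poisson point in each such cube yields an infinite connected component in $g_1(X^\lambda)$, because adjacent occupied cubes contribute points within distance $1$. Hence $\theta(\lambda)>0$ for $\lambda$ large, so $\lambda_{\mathrm{cr}}<\infty$.

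\textbf{Positivity of $\lambda_{\mathrm{cr}}$ (existence of a subcritical phase).} Here I would argue that for small $\lambda$ the expected cluster size at the origin is finite, which precludes percolation. Add a point at the origin (using a Palm-type / Mecke argument, or simply work with $X^\lambda\cup\{o\}$) and bound $\mathbb{E}[\#\mathcal C]$, the expected number of Poisson points in the component of $o$, by a branching/path-counting argument: the expected number of points within distance $1$ of a given point is $\lambda\,\omega_d$ with $\omega_d$ the volume of the unit ball, so the expected number of self-avoiding paths of length $n$ starting at $o$ in $g_1$ is at most $(\lambda\omega_d)^n$ by iterated application of Campbell's theorem (Theorem~\ref{Campbell}). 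Summing over $n$ gives $\mathbb{E}[\#\mathcal C]\le \sum_{n\ge 0}(\lambda\omega_d)^n<\infty$ whenever $\lambda<1/\omega_d$, and a finite expected cluster size forces $\mathbb{P}(\#\mathcal C=\infty)=0$, i.e. $\theta(\lambda)=0$. Therefore $\lambda_{\mathrm{cr}}\ge 1/\omega_d>0$.

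\textbf{Main obstacle.} The discrete comparison in the supercritical direction is essentially routine once the block size is fixed. The subcritical direction is the more delicate part: one must handle carefully the "add a point at the origin" step (the correct object is the reduced Palm distribution, for which the Mecke equation gives exactly that $X^\lambda$ seen from a typical point is $X^\lambda$ with an added point at that location), and one must make rigorous the claim that the expected number of length-$n$ paths is at most $(\lambda\omega_d)^n$ — this is a multivariate Mecke / Campbell computation where the integrand is an indicator that consecutive points are within distance $1$, and each successive integration contributes a factor $\lambda\omega_d$. I would present this path-counting estimate in full detail, since it is where the constant $1/\omega_d$ enters and where the independence properties of the PPP are genuinely used, and then invoke the elementary fact that $\mathbb{E}[\#\mathcal C]<\infty \Rightarrow \theta(\lambda)=0$ to conclude.
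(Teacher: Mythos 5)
Your proof is correct, and it takes a genuinely different route from the paper's. The paper proves the theorem only for $d=2$, via a single renormalization scheme: tile $\mathbb{R}^2$ by hexagons of side length $s$, declare a hexagon open if it contains a Poisson point, and compare in \emph{both} directions to site percolation on the triangular lattice $\mathcal{T}_s$ (Theorem~\ref{baerenkind}, $\tilde p_{\mathrm{cr}}=1/2$), taking $s>1$ for subcriticality and $s<1/\sqrt{13}$ for supercriticality; this yields the two-sided numerical bracket $\lambda_{\mathrm{cr}}\in[2\log 2/(3\sqrt3),\,26\log 2/(3\sqrt3)]$. Your supercritical argument is the same idea in different clothing — a cube tiling of $\mathbb{R}^d$ compared to independent site percolation on $\mathbb{Z}^d$ — and has the advantage of working for every $d\geq 2$ in one stroke, at the cost of not yielding a sharp constant. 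Your subcritical argument, however, is a genuinely different method: a first-moment/path-counting bound via the multivariate Mecke formula, giving the explicit bound $\lambda_{\mathrm{cr}}\geq 1/\omega_d$ without any discretization, again valid for all $d\geq 2$. What you gain is generality in the dimension and a one-line constant in the subcritical direction; what the paper's symmetric blocking gains is a uniform proof architecture and an upper as well as lower numerical bound in $d=2$. One small point of care that you already flag: the paper's percolation function is $\theta(\lambda)=\mathbb{P}(\mathrm{Leb}(\mathcal C_R(o))=\infty)$ for the component of $\Xi_{BM}$ containing the deterministic origin $o$, not for a Palm point; so when you "add a point at the origin," the cleanest phrasing is to note that $\mathrm{Leb}(\mathcal C_R(o))=\infty$ forces the existence of some $X_i$ with $|X_i|\leq 1$ whose Gilbert cluster is infinite, and then apply the path-counting bound starting from that $X_i$ (or equivalently bound the expected number of Poisson points connected to $o$ in $g_1(X^\lambda\cup\{o\})$, which dominates). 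With that clarification the argument is complete.
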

Thus, the theorem states, that for sufficiently small densities, the percolation probability is zero, while for sufficiently large one, it is positive. More precisely, there occurs a phase transition between a so-called \textit{subcritical regime}, where no percolation occurs almost surely, and a \textit{supercritical regime}, where percolation does occur with a positive probability. 
\medskip
\\ \indent
Before we show the proof of Theorem \ref{karamel}, we first need to take a step back and consider the less complex model of discrete bond percolation.
The reason is that it is often very useful to map the continuous percolation problem to a discrete one in order to proof percolation in the continuous space. This is as well done in the case of the previous theorem and hence we first introduce the basic model of the theory of discrete bond percolation. Consider therefore the $d$-dimensional integer lattice $\mathbb{Z}^d$ and put on every edge $e$ between two neighbouring sites in $\mathbb{Z}^d$ a Bernoulli random variable $\zeta_e \in \{0,1\}$ with parameter $p\in [0,1]$. That means
$\mathbb{P}(\zeta_e=1)=p$ and $\mathbb{P}(\zeta_e=0)=1-p$.
Furthermore we assume that the random variables $(\zeta_e)_e$, which form a Bernoulli field, are independent and identically distributed.
Then we call an edge $e$ \textit{open} if $\zeta_e=1$ and \textit{closed} otherwise.
Two sites in $\mathbb{Z}^d$ are called \textit{connected}, if there is a sequence of open edges that form a path between the two sites. The connected component that contains $x \in \mathbb{Z}^d$, is denoted by $C(x)$. 
Note that our model only consists of one parameter $p$, which expresses the probability that an edge is open. Hence, we write 
$\mathbb{P}_p$ and $\mathbb{E}_p$ for the probability and the expectation in this model.
The percolation probability is then given by
\begin{equation*}
\theta(p) = \mathbb{P}_p(\#C(o)=\infty),
\end{equation*}
where $o$ denotes the origin of $\mathbb{R}^d$,
and the critical percolation probability is
\begin{equation*}
p_{\text{cr}}=\inf\{p \in [0,1] \colon \theta(p)>0\}.
\end{equation*}
The following theorem states that the critical threshold in our discrete model is positive and finite. 
\begin{thm}\label{haselnuss}
For any $d \in \mathbb{N} \setminus \{1\}$, it holds $p_{\mathrm{cr}} \in (0,1)$.
\end{thm}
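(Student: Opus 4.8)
The plan is to establish the two inequalities $p_{\mathrm{cr}}>0$ and $p_{\mathrm{cr}}<1$ separately, each by a first-moment (Peierls-type) count.

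For $p_{\mathrm{cr}}>0$ I would argue as follows. If the origin lies in an infinite open cluster, then for every $n\in\mathbb{N}$ there exists an open self-avoiding path of length $n$ starting at $o$; write $N_n$ for the number of such paths. The number of self-avoiding paths of length $n$ from $o$ in $\mathbb{Z}^d$ is at most $2d(2d-1)^{n-1}$, and by independence of the $(\zeta_e)_e$ each fixed path is open with probability $p^n$, so $\mathbb{E}_p[N_n]\le 2d(2d-1)^{n-1}p^n$. For $p<1/(2d-1)$ this bound tends to $0$, hence $\mathbb{P}_p(N_n\ge 1)\to 0$ by Markov's inequality, and therefore $\theta(p)=\mathbb{P}_p(\#C(o)=\infty)\le\lim_n\mathbb{P}_p(N_n\ge1)=0$. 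This gives $p_{\mathrm{cr}}\ge 1/(2d-1)>0$.

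For $p_{\mathrm{cr}}<1$ it suffices to treat $d=2$: the copy $\mathbb{Z}^2\times\{0\}^{d-2}$ sits inside $\mathbb{Z}^d$ as an induced subgraph, so any infinite open cluster in the two-dimensional model is also one in the $d$-dimensional model, whence $\theta(p)$ in dimension $d$ dominates $\theta(p)$ in dimension $2$ and $p_{\mathrm{cr}}$ is non-increasing in $d$. In dimension $2$ I would invoke planar duality: form the dual lattice by putting one dual edge across each primal edge and declaring it open (resp.\ closed) according to its primal edge. The topological input is that $\#C(o)<\infty$ if and only if $o$ is surrounded by a closed circuit in the dual lattice. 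The number of dual circuits of length $n$ surrounding $o$ is at most $n\,3^{n-1}$ (such a circuit crosses the positive horizontal axis at horizontal distance at most $n/2$, which fixes one of at most $n$ starting edges, and each subsequent step offers at most $3$ choices), and each such circuit is closed with probability $(1-p)^n$. Hence the expected number $M$ of closed dual circuits surrounding $o$ obeys $\mathbb{E}_p[M]\le\sum_{n\ge 4}n\,3^{n-1}(1-p)^n$, which is finite and strictly below $1$ once $1-p$ is small enough, say for all $p>p_0$ with a suitable $p_0<1$. Then $\mathbb{P}_p(M=0)>0$, i.e.\ $\theta(p)>0$, so $p_{\mathrm{cr}}\le p_0<1$.

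I expect the main obstacle to be the planar-duality step: setting up the dual lattice cleanly, proving rigorously the equivalence \emph{the cluster of $o$ is finite iff $o$ is enclosed by a closed dual circuit}, and carrying out the circuit count. The self-avoiding-path bound used for $p_{\mathrm{cr}}>0$ and the dimension-monotonicity reduction are routine by comparison.
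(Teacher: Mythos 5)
Your proposal is correct and follows essentially the same two-step strategy as the paper: a first-moment self-avoiding-path count to show $p_{\mathrm{cr}}>0$, and a Peierls-type dual-circuit count in $\mathbb{Z}^2$ (after the standard dimension-monotonicity reduction) to show $p_{\mathrm{cr}}<1$. The only differences are cosmetic — you use the slightly sharper path bound $2d(2d-1)^{n-1}$ in place of $(2d)^n$ and parametrize dual circuits by their total length rather than by the crossing point on the positive axis — but the underlying arguments coincide.
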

\begin{proof}
First, we prove the existence of a subcritical phase, i.e., that 
\begin{equation*}
p_{\text{cr}} = \inf \{p \in [0,1] \colon \mathbb{P}_p(\#C(o)=\infty)>0\} > 0.
\end{equation*}
Therefore, we denote by $\Psi_n$ the set of self-avoiding $n$-step paths starting at the origin. 
It clearly holds for any $n \in \mathbb{N}$ that if the origin is contained in an infinite connected component, then there exists an $n$-step self-avoiding path of open edges, which starts at the origin.
Hence, we have
\begin{align*}
\mathbb{P}_p(\#C(o)=\infty)
&\leq
\mathbb{P}_p (\text{ there exists } \eta \in \Psi_n \text{ such that } \zeta_e=1 \text{ for all } e \in \eta)\\
&\leq
\sum\limits_{ \eta \in \Psi_n} \mathbb{P}_p (\zeta_e=1 \text{ for all } e \in \eta)\\
&=
\sum\limits_{ \eta \in \Psi_n} p^n
=
|\Psi_n| \cdot p^n
\leq
(2dp)^n.
\end{align*}
For $p \in \big(0, \frac{1}{2d}\big)$, it holds that the expression $(2dp)^n$ tends to zero as $n \rightarrow \infty$, which implies
$\mathbb{P}_p(\#C(o)=\infty) = 0$. This yields that the infimum of the set 
$\{p \in [0,1] \colon \mathbb{P}_p(\#C(o)=\infty)>0\}$
has to be either equal to or larger than the value $1/(2d)$. Hence, it holds $p_{\text{cr}} \geq 1/(2d) >0$, which proves the existence of a subcritical phase. \medskip
\\ \indent
In the next step, we prove the existence of a supercritical phase, i.e., we show that $p_{\text{cr}} < 1$.
As we consider a discrete model, there does clearly exist an infinite connected component in dimension $d > 2$ if there exists one in dimension $d=2$.
Hence, it suffices to prove percolation in $\mathbb{Z}^2$.
Thereby we use the strategy of the famous \textit{Peierls argument}.
\medskip
\\ \indent
Let us consider the shifted lattice $\mathbb{Z}_*^2 = \mathbb{Z}^2 + (1/2,1/2)$.
For any edge $e^*$ in $\mathbb{Z}_*^2$, we say that it is \textit{open} if the unique edge $e$ in our original lattice $\mathbb{Z}^2$, which crosses $e^*$, is closed.
Analogously, an edge $e^*$ in $\mathbb{Z}_*^2$ is called \textit{closed}, if the unique crossing edge $e$ in $\mathbb{Z}^2$ is open.
Note that by this definition, the probability that an edge $e^*$ in the shifted lattice $\mathbb{Z}_*^2$ is open is given by $1-p$ and the probability that it is closed is given by $p$.
Moreover, it follows, that if the origin is not contained in an infinite connected component, there must exist an interface of open edges in $\mathbb{Z}_*^2$ which surrounds the origin and contains the point 
$(n+1/2, 1/2)$ for some $n \in \mathbb{N}$. 
This again implies that there exists a path of $2n+4$ open edges in $\mathbb{Z}_*^2$, which is passing the point $(n+1/2, 1/2)$ for some $n \in \mathbb{N}$. 
Hence, we have
\begin{align*}
1- \theta (p)
&\leq
\sum\limits_{n \in \mathbb{N}} \mathbb{P}_p ( \text{there exists a path of } 2n+4 \text{ open edges in }\mathbb{Z}_*^2 \text{, passing } (n + 1/2, 1/2 ) )
\\
&\leq 
\sum\limits_{n \in \mathbb{N}} (4 \cdot (1-p))^{2n+4}.
\end{align*}
Thereby, the factor $4^{2n+4}$ is the number of paths of length $2n+4$ and 
$(1-p)^{2n+4}$ is the probability that $2n+4$ edges in the shifted lattice are open.
By a simple application of the geometric series, it follows that for $p$ close to $1$, the sum above becomes smaller than $1$. This yields
$1- \theta (p) < 1$ or equivalently $\theta (p) >0$. Hence, we have shown that there exist parameters $p<1$, such that the percolation probability is positive, i.e., $p_{\text{cr}} <1$, what was to be shown.\\
\end{proof}
Furthermore, one can prove that for any $p>p_{\text{cr}}$ an infinite cluster exists almost surely. That means for all parameters $p \in [0,p_{\text{cr}})$ the model does not percolate and for $p \in (p_{\text{cr}},1]$ percolation occurs. \medskip
\\ \indent
In the following we show how the existence of the phase transition in the discrete phase can be used to prove the phase transition in the continuous space. Thereby we do not apply the above Theorem \ref{haselnuss}, but the following version for site percolation on the triangular grid in two dimensions. Hence, we show in this thesis only the phase transition in $\mathbb{R}^2$.
Let $\tilde{p}_{\text{cr}} \in [0,1]$ denote the critical percolation probability of a triangular grid.
\begin{thm}\label{baerenkind}
For $d=2$, it holds $\tilde{p}_{\mathrm{cr}} =1/2$.
\end{thm}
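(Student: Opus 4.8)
The plan is to prove the two inequalities $\tilde p_{\mathrm{cr}} \ge 1/2$ and $\tilde p_{\mathrm{cr}} \le 1/2$ separately, both resting on the \emph{self-matching} property of the triangular grid for site percolation. Colouring each site \emph{open} with probability $p$ and \emph{closed} with probability $1-p$ independently, the key combinatorial fact I would first record is: for a parallelogram-shaped piece of the grid there is an open crossing between one pair of opposite sides \emph{if and only if} there is no closed crossing between the other pair of opposite sides, where in both statements ``crossing'' means a path through adjacent sites of the \emph{same} triangular grid (this uses that every bounded face of the grid is a triangle, and it is the triangular-grid analogue of the $\mathbb{Z}^2$ bond-percolation duality picture). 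The value $1/2$ then drops out by symmetry: at $p=1/2$ the involution swapping open and closed sites preserves $\mathbb{P}_{1/2}$, so for a rhombus possessing a lattice symmetry that exchanges its two crossing directions, the probability of an open crossing of it equals the probability of a closed crossing of it; by the duality these two probabilities sum to $1$, hence each equals $1/2$.

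For the lower bound $\tilde p_{\mathrm{cr}} \ge 1/2$ I would argue by contradiction. If $\tilde p_{\mathrm{cr}} < 1/2$, then at $p = 1/2$ there is almost surely an infinite open cluster, and by the open/closed symmetry of $\mathbb{P}_{1/2}$ also almost surely an infinite \emph{closed} cluster. But in a planar lattice an infinite open cluster and an infinite closed cluster cannot coexist: a planar non-coexistence argument in the spirit of Harris --- using the FKG inequality and, in modern form, the almost-sure uniqueness of the infinite cluster (Burton--Keane, which relies on translation invariance, ergodicity and the amenability of the grid) together with a topological separation argument --- forces one of the two events to have probability zero. This contradiction gives $\mathbb{P}_{1/2}(\#C(o)=\infty)=0$, i.e. $\tilde p_{\mathrm{cr}} \ge 1/2$.

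For the upper bound $\tilde p_{\mathrm{cr}} \le 1/2$ I would run a Russo--Seymour--Welsh (RSW) argument. Starting from the exact value $1/2$ for the crossing probability of a square (rhombus) at $p=1/2$, the RSW construction yields a lower bound, uniform in $n$, on the $\mathbb{P}_{1/2}$-probability of an open crossing in the long direction of a $3n \times n$ rectangle; gluing four such crossings via the FKG inequality gives a uniform lower bound $\delta>0$ on the probability of an open circuit around the origin in a square annulus at scale $n$. To pass from this to percolation for every $p>1/2$, I would invoke Russo's formula and monotonicity: for fixed $p>1/2$ the crossing probabilities of $3n\times n$ rectangles tend to $1$, and fast enough that, placing alternately horizontal and vertical such rectangles at a sequence of growing scales so that consecutive rectangles overlap, the FKG inequality shows that with positive probability every rectangle beyond some scale is crossed; the union of these overlapping crossings contains an infinite open cluster, so $\mathbb{P}_p(\#C(o)=\infty)>0$. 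Hence $\tilde p_{\mathrm{cr}} \le 1/2$, which together with the lower bound yields $\tilde p_{\mathrm{cr}} = 1/2$.

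The main obstacle is the upper bound: the RSW crossing lemma, and the subsequent estimate that crossing probabilities tend to $1$ for $p$ strictly above criticality, form the genuinely hard technical core (this is essentially Kesten's theorem specialised to the triangular grid), whereas the self-matching duality, the $p=1/2$ symmetry, and the non-coexistence argument for the lower bound are comparatively standard. If a fully self-contained treatment is too long for the thesis, a reasonable alternative is to carry out the duality-and-symmetry reduction in detail and to cite the classical RSW/Kesten results for the two inequalities.
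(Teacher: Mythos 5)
The paper does not prove Theorem \ref{baerenkind}: it is stated without argument and used as a black box in the proof of Theorem \ref{karamel}, being the classical Harris--Kesten result $p_{\mathrm{c}}^{\text{site}}(\text{triangular})=1/2$. Your sketch is a correct outline of the standard proof of that result, and the ingredients you list are exactly the textbook ones: the self-matching property of the triangular lattice giving the open-crossing/closed-crossing duality, the colour-swap symmetry at $p=1/2$ forcing a symmetric rhombus to have crossing probability exactly $1/2$, a Zhang/Harris non-coexistence argument combined with Burton--Keane uniqueness for the lower bound $\tilde p_{\mathrm{cr}}\ge 1/2$, and RSW plus a sharp-threshold (Kesten/Russo) estimate for the upper bound $\tilde p_{\mathrm{cr}}\le 1/2$. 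One point to spell out in a full write-up: Burton--Keane uniqueness alone does not forbid an infinite open and an infinite closed cluster from coexisting; the contradiction requires the additional planar-topological step (Zhang's four-arm argument, or Harris's original version) that converts coexistence into two disjoint infinite clusters of the same colour. You do flag ``a topological separation argument,'' so there is no gap in the plan, only a step that would need to be made precise. You also correctly identify that the RSW lemma and the above-criticality crossing estimate are the hard technical core; citing them rather than reproving them --- as you propose at the end --- is exactly the pragmatic choice the thesis makes by stating the theorem without proof.
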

The reason, why we use the triangular lattice is that it has the advantage that two neighbouring hexagons always share an edge and not only a vertex, as it is the case on $\mathbb{Z}^2$.
\\

\noindent\textit{Proof of Theorem \ref{karamel} for d $=$ 2.}
Let us devide the space $\mathbb{R}^2$ into open hexagons with side length $s>0$ (up to bounderies of Lebesgue null sets). Obviously all these hexagons are centered at some points $x \in \mathbb{R}^2$. Hence, we write $A_x^s$ for the hexagons. Without loss of generality, we assume that one of the hexagons is centered at the origin. 
Now the center point $x$ of the hexagon $A_x^s$ is called \textit{open}, if it contains at least one Poisson point $X_i$ from our Poisson point process $X^\lambda = \{X_i \colon i \in I\}$. Otherwise, it is called \textit{closed}. 
Hence, the probability that $x$ is open is given by
\begin{equation*}
p_s = 1- \exp (- \lambda \, \text{Leb} (A_x^s )),
\end{equation*}
where 
$\text{Leb} (A_x^s) = 3 \sqrt{3} s^2 /2$.
As we can see in the following graphic, the center points $x$ form a triangular lattice, which we denote by $\mathcal{T}_s$. 
\begin{figure}[H]
  \centering
    \includegraphics[width=0.5\textwidth]{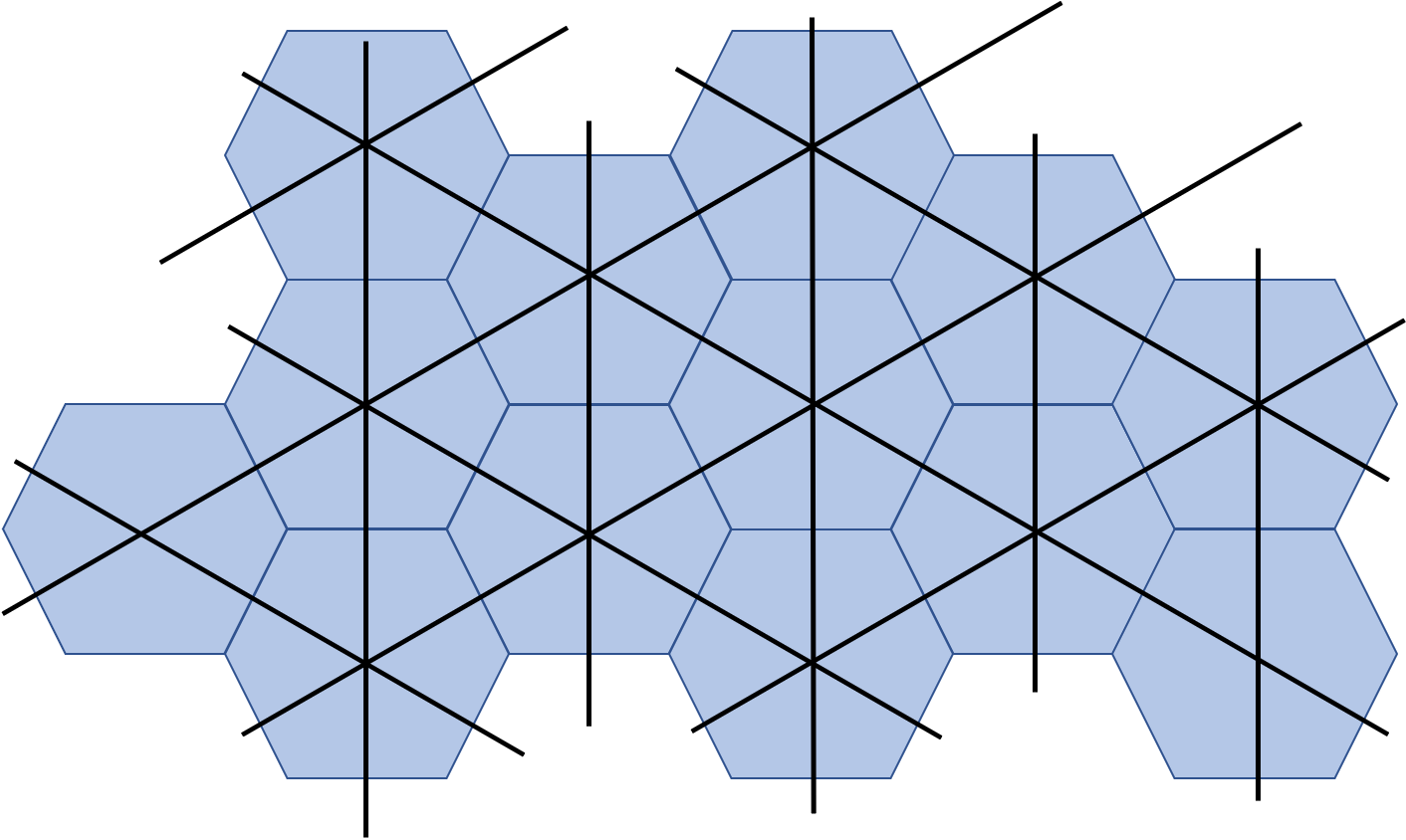}
\end{figure}
Now, we first prove that $\lambda_{\text{cr}} >0$, which means that there exists a subcritical phase.
As explained before, we assume that the fixed radius in our Boolean model is $1$. Hence, if $s>1$, which means the side-length of the hexagons is greater than $1$, it clearly holds that percolation of the Boolean model implies site-percolation on $\mathcal{T}_s$. Equivalently, it holds that no percolation on $\mathcal{T}_s$ implies no percolation in the Boolean model. From Theorem \ref{baerenkind} we know that the critical probability in the triangular lattice is greater than zero, more precisely, $p_{\text{cr}}=1/2$. This means, if we choose $p_s \in (0, 1/2)$, there is no percolation on  $\mathcal{T}_s$, which implies that there exists a subcritical phase in our Boolean model, what we aim to show. 
The following calculation shows, that it is possible to choose such a probability $p_s$.
\begin{align*}
p_s  = 1- \exp(- \lambda \, \text{Leb} (3 \sqrt{3} s^2 /2)) <1/2 
 \,\,\, &\Leftrightarrow \,\,\,
1/2 < \exp(- \lambda \, \text{Leb} (3 \sqrt{3} s^2 /2)) \\
 \,\,\, &\Leftrightarrow \,\,\,
\log (1/2) < - \lambda 3 \sqrt{3} s^2 /2 \\
 \,\,\, &\Leftrightarrow \,\,\,
-\log (2) < - \lambda 3 \sqrt{3} s^2 /2 \\
 \,\,\, &\Leftrightarrow \,\,\,
 \log (2) >\lambda 3 \sqrt{3} s^2 /2 \\
 \,\,\, &\Leftrightarrow \,\,\,
  \frac{2\log (2)}{3 \sqrt{3} s^2 }> \lambda .
\end{align*}
Hence, for all $\lambda$ in the interval $\big(0,2\log (2) (3 \sqrt{3} s^2)^{-1}\big)$, the Boolean model does not percolate.
Moreover, we know, that the critical density in the Boolean model is larger than the value $ 2\log (2) (3 \sqrt{3} s^2)^{-1}$ for any hexagon side-length, which is greater than $1$:
\begin{equation*}
\lambda_{\text{cr}} > 2\log (2) (3 \sqrt{3} s^2)^{-1} \text{ for all } s >1.
\end{equation*}
The smaller the parameter $s$ is, the larger is this lower bound for the critical density and hence it gets more precise. So we choose the smallest possible value $s=1$ and get the following lower bound for the critical density
\begin{equation*}
\lambda_{\text{cr}}  \geq\frac{ 2\log (2) }{ 3 \sqrt{3}}.
\end{equation*}
\indent
In the next step we show the existence of a supercritical phase, i.e., $\lambda_{\text{cr}} <\infty$. Thereby, we use the fact that any two Poisson points in two neighbouring hexagons are at most $\sqrt{13}s$ away from each other.
Moreover, we have that the fixed radius of the ball around each Poisson point is $1$.
Hence, if we choose the side-length of the hexagons so small, that $\sqrt{13}s < 1$, which means $s < 1/ \sqrt{13}$, site-percolation on $\mathcal{T}_s$ clearly implies percolation in the Boolean model.
From Theorem \ref{baerenkind}
we know that the triangular lattice $\mathcal{T}_s$ percolates for $p >\tilde{p}_{\text{cr}} = 1/2$.
Thus, for $p_s \in (1/2, 1)$, the triangular lattice percolates and as explained before, it follows that the Boolean model percolates.
Analogously to the proof of the subcritical phase, we can calculate that it is possible to choose $p_s > 1/2$ and obtain
\begin{align*}
p_s  = 1- \exp(- \lambda \, \text{Leb} (3 \sqrt{3} s^2 /2)) > 1/2 
 \,\,\, &\Leftrightarrow \,\,\,
  \frac{2\log (2)}{3 \sqrt{3} s^2 }< \lambda .
\end{align*}
This means, for all $\lambda$ in the interval $\big(0,2\log (2) (3 \sqrt{3} s^2)^{-1}\big)$, there exists percolation in the Boolean model.
Again, we can derive a bound for the critical density in the Boolean model, but instead of a lower bound, we obtain an upper bound. 
For all $s < 1/ \sqrt{13}$, there exists percolation in our Boolean model for any density, which is larger than the value $2\log (2) (3 \sqrt{3} s^2)^{-1}$ and hence
\begin{equation*}
\lambda_{\text{cr}} < \frac{2\log (2)}{3 \sqrt{3} s^2 } \text{ for all } s < 1/ \sqrt{13}.
\end{equation*}
The larger the parameter $s$ is, the smaller is this upper bound for the critical density, i.e., it is more precise. By choosing the largest possible value $s=1/ \sqrt{13}$ we get the following upper bound for the critical density
\begin{equation*}
\lambda_{\text{cr}}  \leq \frac{ 2\log (2) }{ 3 \sqrt{3} (1 / \sqrt{13})^2}
=
\frac{ 26\log (2) }{ 3 \sqrt{3} }
.
\end{equation*}
\indent
All together, it follows that there exists a phase transition in the Boolean model in dimension $d=2$. Moreover, we have shown that the critical density lies somewhere in the interval
$\big[ 2\log (2) ( 3 \sqrt{3})^{-1} ,  26\log (2) (3 \sqrt{3})^{-1} \big]$.
\begin{flushright}
$\quad\square $
\end{flushright}
\subsubsection{Random radii}
In this section we consider a modification of the above described Boolean model by incorporating a second source of randomness apart from the random point process. 
Remember that the radius within the Boolean model describes the strength of the emitted signal of a user. In the above section we made the simplification that each user has the same constant signal strength. In reality, however, fluctuations are very likely and hence 
we consider in this section the situation where the signal strength becomes random.
Our target is to show under which conditions there still exists a phase transition between a subcritical and a supercritical phase. The information of this section can be found in the book 'Continuum percolation' (Chapter 3.3) from 1996 of Meester and Roy. 
 \medskip
\\ \indent
Let $X^\lambda=(X_i)_{i \in I}$ be a PPP in $\mathbb{R}^d$ with intensity $\lambda >0$ and $d \geq 2$. Each point $X_i$ of $X^\lambda$ is the centre of a closed ball with a random radius $\rho_i$. The Boolean model is hence given by
\begin{equation*}
\Xi_{BM} = \bigcup\limits_{i \in I} (X_i + B_{\rho_i}(X_i)).
\end{equation*}
The radii corresponding to the different points are independent of each other and independent of $X^\lambda$. 
Furthermore, they are identically distributed as the radius random variable $\rho\geq0$. Meester and Roy restrict their study to the case where $\mathbb{E}[\rho^d]<\infty$.
Moreover, we assume that the radius random variable is not zero almost surely, i.e., $\mathbb{P}(\rho=0)<1$, otherwise the model becomes very trivial.
The region covered by the balls is called \textit{occupied region} and the connected components in the occupied region are called \textit{occupied components}.
The Boolean model driven by a PPP $X^\lambda$ with density $\lambda$ and radius random variable $\rho$ is denoted by $(X, \rho,\lambda)$.
In the following, the existence of a subcritical phase is shown as done in Theorem 3.3 in \cite[page 50]{MeesterRoy1996}.
\begin{thm}\label{subcritical_1}
For a Boolean model $(X,\rho,\lambda)$ on $\mathbb{R}^d$ with $d \geq 2$ the following holds: If 
$E[\rho^{2d-1}] < \infty$, then there exists $\lambda_0 >0$ such that for all $\lambda \in (0,\lambda_0)$, there are no infinite occupied components, i.e.
\begin{align*}
\mathbb{P}( \text{number of balls in any occupied component is finite})=1.
\end{align*}
\end{thm}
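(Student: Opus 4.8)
\textit{Proof strategy.} The plan is to reduce the statement to a fact about the occupied component of a single distinguished ball, then dominate that component by a branching process, and finally establish subcriticality of the branching process from the moment hypothesis. For Step~1 (reduction to a Palm statement): any occupied component with infinitely many balls must contain at least one ball $B_i:=B_{\rho_i}(X_i)$ with $i\in I$, so the complement of the event in the theorem is contained in $\bigcup_{i\in I}\{\#W(B_i)=\infty\}$, where $W(B_i)$ denotes the occupied component of $B_i$. Covering $\mathbb{R}^d$ by countably many unit cubes $(Q_n)_n$ and applying the Mecke equation to the marked PPP (which by Theorem~\ref{mark_thm} is a PPP on $\mathbb{R}^d\times[0,\infty)$ with intensity $\lambda\,\mathrm{Leb}\otimes\mathbb{P}_\rho$, where $\mathbb{P}_\rho$ is the law of $\rho$) gives
\begin{align*}
\mathbb{P}\big(\exists\, i\in I:\ X_i\in Q_n,\ \#W(B_i)=\infty\big)
&\le \mathbb{E}\big[\#\{i\in I:\ X_i\in Q_n,\ \#W(B_i)=\infty\}\big]\\
&= \lambda\int_{Q_n}\mathbb{P}^{o}\big(\#W_0=\infty\big)\,\mathrm{d}x
= \lambda\,\mathrm{Leb}(Q_n)\,\mathbb{P}^{o}\big(\#W_0=\infty\big),
\end{align*}
where $\mathbb{P}^{o}$ is the Palm version obtained by adding a ball $B_0:=B_{\rho_0}(o)$ with $\rho_0$ an independent copy of $\rho$ at the origin, and $W_0$ is its occupied component. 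Hence it suffices to produce $\lambda_0>0$ with $\mathbb{P}^{o}(\#W_0=\infty)=0$ for all $\lambda<\lambda_0$.

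For Step~2 (domination by a branching process) I would explore $W_0$ in breadth-first order, revealing at each step the balls meeting the current one. Because $B_{\rho_i}(X_i)\cap B_{\rho_j}(X_j)\neq\emptyset$ exactly when $X_j\in B_{\rho_i+\rho_j}(X_i)$, and because the Poisson points revealed at each step form a point process stochastically below a fresh Poisson process on the relevant region (independence and thinning of the PPP), $\#W_0$ is stochastically dominated by the total progeny of a Galton--Watson process whose type space is $[0,\infty)$: a particle of type $r$ produces offspring according to a Poisson process on $[0,\infty)$ with intensity $\mu_r(\mathrm{d}s)=\lambda\,\kappa_d\,(r+s)^d\,\mathbb{P}_\rho(\mathrm{d}s)$, where $\kappa_d=\mathrm{Leb}(B_1(o))$, and the root has type $\rho_0\sim\rho$. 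The offspring means $\lambda\kappa_d\,\mathbb{E}[(r+\rho)^d]$ are finite since $\mathbb{E}[\rho^{2d-1}]<\infty$ already forces $\mathbb{E}[\rho^d]<\infty$. It then remains to show that this branching process dies out almost surely for small $\lambda$.

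Step~3 (subcriticality) is the heart of the matter and the place where $\mathbb{E}[\rho^{2d-1}]<\infty$ is spent. The mean operator is $Tf(r)=\lambda\kappa_d\int(r+s)^d f(s)\,\mathbb{P}_\rho(\mathrm{d}s)$, and the difficulty is that a particle of large type $r$ has of order $\lambda\kappa_d r^d$ children while the children's types are size-biased by the weight $(r+s)^d$ relative to $\mathbb{P}_\rho$; one must rule out that these two effects jointly sustain survival with only $2d-1$ moments available. The natural device is truncation at a large level $K$, splitting types into small ($\le K$) and large ($>K$). For $\lambda$ small a small particle has expected number of small children at most $\lambda\kappa_d(2K)^d<\tfrac12$, so the small-type subpopulations are a.s.\ finite with uniformly summable total size; large particles are rare, with $\mathbb{P}(\rho>u)\le u^{-(2d-1)}\mathbb{E}[\rho^{2d-1}]$ by Markov's inequality. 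Expanding $(r+s)^d=r^d+\sum_{k=1}^{d}\binom{d}{k}r^{d-k}s^k$, so that the ``shell'' term is of order $r^{d-1}s+\cdots+s^d$, and using the $(2d-1)$-st moment to bound the expected ``surface content'' $\sum_v(1+\rho_v^{d-1})$ of successive generations by a geometric series, one aims to show that for $\lambda<\lambda_0(K)$ the process restricted to large types is also subcritical. Either a Lyapunov function $h\ge 1$ of polynomial growth on the (essentially truncated) state space with $Th\le\tfrac12 h$ --- making $M_n:=\sum_{v\in\text{generation }n}h(\rho_v)$ a non-negative supermartingale with $\mathbb{E}[M_n]\le 2^{-n}h(\rho_0)\to 0$, hence the $n$-th generation eventually empty --- or a direct summation of expected generation sizes then yields a.s.\ extinction, which by Step~1 is the assertion.

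I expect Step~3 to be by far the hardest. Steps~1 and 2 are soft; the delicate point is the moment bookkeeping in Step~3, specifically controlling the fact that a heavy radius simultaneously inflates the offspring count (a volume, exponent $d$) and injects a heavy tail into the next generation --- it is precisely the interplay of these, morally ``volume'' $d$ plus ``surface'' $d-1$, that makes $2d-1$ the natural moment budget, and arranging the estimates so that this budget actually closes is where the real work lies.
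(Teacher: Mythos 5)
Your Steps~1 and~2 set up essentially the same branching-process domination that the paper uses (the paper constructs the branching directly from a fixed centre rather than via a Mecke/Palm reduction, and first rounds $\rho$ up to the nearest integer so that the type space is discrete, but these are cosmetic differences). The genuine gap is in Step~3, and it is not merely a matter of unfinished bookkeeping: with the offspring intensity you wrote, $\mu_r(\mathrm{d}s)=\lambda\kappa_d\,(r+s)^d\,\mathbb{P}_\rho(\mathrm{d}s)$, the argument cannot close under the stated hypothesis $\mathbb{E}[\rho^{2d-1}]<\infty$. The mean kernel $(r,s)\mapsto(r+s)^d\mathbb{P}_\rho(\mathrm{d}s)$ contains a pure $r^d$ term (coming from children whose balls lie entirely inside the parent ball). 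Any Lyapunov function $h(r)=r^a$ that is to absorb the $r^d\cdot\mathbb{E}[\rho^a\mathds{1}_{\{\rho>K\}}]$ contribution for all large $r$ must have $a\ge d$, and then the $s^d$ end of the expansion forces you to control $\mathbb{E}[\rho^{a+d}]\ge\mathbb{E}[\rho^{2d}]$ --- one moment more than you have. The same obstruction appears if you iterate directly: $T^n$ produces a term of order $\mathbb{E}[\rho^{2d}]$ already at $n=2$.

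The missing idea --- and it is the paper's one non-trivial move --- is to count, among the balls meeting the current ball $S=B_i(x)$, only those \emph{not completely contained in} $S$. Balls entirely inside $S$ cannot lead the exploration anywhere new, so discarding them still dominates the cluster size, but the intensity drops from the full ball volume to the annulus volume $\pi_d\big[(i+j)^d-(\max(0,i-j))^d\big]$. Expanding and cancelling the even powers gives the clean bound
\begin{equation*}
(i+j)^d-(\max(0,i-j))^d\ \le\ 2^{d+1}\,i^{d-1}\,j^d\qquad(i\ge1,\ j\ge 0),
\end{equation*}
so that $\mu_{i,j}\le C\lambda\,i^{d-1}\,j^d\,\mathbb{P}(\rho=j)$. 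This is a \emph{rank-one} kernel $f(i)g(j)$ with $f(i)=i^{d-1}$ and $g(j)=j^d\mathbb{P}(\rho=j)$, so its $n$-th iterate is $f(i)g(j)\big(\sum_l f(l)g(l)\big)^{n-1}=f(i)g(j)\big(\mathbb{E}[\rho^{2d-1}]\big)^{n-1}$ --- precisely your ``volume $d$ plus surface $d-1$'' heuristic made rigorous, and a geometric series summable once $C\lambda\,\mathbb{E}[\rho^{2d-1}]<1$. Without the annulus reduction, the cancellation that removes the $i^d$ term never happens, and no truncation/Lyapunov scheme can recover it; you would be proving a theorem under $\mathbb{E}[\rho^{2d}]<\infty$, which is not what is asserted.
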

\begin{proof}
First note that we can suppose that the radius random variable $\rho$ is a positive integer. This is sufficient as for any other $\rho$, $\mathbb{E}[\rho^{2d-1}] < \infty$ implies $\mathbb{E}[\lceil\rho\rceil^{2d-1}] < \infty$, where $\lceil\rho\rceil$ denotes the smallest integer larger than or equal to $\rho$. Furthermore, if we have shown the theorem for $\lceil\rho\rceil$, it clearly holds for the smaller radius random variable $\rho$.\medskip
\\ \indent
Now, the idea of this proof is to fix some $x \in \mathbb{R}^d$ and to construct a branching process starting at this point. 
Let us therefore fix $i>0$ and consider an initial ball $S$ of radius $i$ which is centered at $x$. 
For $j \in \mathbb{N}$, the random number of balls of radius $j$ which intersect $S$ but which are not completely contained in $S$ is denoted by $n_j$.
Since $(X,\rho, \lambda)$ can be thought as the superposition of the independent Boolean models
$
\{(X_j,j,\lambda \, \mathbb{P}(\rho =j));\,\, j =0,1,2,...\},
$
the random variables $n_1,n_2,...$ are an independent sequence where 
each $n_j$ is Poisson distributed with parameter
\begin{align*}
\mu_{i,j} &:=
\mathbb{E}[\text{number of balls with radius } j \text{ centred in } \{z \in \mathbb{R}^d \colon i-j < |z-x|\leq i+j \}]
\\
&=
 \lambda \,\mathbb{P}(\rho =j)\, \pi_d \,[(i+j)^d - (\max(0,i-j))^d ].
\end{align*}
Thereby, the parameter $\pi_d$ denotes the volume of a $d$-dimensional ball with unit radius.
In the next step, we construct a branching process and say that $n_j$ is the number of children of $x$ of type $j$.
We define the infinite matrix $M$, which has $\mu_{i,j}$ as its $(i,j)$-th entry. 
The $n$-th generation of type $j$ is Poisson distributed with parameter $\mu_{i,j}^{(n)}$, which is the $(i,j)$-th entry of the matrix $M^n$. Note that the $0$-th generation of the branching process is taken to be the origin.
The expected number of members of the $n$-th generation is then given by
\begin{align*}
\sum\limits_{j=0}^\infty \mu_{i,j}^{(n)}
\end{align*}
and the expected number of total members in the entire branching process is
\begin{align*}
\mu_i :=\sum\limits_{n=0}^\infty \sum\limits_{j=0}^\infty \mu_{i,j}^{(n)}.
\end{align*}
\indent
\newpage
\noindent
Hence, we aim to show that $\mu_i$ is finite.
In order to find an upper bound for $\mu_i$, we need to consider two different cases for the radius $i$ of the initial ball $S$.
We need to study the case where $i\geq1$ and where $i=0$ separately.
We first consider the case where $i\geq1$. Then it holds for $i>j$
\begin{align*}
(i+j)^d-(\max(0,i-j))^d
 &=
(i+j)^d -(i-j)^d\\
&=
\sum\limits_{k=0}^d \Big(\begin{array}{c} d \\ k \end{array}\Big)i^{d-k}j^k
-
\sum\limits_{k=0}^d (-1)^k \Big(\begin{array}{c} d \\ k \end{array}\Big)i^{d-k}j^k\\
&=
\sum\limits_{k=1}^d \Big(\begin{array}{c} d \\ k \end{array}\Big)\underbrace{(1-(-1)^k)}_{\in\{0,2\}}\underbrace{i^{d-k}}_{\leq i^{d-1}}\underbrace{j^k}_{\leq j^d} \\
&\leq
 2 \,i^{d-1}j^d \underbrace{\sum\limits_{k=1}^d \Big(\begin{array}{c} d \\ k \end{array}\Big)}_{=2^d}\\
&=
 2^{d+1} \,i^{d-1}j^d 
\end{align*}
and for $i \leq j$ we have
\begin{align*}
(i+j)^d-(\max(0,i-j))^d =(i+j)^d \leq 2^dj^d \leq  2^{d+1} \,i^{d-1}j^d.
\end{align*}
Now we consider the case where $i=0$ and observe that
\begin{align*}
(i+j)^d-(\max(0,i-j))^d =j^d.
\end{align*}
All together, this yields 
\begin{align*}
\mu_{i,j} 
&\leq
C \lambda \, \mathbb{P}(\rho =j)i^{d-1}j^d & \text{for } i\geq 1\\
\mu_{i,j} 
&=
C \lambda \, \mathbb{P}(\rho =j)j^d & \text{for } i =0
\end{align*}
where $C:=2^{d+1} \pi_d$ is a constant. By using this, we can calculate an upper bound for the next generation. For $i \geq 1$ it holds
\begin{align*}
\mu_{i,j}^{(2)}
&=
\sum\limits_{l=0}^\infty \mu_{i,l} \cdot \mu_{l,j}\\
&\leq
\sum\limits_{l=0}^\infty C^2 \lambda^2 i^{d-1}j^d\, \mathbb{P}(\rho =j)l^{2d-1}  \, \mathbb{P}(\rho =l)\\
&=
 C^2 \lambda^2 i^{d-1}j^d\, \mathbb{P}(\rho =j) \sum\limits_{l=0}^\infty l^{2d-1}  \, \mathbb{P}(\rho =l)\\
&=
 C^2 \lambda^2 i^{d-1}j^d\, \mathbb{P}(\rho =j) \, \mathbb{E}[\rho^{2d-1}].
\end{align*}
Inductively we can easily see that for any generation $n \geq 1$ it holds that
\begin{align*}
\mu_{i,j}^{(n)}
&\leq
 C^n \lambda^n i^{d-1}j^d\, \mathbb{P}(\rho =j) \, (\mathbb{E}[\rho^{2d-1}])^{n-1}.
\end{align*}
Very similarly, one can show that for $i=0$, it holds
\begin{align*}
\mu_{i,j}^{(n)}
&\leq
 C^n \lambda^n j^d\, \mathbb{P}(\rho =j) \, (\mathbb{E}[\rho^{2d-1}])^{n-1}.
\end{align*}
Furthermore, note that by definition 
\begin{align*}
\mu_{i,j}^{(0)}
=
\begin{cases}
1 & \text{if } i=j\\
0 & \text{elsewhere.}
\end{cases}
\end{align*}
And hence for $i \geq 1$, the total expected number of members of the entire branching process can be bounded as follows:
\begin{align*}
\mu_i
&=
\sum\limits_{n=0}^\infty \sum\limits_{j=0}^\infty \mu_{i,j}^{(n)}\\
&=
1+\sum\limits_{n=1}^\infty \sum\limits_{j=0}^\infty \mu_{i,j}^{(n)}\\
&\leq
1+\sum\limits_{n=1}^\infty \sum\limits_{j=0}^\infty  C^n \lambda^n i^{d-1}j^d\, \mathbb{P}(\rho =j) \, (\mathbb{E}[\rho^{2d-1}])^{n-1}\\
&=
1+i^{d-1} \sum\limits_{j=0}^\infty j^d\, \mathbb{P}(\rho =j)  \sum\limits_{n=1}^\infty   C^n \lambda^n  \, (\mathbb{E}[\rho^{2d-1}])^{n-1} \\
&=
1+i^{d-1} \, \mathbb{E}[\rho^{d}] \sum\limits_{n=1}^\infty   C^n \lambda^n  \, (\mathbb{E}[\rho^{2d-1}])^{n-1}.
\end{align*}
Very similarly we obtain for $i=0$:
\begin{align*}
\mu_i
&\leq
1+ \, \mathbb{E}[\rho^{d}] \sum\limits_{n=1}^\infty   C^n \lambda^n  \, (\mathbb{E}[\rho^{2d-1}])^{n-1}.
\end{align*}
As $\mathbb{E}[\rho^{2d-1}] < \infty$, there exists
$0<\lambda < (C\mathbb{E}[\rho^{2d-1}] )^{-1}$, which yields
\begin{align*}
C \lambda \mathbb{E}[\rho^{2d-1}] < 1.
\end{align*}
\indent
Hence the expected number of total members of the entire branching process is finite, which yields that the number of balls in any occupied component is finite.\\
\end{proof}
In the following the existence of a supercritical phase in our continuous Boolean model is shown. The proof is outlined in the remark in \cite[page 52]{MeesterRoy1996}.
\begin{lem}\label{sonne}
For a Boolean model $(X,\rho,\lambda)$ on $\mathbb{R}^d$ with $d \geq 2$ there exists $\lambda^* >0$ such that there are unbounded components for all $\lambda\geq\lambda^*$.
\end{lem}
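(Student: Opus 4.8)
The plan is to bound the occupied region from below by a Boolean model with a \emph{constant} radius, to which the constant-radius phase transition (Theorem~\ref{karamel}) applies directly. \textbf{Step 1 (a Poisson process of large balls).} Since $\mathbb{P}(\rho=0)<1$, I would first fix a radius $r_0\in(0,\infty)$ with $q:=\mathbb{P}(\rho\geq r_0)>0$. The radii $(\rho_i)_{i\in I}$ are i.i.d.\ and independent of $X^\lambda$, so the subcollection $Y:=\{X_i\colon\rho_i\geq r_0\}$ is an independent $q$-thinning of $X^\lambda$; by the marking/thinning property of Poisson point processes (a standard consequence of the Marking Theorem~\ref{mark_thm}, with the $\rho_i$ as marks and $Y$ the points whose mark falls in $[r_0,\infty)$, used here on bounded windows and patched together since $\lambda\,\mathrm{Leb}$ is only $\sigma$-finite), the process $Y$ is again a homogeneous PPP on $\mathbb{R}^d$, now with intensity $\lambda q$.

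\textbf{Step 2 (domination).} Since $\rho_i\geq r_0$ implies $B_{\rho_i}(X_i)\supseteq B_{r_0}(X_i)$, one has
\begin{equation*}
\Xi_{BM}=\bigcup_{i\in I}\bigl(X_i+B_{\rho_i}(X_i)\bigr)\ \supseteq\ \bigcup_{i\in I\,:\,\rho_i\geq r_0}\bigl(X_i+B_{r_0}(X_i)\bigr),
\end{equation*}
and the right-hand side is exactly the constant-radius Boolean model of equation~(\ref{bool_const}) with radius $r_0$ driven by the PPP $Y$ of intensity $\lambda q$. Hence every unbounded occupied component of that constant-radius model is also an unbounded component of $\Xi_{BM}$, so it suffices to make the former percolate.

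\textbf{Step 3 (choice of $\lambda^*$).} By the scaling identity $\theta(\lambda,R)=\theta(\lambda R^d)$ recalled in Section~2.2.2, the critical density of the radius-$r_0$ model equals $\lambda_{\mathrm{cr}}/r_0^d$, which is finite by Theorem~\ref{karamel}. I would then put $\lambda^*:=2\lambda_{\mathrm{cr}}/(q\,r_0^d)\in(0,\infty)$: for every $\lambda\geq\lambda^*$ the intensity $\lambda q$ of $Y$ strictly exceeds $\lambda_{\mathrm{cr}}/r_0^d$, so the constant-radius Boolean model driven by $Y$ percolates, and by Step~2 the resulting unbounded component lies in $\Xi_{BM}$. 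This proves the lemma.

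The argument is a short monotonicity/domination argument and I do not expect a serious obstacle. The two points that deserve a little care are making the thinning in Step~1 rigorous via the Marking Theorem (so that $Y$ genuinely is a PPP of intensity $\lambda q$ inheriting the independence structure), and the passage from $\theta(\lambda q,r_0)>0$ to the almost sure existence of an unbounded component, which is the standard zero-one law for homogeneous Poisson point processes and should be invoked explicitly.
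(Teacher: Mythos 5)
Your proof is correct, and it takes a genuinely different route from the paper's. The paper discretizes directly: it tiles $\mathbb{R}^d$ by cubes of a small side length $\delta$, declares a cube \emph{open} if it contains a Poisson point carrying a ball of radius $\geq \epsilon$ (where $\epsilon$ is chosen with $\mathbb{P}(\rho>\epsilon)>\epsilon$ and $1-\epsilon>p_{\mathrm{cr}}$), then pushes $\lambda$ large enough to make the open-cube probability exceed $p_{\mathrm{cr}}$ and invokes discrete site percolation plus the geometric fact that any two points in neighbouring cubes are within $2\epsilon$ of each other. You instead \emph{thin} the process to the points with $\rho_i\geq r_0$, observe that the retained points with balls shrunk to the constant radius $r_0$ form a sub-Boolean-model of $\Xi_{BM}$, and appeal directly to the already-established constant-radius phase transition (Theorem~\ref{karamel}) together with the scaling $\theta(\lambda,R)=\theta(\lambda R^d)$. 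Your argument is shorter because it reuses Theorem~\ref{karamel} as a black box rather than re-running a lattice renormalization from scratch, and it also makes the lower bound $\lambda^*$ explicit in terms of $\lambda_{\mathrm{cr}}$; the paper's route is more self-contained in that it bypasses the continuum result entirely, but otherwise does more work. One cosmetic point: an unbounded component of the constant-radius sub-model is not itself a component of $\Xi_{BM}$ but is \emph{contained} in one, which is therefore unbounded --- the conclusion stands, just phrase the containment carefully. The two cautionary notes you flag at the end (thinning of a homogeneous PPP on all of $\mathbb{R}^d$ is Poisson, and a $0$--$1$ law upgrades $\theta>0$ to almost-sure existence of an infinite cluster) are exactly the two places that deserve a sentence each, and both are standard.
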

\begin{proof}
As mentioned before, the idea is to map the continuum percolation problem to a discrete lattice percolation problem.
In the discrete case, we know for $p>p_{\text{cr}}$, i.e., if the probability that an edge in a lattice is open is larger than $p_{\text{cr}}$, that there exists an infinite cluster of open edges.
Now, we choose $\epsilon>0$ such that $\mathbb{P}(\rho >\epsilon)>\epsilon$ and $1-\epsilon >p_{\text{cr}}$. In the next step, we choose $\delta>0$ so small that if we partition the space into cubes with side length $\delta$, any two points in neighbouring cubes are at distance at most $2\epsilon$. These cubes describe our discrete lattice, on which we map our continuous percolation problem.
Therefore, we define a cube $c$ as \textit{open} if it contains at least one point of our Poisson point process $X^\lambda$ with a ball of radius at least $\epsilon$. Otherwise it is called \textit{closed}. 
Hence, if there is an infinite cluster of open cubes, then there exists an unbounded component in the Boolean model $(X,\rho,\lambda)$, what we aim to show.
Therefore, we choose $N$ so large that
\begin{align*}
(1-(1-\epsilon)^N) \cdot (1-\epsilon) >p_{\text{cr}}.
\end{align*}
\indent
Let $\lambda^*$ be so large that the probability to have at least $N$ Poisson points in a cube with side length $\delta$ is at least $1-\epsilon$.
Then, the probability that a cube is open is larger than $p_{\text{cr}}$.
This can be shown as follows:
The probability that any cube is open is greater than or equal to the probability that the cube contains at least $N$ points, where at least one point has a ball of radius at least $\epsilon$. Then it follows by our parameter choice for any cube $c$
\begin{align*}
\mathbb{P}(c\text{ is open})
&\geq
\mathbb{P}(\exists \, X_1,...,X_N \in X^\lambda \cap c \text{ such that } \exists \, j \in \{1,...,N\} \colon \rho_j \geq\epsilon)\\
&=
\mathbb{P}(\exists\, j \in \{1,...,N\} \colon \rho_j \geq \epsilon) \cdot \mathbb{P}(\exists \, X_1,...,X_N \in X^\lambda \cap c)\\
&=
(1-\mathbb{P}(\rho_1\leq \epsilon, ...,\rho_N\leq \epsilon ) ) \cdot \mathbb{P}(\exists \, X_1,...,X_N \in X^\lambda \cap c)\\
&=
(1-(\mathbb{P}(\rho\leq \epsilon) )^N ) \cdot \mathbb{P}(\exists \, X_1,...,X_N \in X^\lambda \cap c)\\
&>
(1-(1-\epsilon)^N) \cdot (1-\epsilon)\\
&>
p_{\text{cr}}.
\end{align*} 
Note that
\begin{align*}
\mathbb{P}(\rho\leq \epsilon) 
=
1-\mathbb{P}(\rho>\epsilon) 
<
1-\epsilon.
\end{align*}
Furthermore note that distinct cubes are independently open or closed.  As for any $p > p_{\text{cr}}$, there exists an infinite cluster and we have shown that $\mathbb{P}(c\text{ is open})$ is greater than $p_{\text{cr}}$, we know that there exists an infinite cluster of open cubes for any density $\lambda \geq \lambda^*$. As explained before, this implies the existence of an unbounded component in the Boolean model $(X, \rho, \lambda)$.
\\
\end{proof}

\section{SINR percolation for Cox point processes}
After we have studied the Boolean model in the previous chapter, we now introduce another, more advanced model in the area of telecommunication: The SINR model.
The term \textit{SINR} means \textit{Signal to Interference and Noise Ratio} and indicates the crucial difference to the Boolean model: the message transmission between two points is interfered by the surrounding points and a fixed noise. \medskip
\\ \indent
Besides the above described adaption of the message transmission between points, we furthermore consider in this chapter a more sophisticated way to model the location of the points.
As in the previous chapter, we consider a random point process, where the different points represent the devices (or users), which like to send and receive messages. But instead of a Poisson point process (PPP), we consider a so-called \textit{Cox point process (CPP)}. 
Remember that within a PPP, we do not have any information about the environment of the users. 
Loosely speaking, a CPP incorporates a random environment, which makes the model more realistic.
Before we start to introduce the SINR model formally, we give some background information about CPPs in the next section.
\medskip
\\ \indent
If not stated differently, the content of this chapter can be found in the second part of the thesis 'Message routeing and percolation in interference limited multihop networks' from 2019 by Tóbiás, which is an extended version of his previous paper 'Signal to interference ratio percolation for Cox point processes' from 2018. 
\subsection{The Cox point process}
In this section we define what a CPP is and give some examples, which motivate why it is interesting to consider a CPP instead of a PPP even if it is more complex.
The information and figures in this section can be found in \cite[page 15ff.]{JahnelKoenig2018}.\medskip
\\ \indent
In order to give a formal definition of a CPP, we first need to explain what a random measure is.
\begin{defi}
Let $D \subset \mathbb{R}^d$ be a measurable set.
A random element $\Lambda$ of the space of all $\sigma$-finite measures $\mathcal{M}(D)$ on $D$, equipped with the smallest $\sigma$-algebra, such that all evaluation mappings
\begin{equation*}
\mathcal{M} (D) \rightarrow [0, \infty) \colon \mu \mapsto \mu(B)
\end{equation*}
are measurable for all measurable sets $B \subset D$, is called a \textit{random measure on $D$}.
\end{defi}
\begin{defi}
Let $\Lambda$ be a random measure on $D$. Then, the PPP $\mathbb{X}$ with random intensity measure $\Lambda$ is called a \textit{Cox point process directed by $\Lambda$}.
\end{defi} 
In other words, given a random measure $\Lambda$ on $D$, a CPP is simply a PPP which is conditioned on $\Lambda$. \medskip
\\ \indent
Let us now introduce two important properties of a CPP. First, remember from Lemma \ref{laplace_unique} that the distribution of a point process $\mathbb{X}$ is characterized by the Laplace transform. In the case of a CPP it is given by
\begin{equation*}
\mathcal{L}_{\mathbb{X}} (f) =
\mathbb{E} [ \mathrm{e}^{- S_f ( \mathbb{X})} ] =
\mathbb{E} \Big[ \exp \Big( \int_D \big( \mathrm{e}^{-f(x)} - 1 \big) \Lambda( \text{d}x) \Big) \Big],
\end{equation*}
where $f$ is a measurable function from $D$ to $[0, \infty)$.
Another interesting property is the expected number of points in a measurable volume $A \subset D$. In the case of a CPP, this is simply the expected intensity of $A$, which means
\begin{equation*}
\mathbb{E} [ N_{\mathbb{X}} (A)]
=
\mathbb{E} [ \mathbb{E} [ N_{\mathbb{X}} (A) | \Lambda]]
=
\mathbb{E} [ \Lambda (A) ].
\end{equation*}
\indent
Using CPPs enables us to model more realistic spatial telecommunication systems, where the users are located in a random environment.
For example it is possible to model an urban area environment, such as random street systems, through the random measure $\Lambda$. There exist several possibilities to do this, the following two examples introduce the main approaches.
\begin{exa}[Poisson-Voronoi tessellation and Poisson-Delaunay tessellation]\label{pvt}
In order to model a random street system, let us consider the case where $d=2$. Then, we define the random environment $\Lambda$ as the restriction of the Lebesgue measure to a random segment process $S$ in $\mathbb{R}^2$.
A natural choice for the measure on $S$ is the one-dimensional Hausdorff measure, which we denote by $\nu_1$. It attaches a finite and positive value to each bounded non-trivial line segment, more precisely it attaches its length.
By putting $\Lambda(\text{d}x) = \nu_1(S \cap \text{d}x) $, we get a random measure on $\mathbb{R}^2$ that is concentrated on $S$.\medskip
\\ \indent
There are several possible choices for $S$. As we would like to model random street systems, we focus on tessellations in this example. In the following we introduce a few interesting tessellations.
We start with the \textit{Poisson-Voronoi tessellation (PVT)}. Let us therefore consider a PPP $\mathbb{X} = (X_i)_{i \in I}$ in the communication area $D$. Now, we attach to each point $X_i$ a so-called \textit{Voronoi cell} $z(X_i)$, which is defined as follows:
\begin{equation*}
z(X_i) = \big\{ x \in D \colon || x-X_i || \leq \inf\limits_{j \in I} || x- X_j || \big\}.
\end{equation*}
This means, the cell $z(X_i)$ contains all points in $D$ that are closer to $X_i$ than any other point in $\mathbb{X}$.
When we draw the boundaries of all the cells which arise from our point process $\mathbb{X}$, we obtain a pattern, which looks like tessellations and hence explains the naming of $S$. In Figure \ref{fig:tessellation} an illustration of a PVT is given.
\medskip
\\ \indent
Another example of a tessellation is the \textit{Poisson-Delaunay tessellation (PDT)}, which is defined via a given PVT as follows: 
Any two points $X_i$ and $X_j$ of $\mathbb{X}$ are connected by a line if and only if there exists exactly one crossing of this line and one cell boundary of the given PVT.
The following figure is an illustration of a PVT and its corresponding PDT.
\begin{figure}[H]
  \centering
    \includegraphics[width=0.9\textwidth]{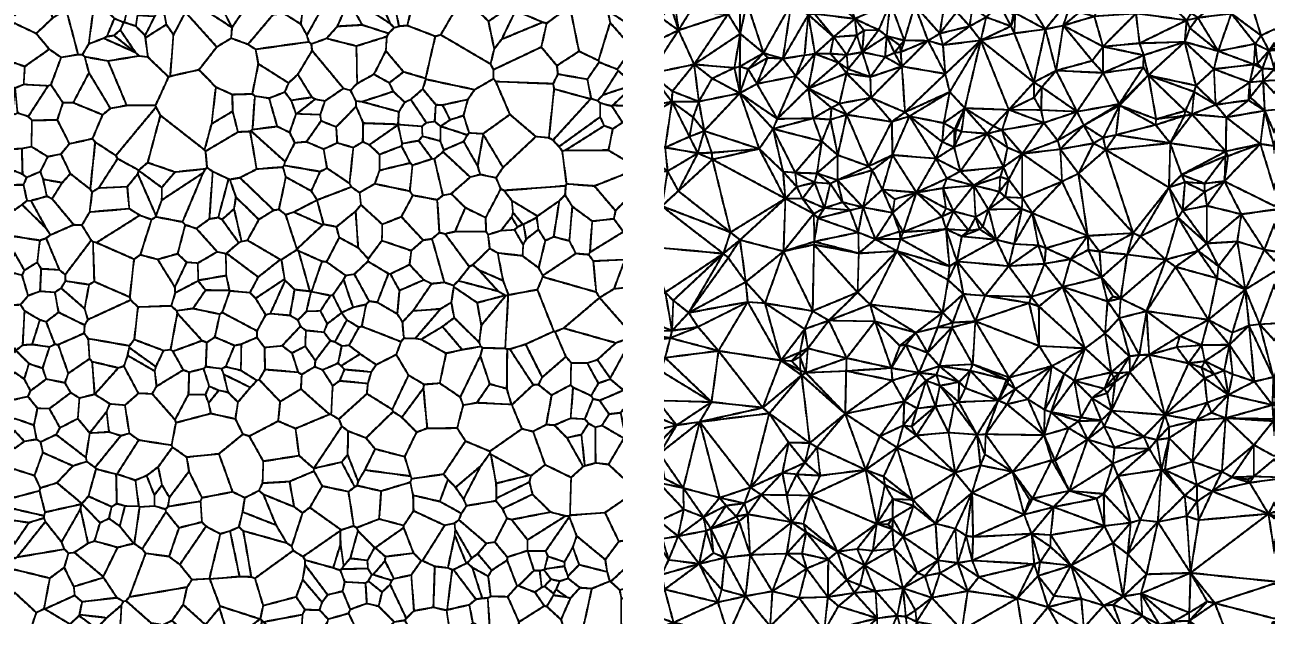}
  \caption{Realization of a PVT (left) and its corresponding PDT (right)}
  \label{fig:tessellation}
\end{figure}
\indent
\end{exa}
\begin{exa}[Manhatten grids]
Another example of a tessellation process are \textit{Manhattan grids (MG)}.
Let us therefore consider a stationary renewal process. By drawing perpendicular lines through the points of this process, the grid arises. As the name indicates, it represents for example the street system in Manhattan, New York. 
The reason why we consider a stationary renewal process as the underlying point process and not a simple PPP as before, is that the latter one only allows us to model a street system, where the distance between two consecutive streets is exponentially distributed. This would not be a realistic setting for a street system.\medskip
\\ \indent
It is possible to improve the MG further by putting additional rectangular lines inside the boxes given by the MG. 
The arising graph is called \textit{nested Manhattan grid (NMG)}.
See the following figures for illustrations of the above described processes.
\begin{figure}[H]
  \centering
    \includegraphics[width=0.9\textwidth]{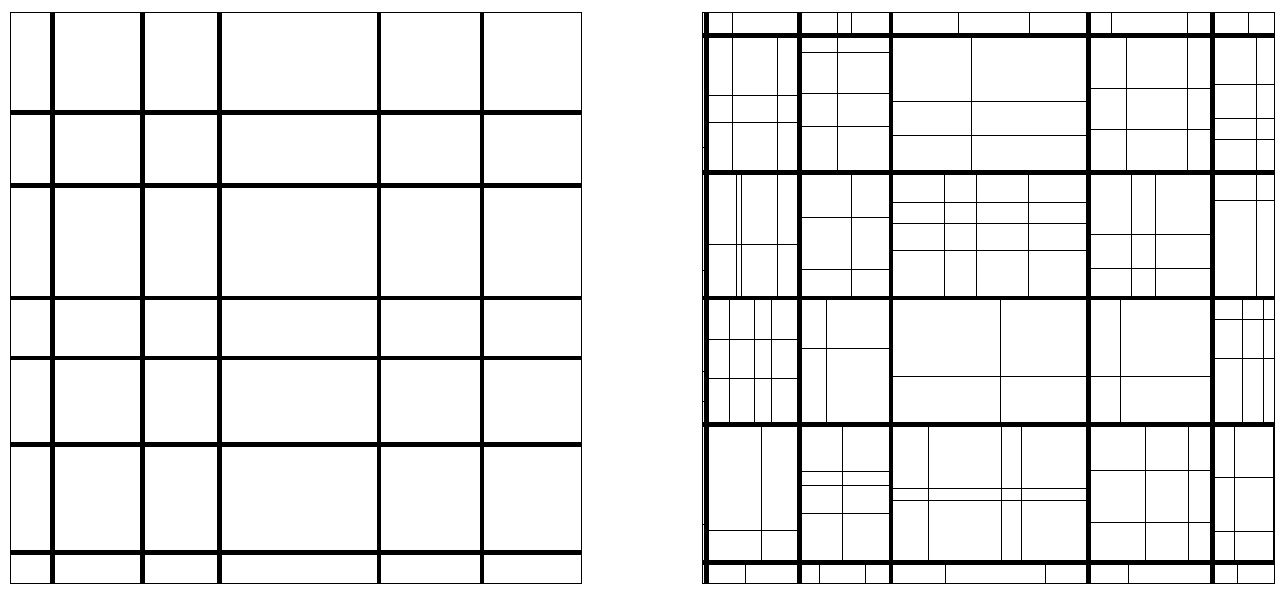}
  \caption{Realization of a MG (left) and a corresponding NMG (right)}
\end{figure}
\end{exa}
\subsection{Model definition and existence of a subcritical phase}
Let us first introduce the SINR model.
The information of this and the following section can be found in \cite[Chapter 4]{Tobias2019}.
Throughout the whole section, let $X^\lambda=(X_i)_{i \in I}$ be a CPP on $\mathbb{R}^d$ with a stationary intensity measure $\lambda \cdot \Lambda$, where $\lambda>0$ and $\mathbb{E}[\Lambda([0,1]^d)]=1$. Hence, $X^\lambda$ is a PPP with intensity measure $\lambda \cdot \Lambda$ conditioned on $\Lambda$. The random element $\Lambda$ in the space $\mathbb{M}$ of Borel measures on $\mathbb{R}^d$ describes the random environment. As it is stationary, $\Lambda(\cdot)$ equals $\Lambda(\cdot +x)$ in distribution for all $x\in \mathbb{R}^d$. \medskip
\\ \indent 
Remember that the points of the point process $X^\lambda$ represent the users in a telecommunication system.
In the following we describe how the message transmission between the users works.
Consider therefore two points $X_i$ and $X_j$ of $X^\lambda$, where $X_i$ wants to communicate with $X_j$. 
Similar to the Boolean model, which we introduced in the previous chapter, a successfull message transmission depends on the distance between the two points, which want to communicate. It makes sense to assume that the larger this distance is, the more difficult is the communication. In our model, this feature is expressed via a so-called \textit{path-loss function} $\ell$. There are a few important conditions for this function, which are stated later-on. Moreover, we assume that every point has a fixed signal power $P>0$. 
So far, we have explained that a successfull message transmission between the two points $X_i$ and $X_j$ depends on the expression
$P \cdot \ell(|X_i-X_j|)$. Now, we make another assumption, which determines the main difference between the Boolean model and the SINR model: We incorporate the interference from all other points that also transmit a signal that reaches $X_j$, which is 
\begin{equation*}
 \sum\limits_{k \neq i,j} P \cdot \ell(|X_k-X_j|)
\end{equation*}
 and some constant environment noise $N_0>0$. Moreover, we multiply the interference from the other points with a factor $\gamma>0$, which is a constant that describes the technology which is used within the system. The smaller it is, the less do the surrounding nodes interfere with the message transmission. 
All together, a successfull message transmission from point $X_i$ to $X_j$ depends on the value of the SINR, which is defined as:
\begin{equation*}\label{SINR}
\text{SINR}(X_i, X_j, X^\lambda, P, N_0, \gamma):=
\frac{P \cdot \ell(|X_i-X_j|)}{N_0+\gamma \sum\limits_{k \neq i,j} P \cdot \ell(|X_k-X_j|)}.
\end{equation*}
Remember that 'SINR' is the abbreviation for 'Signal  to Interference and Noise Ratio'.
In particular, we assume that within our telecommunication system there exists a technical factor $\tau$, which determines how sensitive the system is. The point $X_i$ can send messages successfully to the point $X_j$ if its SINR exceeds $\tau$. \medskip
\\ \indent
We still need to specify the properties of the path-loss function in a mathematical way.
The path-loss function $\ell \colon [0,\infty) \rightarrow  [0,\infty)$ satisfies the following conditions:
\begin{enumerate}
\item[a)] $\ell$ is continuous, constant on $[0,d_0]$ for some $d_0 \geq 0$ and strictly decreasing on $[d_0,\infty) \cap$ supp $\ell$,
\item[b)] $1 \geq \ell(0) > \tau N_0/P$,
\item[c)] $\int\limits_{\mathbb{R}^d}\ell (|x|)dx < \infty$.
\end{enumerate}
The last condition is needed in order to ensure that the denominator within the SINR stays finite. Otherwise, a successfull message transmission would never be possible. \medskip
\\ \indent
In our model, we do not want to analyse the case where a point $X_i$ can send a message to $X_j$, but $X_j$ can not send a message to $X_i$. Instead, we are only interested to study the situation where both points can send messages to each other.
Hence, we draw an edge between two points $X_i$ and $X_j$ if the SINR exceeds $\tau$ in both cases, which means it holds
$\text{SINR}(X_i, X_j, X^\lambda, P, N_0, \gamma) > \tau$ and 
$\text{SINR}(X_j, X_i, X^\lambda, P, N_0, \gamma) > \tau$.
By doing this we get the so called \textit{SINR graph}, which is denoted by $g_{(\gamma, N_0, \tau,P)}(X^\lambda)$. Note that for $\gamma =0$, the SINR graph equals the Gilbert graph $g_{r_\text{B}}(X^\lambda)$ with radius
\begin{equation}\label{radius}
r_\text{B} := \ell^{-1}\Big(\frac{\tau N_0}{P}\Big),
\end{equation}
which is strictly positive as $\ell$ is strictly decreasing on $[d_0, \infty)$ and hence 
$r_\text{B} >0$ is equivalent to 
$\tau N_0 / P < \ell(0)$, which holds true by our second condition on the path-loss function.
Note that we have $r_\text{B} > d_0$  by the first and second condition on $\ell$. \medskip
\\ \indent
After we have introduced the SINR model, we aim to study the existence of a phase transition. This means, we would like to analyse under which conditions there exists a critical density such that for larger densities there exists an infinite cluster almost surely and for smaller densities there exists no infinite cluster almost surely. Let us therefor fix the parameters $N_0$, $\tau$ and $P$.
The critical density is then defined by
\begin{align*}
\lambda_{N_0,\tau,P}:=
\inf\{\lambda>0 \colon \gamma^*(\lambda')>0 \text{ for all } \lambda' \geq \lambda\},
\end{align*}
where
\begin{align*}
\gamma^*(\lambda)=\gamma^*(\lambda,N_0,\tau,P):=
\sup\{\gamma >0 \colon \mathbb{P}(g_{(\gamma,N_0,\tau,P)}(X^\lambda) \text{ percolates})>0\}.
\end{align*}
Note that in most cases, it will hold that 
$\lambda_{N_0,\tau,P}=\inf\{\lambda>0 \colon \gamma^*(\lambda)>0\}$. \medskip
\\ \indent
In order to proof the phase transition, we need more conditions on the the random measure $\Lambda$.
Thereby, we need to understand how strong and how far reaching spatial stochastic dependencies are within our model. The concept of stability helps us to do this.
In the following we first introduce the definition of a stabilizing random measure, which is a sufficient criterium for the existence of a subcritical phase. 
Thereby, we denote the cube with side length $n>0$ and center $x\in \mathbb{R}^d$ by 
\begin{equation*}
\text{Q}_n(x):=x+\Big[-\frac{n}{2},\frac{n}{2}\Big]^d.
\end{equation*}
For the cube at the origin $o$ we simply write $ \text{Q}_n:=\text{Q}_n(o)$.
\newpage
\begin{defi}
The random measure $\Lambda$ is \textit{stabilizing}, if there exists a random field of stabilization radii $R=\{R_x\}_{x \in \mathbb{R}^d}$ defined on the same probability space as $\Lambda$ such that
\begin{enumerate}
\item 
$(\Lambda,R)$ are jointly stationary,
\item
$\lim\limits_{n \rightarrow \infty} \mathbb{P}(\sup\limits_{y \in \text{Q}_n\cap\, \mathbb{Q}^d}R_y <n)
=
\lim\limits_{n \rightarrow \infty} \mathbb{P}(R(\text{Q}_n)<n)=1$ and
\item 
for all $ n \geq 1$ and for any measurable function $f \colon \mathbb{M}\rightarrow [0,\infty)$ and finite $\phi \subset \mathbb{R}^d$ with $\text{dist}(x,\phi\setminus\{x\})>3n$ for all $x \in \phi$ the following random variables are independent
\begin{equation*}
f(\Lambda_{\text{Q}_n(x)}) \cdot \mathds{1}\{R( \text{Q}_n(x))<n\}, \,\, x \in \phi.
\end{equation*}
\end{enumerate}
Note that we used the following notation
\begin{equation*}
R(\text{Q}_n(x)):=\sup\limits_{y \in \text{Q}_n (x)\cap\, \mathbb{Q}^d}R_y.
\end{equation*}
\indent
A strong form of stabilization is called \textit{b-dependence}: For $b >0$, the random measure $\Lambda$ is \textit{b-dependent}, if $\Lambda_A$ and $\Lambda_B$ are independent whenever $\text{dist}(A,B) >b$. Thereby, $\Lambda_A$ indicates the restriction of the random measure $\Lambda$ to the set $A \subset \mathbb{R}^d$, respectively $\Lambda_B$ to the set $B \subset \mathbb{R}^d$.
Moreover, 
$\text{dist}(\phi, \psi) = \inf \{ ||x-y||_2 \colon x \in \phi, y \in \psi\}
$
denotes the $\ell^{2}$-distance between two sets $\phi, \psi \subset \mathbb{R}^d$.
\end{defi}
In order to be able to prove the existence of a supercritical phase, it is necessary, but not sufficient that $\Lambda$ is stabilizing. 
The reason is the possible scenario, where the environment puts no positive weight on a connected area and hence, there is no chance to obtain an infinite cluster even for a very large density $\lambda$.
Thus, we need a stronger stability assumption on $\Lambda$, which ensures enough connectivity. 
Therefore, we introduce the definition of an asymptotically essentially connected random measure in the following.
Note thereby that the \textit{support} of a measure $\mu$ is given by
$
\text{supp}(\mu):=\{x \in \mathbb{R}^d \colon \mu(\text{Q}_\epsilon(x)) >0 \,\,\,\forall \epsilon >0\}.
$
\begin{defi}
The stabilizing random measure $\Lambda$ with stabilizing radii $R$ is \textit{asymptotically essentially connected} if for all $n \geq 1$, whenever $R(\text{Q}_{2n}) <\frac{n}{2}$, it holds that
\begin{enumerate}
\item
$\text{supp}(\Lambda_{\text{Q}_n}) \neq \emptyset$ and
\item
there exists a connected component $\mathcal{C}$ such that
$
\text{supp}(\Lambda_{\text{Q}_n}) \subset \mathcal{C} \subset \text{supp}(\Lambda_{\text{Q}_{2n}}).
$
\end{enumerate}
\end{defi}
\begin{exa}
The stationary PVT on $\mathbb{R}^d$, which we defined in Example \ref{pvt} in the previous section, is asymptotically essentially connected (and hence as well stabilizing). The proof relies on the definition of the corresponding stabilization radius as
$R_x = \inf \{ || X_i - x || \colon X_i \in X^\lambda \}$.
See \cite[page 9]{HirschJahnelCali2018} for detailed information.
\end{exa}
Before we start to consider the phase transition in the SINR model, we state some results in the Boolean model.
Let us further on consider a CPP $X^\lambda$. Remember that in the Boolean model, two points $X_i$ and $X_j$ are connected by an edge if their distance is less than a fixed connection threshold $r>0$. The arising graph is denoted by $g_r(X^\lambda)$. 
In this model, the so called \textit{critical intensity} is given by 
\begin{equation*}
\lambda_{\text{c}}(r) = \inf\{\lambda \colon \mathbb{P}(g_r(X^\lambda) \text{ percolates})>0\}.
\end{equation*}
\indent
The following theorem is proven by Hirsch, Jahnel and Cali in \cite[page 3f. and page 11f.]{HirschJahnelCali2018} and used by Tóbiás in order to prove the existence of a subcritical phase in his SINR model.
\begin{thm}\label{hase}
Let $r>0$.
\begin{enumerate}
\item[a)] If $\Lambda$ is stabilizing, then $\lambda_{\mathrm{c}}(r)>0$.
\item[b)] If $\Lambda$ is asymptotically essentially connected, then $\lambda_{\mathrm{c}}(r)<\infty$.
\end{enumerate}
\end{thm}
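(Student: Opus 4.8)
The plan is to prove both statements by a renormalization argument: partition $\mathbb{R}^d$ into blocks, read off from the Cox--Boolean model a $\{0,1\}$-valued field on $\mathbb{Z}^d$, and compare this field with ordinary Bernoulli site percolation, much as in the proof of Theorem~\ref{karamel}, except that here the long-range correlations of the environment must first be tamed with the help of the stabilization radii. Throughout I fix $r>0$ and choose a block scale $n\ge r$ (large later), working with the blocks $\mathrm{Q}_n(nz)$, $z\in\mathbb{Z}^d$. Since $n\ge r$, any two Cox points joined by an edge of $g_r(X^\lambda)$ lie in the same block or in two $\ast$-adjacent blocks; hence every infinite component of $g_r(X^\lambda)$ induces an infinite $\ast$-connected set of ``occupied'' blocks, and conversely I shall build infinite components from infinite clusters of ``good'' blocks.

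For part~a) call a block \emph{occupied} if it contains a Cox point and \emph{tame} if $R(\mathrm{Q}_n(nz))<n$. By condition~(2) of stabilization the probability that a block is not tame is small for $n$ large; and, using that the Cox points in disjoint cubes are conditionally independent given $\Lambda$ (Poisson) together with condition~(3), the ``tame-and-occupied'' field restricted to any set of blocks with pairwise centre-distances exceeding $3n$ is independent. Conditionally on $\Lambda$ a tame block is occupied with probability $1-\mathrm{e}^{-\lambda\Lambda(\mathrm{Q}_n)}\le\lambda\Lambda(\mathrm{Q}_n)$, so its unconditional probability is at most $\lambda\,\mathbb{E}[\Lambda(\mathrm{Q}_n)]$, which is small for $\lambda$ small. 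Since an occupied block is either not tame (density small by the choice of $n$) or tame-and-occupied (density small by the choice of $\lambda$), splitting $\mathbb{Z}^d$ into a bounded number of sub-lattices with block distances above $3n$ and running a self-avoiding-path / Peierls count as in the proof of Theorem~\ref{haselnuss} on each of them shows that for $\lambda$ small there is a.s.\ no infinite $\ast$-connected set of occupied blocks, hence no infinite component of $g_r(X^\lambda)$, i.e.\ $\lambda_{\mathrm c}(r)>0$.

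For part~b) I use asymptotic essential connectedness and run the renormalization in the opposite direction, with a suitable nesting of cube scales chosen so that $\ast$-adjacent good blocks share support. Call $\mathrm{Q}_n(nz)$ \emph{good} if (i) the relevant stabilization radii over a dilate of the block are small --- whereupon asymptotic essential connectedness puts $\mathrm{supp}(\Lambda_{\mathrm{Q}_n(nz)})$, and the supports of its $\ast$-neighbours among good blocks, into a single connected component $\mathcal C$ of $\mathrm{supp}(\Lambda)$ over a slightly larger region --- and (ii) every cell of a fixed partition of that region of mesh $<r$ carrying positive $\Lambda$-mass contains a Cox point. On a good block these Cox points, chained along $\mathcal C$, all lie in one component of $g_r(X^\lambda)$, and this component is shared by $\ast$-adjacent good blocks, so an infinite cluster of good blocks yields an infinite component of $g_r(X^\lambda)$. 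Now $\mathbb{P}(\text{(i)})$ is independent of $\lambda$ and tends to $1$ as $n\to\infty$ by condition~(2); and, conditionally on $\Lambda$, the failure probability of (ii) is at most the sum over the deterministically bounded number of cells $q$ of $\mathds{1}\{\Lambda(q)>0\}\,\mathrm{e}^{-\lambda\Lambda(q)}$, each term tending to $0$ as $\lambda\to\infty$, so $\mathbb{P}(\text{(ii)})\to1$ by dominated convergence. Thus for $n$ fixed large and $\lambda$ large the density of good blocks is close to $1$, and --- again using the sparse-set independence from condition~(3) and comparing with supercritical Bernoulli site percolation on $\mathbb{Z}^d$ --- the good blocks percolate, so $\lambda_{\mathrm c}(r)<\infty$.

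I expect the main obstacle in part~a) to be the decoupling: the environment may be arbitrarily strongly correlated, and the whole argument rests on arranging the renormalized events --- via condition~(3) used at matching scales together with the conditional independence of the Cox points --- so that enough genuine independence survives for the standard subcritical comparison. In part~b) the crux is the two-scale geometric step: upgrading the purely set-theoretic statement that $\mathrm{supp}(\Lambda)$ is connected at scale $n$ to an actual connected Gilbert cluster of Cox points that spans each good block and merges with those of its neighbours, and doing so uniformly enough that the good-block probability really tends to $1$ as $\lambda\to\infty$ --- which is where local finiteness of $\Lambda$ and the bounded number of cells are used.
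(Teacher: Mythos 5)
The paper does not prove Theorem~\ref{hase} at all: it imports the statement from the cited work of Hirsch, Jahnel and Cali and only records the strategy in a sentence --- renormalize, define an auxiliary dependent site process, apply the Liggett--Schonmann--Stacey domination theorem (their Theorem~0.0), with stabilization controlling environment correlations in both parts and asymptotic essential connectedness supplying the connectivity needed for the supercritical part. Your sketch follows exactly this outline, so there is nothing in the paper to compare it against in more detail; for the full argument one must go to the cited reference. One technical point worth sharpening in part~a): the Peierls/domination count should be phrased in terms of the \emph{good} indicator (tame \emph{and} vacant) rather than directly in terms of the \emph{occupied} field. Condition~(3) of stabilization yields independence, on a sparse sublattice, precisely for random variables of the form $f\bigl(\Lambda_{\mathrm{Q}_n(nz)}\bigr)\cdot\mathds{1}\{R(\mathrm{Q}_n(nz))<n\}$, and (after first conditioning on $\Lambda$ to integrate out the Cox points) the good indicator is of this form; the occupied indicator is not, since its ``not tame'' part carries no a priori independence. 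The fix is painless --- bound $\mathbb{P}(\text{all blocks occupied})\le\mathbb{P}(\text{no block good})$, which factorizes on the sublattice with $\mathbb{P}(\text{not good})\le\mathbb{P}(R(\mathrm{Q}_n)\ge n)+\lambda\,\mathbb{E}[\Lambda(\mathrm{Q}_n)]$ --- but it is exactly the ``arranging the renormalized events'' issue you flag at the end, so it is worth writing out.
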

The proof of both parts of the above theorem is based on a renormalization argument. 
As we make the condition that $\Lambda$ is stabilizing, it is possible to define an auxiliary percolation process, which can be studied applying the techniques from \cite{Liggett1997}, more precisely, the authors use Theorem 0.0.
In order to prove the existence of a subcritical phase, we only need to prove that there exist large areas without any points. This is easier than the proof of the existence of a supercritical phase, where we need to show that it is possible to create suitable connected components. Thereby the stabilization condition is not sufficient. Consider for example the stabilizing measure $\Lambda \equiv 0$, for which holds that $\lambda_{\text{c}}(r) = \infty$ for every $r>0$. This shows that we need in addition to stability as well a strong local connectivity of the intensity measure. 
Hence, a supercritical phase only exists under the condition that $\Lambda$ is asymptotically essentially connected.
\medskip
\\ \indent
As already mentioned before, the following lemma states the existence of a subcritical phase in the SINR model. As its proof is based on the above Theorem \ref{hase} it becomes very short. Note that the proof of the existence of a supercritical phase is much more complex.
\begin{lem}
Let $N_0,\tau, P>0$.
It holds that
$\lambda_{N_0,\tau,P}\geq \lambda_{\mathrm{c}}(r_{\mathrm{B}})$. Furthermore, if $\Lambda$ is stabilizing, then $\lambda_{N_0,\tau,P}>0$.
\end{lem}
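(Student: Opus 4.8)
The plan is to exploit that switching on the interference can only destroy edges, so that every SINR graph with $\gamma>0$ is a subgraph of the Gilbert graph with radius $r_{\mathrm{B}}$, and then to invoke Theorem~\ref{hase}a). Concretely, I would first establish the graph inclusion. For any $\gamma>0$ and any two points $X_i,X_j$ of $X^\lambda$,
\[
\text{SINR}(X_i,X_j,X^\lambda,P,N_0,\gamma)
=\frac{P\,\ell(|X_i-X_j|)}{N_0+\gamma\sum_{k\neq i,j}P\,\ell(|X_k-X_j|)}
\le\frac{P\,\ell(|X_i-X_j|)}{N_0},
\]
since the interference sum is nonnegative. Hence, if this SINR exceeds $\tau$, then $\ell(|X_i-X_j|)>\tau N_0/P$, and because $\ell$ is strictly decreasing on $[d_0,\infty)\cap\mathrm{supp}\,\ell$ this forces $|X_i-X_j|<\ell^{-1}(\tau N_0/P)=r_{\mathrm{B}}$. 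Thus every edge of $g_{(\gamma,N_0,\tau,P)}(X^\lambda)$ is an edge of $g_{r_{\mathrm{B}}}(X^\lambda)$, so whenever $g_{r_{\mathrm{B}}}(X^\lambda)$ does not percolate, neither does $g_{(\gamma,N_0,\tau,P)}(X^\lambda)$, for every $\gamma>0$.

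Next I would translate this into the critical-density language. If $\lambda<\lambda_{\mathrm{c}}(r_{\mathrm{B}})$, then $\lambda$ lies strictly below the infimum defining $\lambda_{\mathrm{c}}(r_{\mathrm{B}})$, hence $\lambda\notin\{\lambda'\colon\mathbb{P}(g_{r_{\mathrm{B}}}(X^{\lambda'})\text{ percolates})>0\}$, i.e.\ $\mathbb{P}(g_{r_{\mathrm{B}}}(X^\lambda)\text{ percolates})=0$. By the inclusion above, $\mathbb{P}(g_{(\gamma,N_0,\tau,P)}(X^\lambda)\text{ percolates})=0$ for every $\gamma>0$, so $\gamma^*(\lambda)=\sup\emptyset=0$. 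Consequently, any $\lambda$ belonging to the set $\{\lambda>0\colon\gamma^*(\lambda')>0\text{ for all }\lambda'\ge\lambda\}$ must satisfy $\gamma^*(\lambda)>0$ (take $\lambda'=\lambda$), and therefore $\lambda\ge\lambda_{\mathrm{c}}(r_{\mathrm{B}})$. Taking the infimum over this set yields $\lambda_{N_0,\tau,P}\ge\lambda_{\mathrm{c}}(r_{\mathrm{B}})$.

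Finally, the second assertion is immediate: $r_{\mathrm{B}}>0$ because $\ell(0)>\tau N_0/P$, so if $\Lambda$ is stabilizing then Theorem~\ref{hase}a) gives $\lambda_{\mathrm{c}}(r_{\mathrm{B}})>0$, whence $\lambda_{N_0,\tau,P}\ge\lambda_{\mathrm{c}}(r_{\mathrm{B}})>0$. There is essentially no hard step here: the only points requiring care are the bookkeeping with the nested infimum defining $\lambda_{N_0,\tau,P}$ and the convention $\sup\emptyset=0$; all the substantive work is contained in Theorem~\ref{hase}, which we are entitled to quote. This is exactly why the lemma is short while the corresponding supercritical statement is not.
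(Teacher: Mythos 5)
Your proof is correct and follows exactly the same route as the paper: show that for every $\gamma>0$ the SINR graph is a subgraph of the Gilbert graph $g_{r_{\mathrm{B}}}(X^\lambda)$ (the paper phrases this via the $\gamma=0$ graph, which equals $g_{r_{\mathrm{B}}}(X^\lambda)$), conclude $\lambda_{N_0,\tau,P}\geq\lambda_{\mathrm{c}}(r_{\mathrm{B}})$, and then invoke Theorem~\ref{hase}a). The only difference is that you spell out the bookkeeping around the nested infimum in the definition of $\lambda_{N_0,\tau,P}$, which the paper leaves implicit.
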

\begin{proof}
For $N_0, \tau, P, \gamma>0$ it is clear that 
$g_{(\gamma, N_0, \tau,P)}(X^\lambda) \preceq g_{(0, N_0, \tau,P)}(X^\lambda)$, i.e., all connections in the left graph
$g_{(\gamma, N_0, \tau,P)}(X^\lambda)$ exist as well in the right graph
$g_{(0, N_0, \tau,P)}(X^\lambda)$. That means that if the right graph does not percolate, the left graph does not percolate either. As shown before, the right graph equals a Gilbert graph with critical density $\lambda_{\text{c}}(r_{\text{B}})$ and hence
$\lambda_{N_0,\tau,P}\geq \lambda_{\text{c}}(r_{\text{B}})$.
By Theorem \ref{hase} it follows that for a Gilbert graph with positive radius $r$, it holds that, if $\Lambda$ is stabilizing, then $\lambda_{\text{c}}(r)>0$. Hence, we can conclude $\lambda_{N_0,\tau,P}>0$.\\
\end{proof}
\newpage
\subsection{Existence of a supercritical phase}
In this section, we finally give the main theorem of this chapter that states under which conditions there exists a supercritical phase in the SINR model. 
\begin{thm}\label{thm_phase_transition}
Let $N_0,\tau, P>0$.
If $\Lambda$ is asymptotically essentially connected, then $\lambda_{N_0,\tau,P}<\infty$ holds if at least one of the following conditions is satisfied:
\begin{enumerate}
\item[(a)]
$\ell$ has compact support,
\item[(b)]
$\Lambda(\text{Q}_1)$ is almost surely bounded,
\item[(c)]
$\mathbb{E}[\exp(\alpha\Lambda(\text{Q}_1))]<\infty$ for some $\alpha >0$ and $\int\limits_x^\infty r^{d-1}\ell(r) dr = \mathcal{O}\big(\frac{1}{x}\big)$ as $x \rightarrow \infty$.
\end{enumerate}
\end{thm}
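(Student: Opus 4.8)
The plan is to carry out the three-step renormalization scheme announced in the introduction: discretize the continuum problem on $\mathbb{Z}^d$, show the renormalized site process percolates, and transfer back. Throughout I would fix once and for all a radius $r \in (d_0, r_\text{B})$; by condition (b) on $\ell$ and strict monotonicity this gives $P\ell(r) > \tau N_0$, and that slack is exactly what will leave room for a positive technology factor $\gamma$ at the end. I would also fix a large box side length $s$ and let the boxes $\text{Q}_s(sz)$, $z \in \mathbb{Z}^d$, index the sites.

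\textbf{Step 1 (Discretization).} Following the construction of \cite{HirschJahnelCali2018}, I would call a site $z$ \emph{good} if, on a bounded neighbourhood of $\text{Q}_s(sz)$, all stabilization radii are smaller than $s$ (so that asymptotic essential connectedness of $\Lambda$ is usable there), and the Cox points in that neighbourhood form, in a Gilbert graph $g_{r}(X^\lambda)$ with our chosen $r$, a connected cluster that reaches into each of the $2d$ adjacent boxes — so that good neighbouring boxes have linked clusters. Separately, for a large constant $M$, I would say $z$ has \emph{bounded interference} if $\sum_{k} P\,\ell(|X_k - y|) \le M$ for every $y$ in that same neighbourhood, and declare $z$ \emph{open} iff it is good and has bounded interference. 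The point of this split is that ``good'' alone delivers an infinite $g_r(X^\lambda)$-connected set of Cox points, while ``bounded interference'' upgrades each such edge to an SINR-edge for small $\gamma$.

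\textbf{Step 2 (Lattice percolation).} For the connectivity half: since $\Lambda$ is stabilizing, the event ``$z$ good'' depends on $(\Lambda, X^\lambda)$ only through a bounded-range region up to a stabilization event of probability $\to 1$, so the family $(\mathds{1}\{z\text{ good}\})_z$ is $k$-dependent, and asymptotic essential connectedness forces $\mathbb{P}(z\text{ good}) \to 1$ as $\lambda \to \infty$ — this is essentially Theorem \ref{hase}(b) reused. For the interference half I would decompose $\sum_k P\ell(|X_k-y|)$ into shells of boxes at $\ell^\infty$-distance $m$ from $z$; the $m$-th shell has $\mathcal{O}(m^{d-1})$ boxes, each carrying a conditionally Poisson point count of mean $\mathcal{O}(\lambda\,\Lambda(\text{Q}_s))$ weighted by $\approx \ell(ms)$. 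Under (a) only finitely many shells contribute and the interference becomes a genuine finite-range functional; under (b), boundedness of $\Lambda(\text{Q}_1)$ plus the integrability $\int_0^\infty r^{d-1}\ell(r)\,dr < \infty$ from Section 3.2 makes a Chernoff bound on the Poisson shell counts give $\mathbb{P}(z\text{ has large interference})$ as small as desired for $M$ large; under (c), the same Chernoff estimate works but now the exponential moment $\mathbb{E}[\exp(\alpha\Lambda(\text{Q}_1))]<\infty$ absorbs the moment generating function of each shell's conditionally-Poisson count, while $\int_x^\infty r^{d-1}\ell(r)\,dr = \mathcal{O}(1/x)$ makes the tilted shell sum summable. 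To keep the open-site field finite-range, I would truncate the interference at $\ell^\infty$-radius $L$: the beyond-$L$ part is, uniformly in $z$, smaller than any prescribed amount with probability $\ge 1-\varepsilon(L)$, so it can be absorbed into $N_0$. Hence for $\lambda$ and $M$ large, $\mathbb{P}(z\text{ open})$ is close to $1$ and $(\mathds{1}\{z\text{ open}\})_z$ is $k$-dependent, so a Liggett–Schonmann–Stacey domination theorem yields an infinite open cluster almost surely.

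\textbf{Step 3 (Back to the continuum) and the main obstacle.} On an infinite open cluster the ``good'' halves chain into an infinite $g_r(X^\lambda)$-connected set of Cox points, and for each of its edges $\{X_i,X_j\}$ one has $|X_i-X_j|<r$ and, by the ``bounded interference'' halves, interference at most $M$ at both endpoints, so
\begin{equation*}
\text{SINR}(X_i,X_j,X^\lambda,P,N_0,\gamma) \ge \frac{P\ell(r)}{N_0+\gamma P M} > \tau
\quad\text{whenever}\quad \gamma < \frac{P\ell(r)-\tau N_0}{\tau P M},
\end{equation*}
and symmetrically with $i,j$ swapped; thus such an edge lies in $g_{(\gamma,N_0,\tau,P)}(X^\lambda)$, that graph percolates, $\gamma^*(\lambda)>0$ for all large $\lambda$, and $\lambda_{N_0,\tau,P}<\infty$. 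I expect the genuinely hard part to be the interference control under condition (c): converting the two quantitative hypotheses into a single Chernoff bound that is summable across shells after the exponential tilt, and then patching that estimate into a truncated, finite-range renormalized event without letting the dependence range of the open-site field blow up. The connectivity side, by contrast, is close to a verbatim reuse of \cite{HirschJahnelCali2018} and Theorem \ref{hase}(b).
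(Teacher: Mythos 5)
Your proposal follows the same three-step skeleton as the paper — discretize, percolate the lattice, transfer back — and both Step~1 (define \emph{good} via stabilization, a nearby Cox point and local Gilbert connectivity in $g_r$ with $r\in(d_0,r_{\mathrm B})$; define \emph{bounded interference} separately) and Step~3 (the $\gamma'=\frac{P\ell(r)-\tau N_0}{\tau PM}$ bound, which is algebraically identical to the paper's $\frac{N_0}{PM}(\ell(r)/\ell(r_{\mathrm B})-1)$) agree with the paper's proof. The connectivity half of Step~2 is also handled the same way, via $k$-dependence and Theorem~\ref{hase}(b).

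The gap is in your interference control. You propose to ``keep the open-site field finite-range'' by truncating the interference at $\ell^\infty$-radius $L$ and absorbing the far-field tail into $N_0$, arguing the tail is small ``with probability $\ge 1-\varepsilon(L)$''. That last clause is exactly what breaks it: the far-field interference is a random quantity depending on Cox points arbitrarily far away, so it cannot be absorbed into the deterministic noise constant $N_0$. If instead you make ``far-field tail $\le \delta$'' part of the openness criterion, you have re-introduced unbounded dependence range into the renormalized field, and the Liggett--Schonmann--Stacey domination step you want to invoke no longer applies. Under condition~(a) the problem does not arise (the tail vanishes), but under (b) and especially (c) it is the crux. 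The paper's Proposition~\ref{prop_B} takes a structurally different route precisely because of this: it never tries to make $B_{n,M}$ finite-range. Instead it directly proves the multi-site bound $\mathbb{P}(B_{n,M}(z_1)=0,\dots,B_{n,M}(z_N)=0)\le q_{\mathrm B}^N$ via exponential Markov, the Laplace functional of the Cox process, the extended H\"older inequality to decouple the $N$ sites (each contributing $Y_i^{1/N}$), and — under (c) — a coarsening lemma (Lemma~\ref{lem_shot_noise}) together with Lemma~\ref{winter} to handle the resulting non-disjoint lattice points; the Peierls argument is then applied to $C_{n,M}$ after combining $A$ and $B$ via Cauchy--Schwarz in Proposition~\ref{prop_C}. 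Your Chernoff/tilt intuition points at the right estimate, but as a plan it needs to be detached from the finite-range/LSS framing and replaced by the direct multi-site bound, otherwise the argument does not close.
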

The proof of the above theorem comes naturally in three steps.
First, we map the continuous percolation problem on a discrete lattice. Secondly, we show that there exists a supercritical phase in the lattice. In the last step, we show that percolation in the lattice implies percolation in our original continuous model. We start by fixing the parameters $N_0, \tau, P >0$ and simplifying the notation for $\gamma\geq 0$ and $\lambda >0$ by
\begin{equation*}
g_{(\gamma)}(X^\lambda)= g_{(\gamma, N_0, \tau,P)}(X^\lambda).
\end{equation*}
\ 
\subsubsection{Step 1: Mapping on a lattice}
In order to map the continuous percolation problem on a discrete lattice, we need to give some definitions.
\begin{defi}\label{def_shot}
For $a \geq0$, we define a shifted version of the path-loss function $\ell$ by
\begin{equation*}
\ell_a \colon [0,\infty) \rightarrow [0,\infty), \,\, r \mapsto \ell(0) \cdot \mathds{1}\Big\{r < \frac{a \sqrt{d}}{2}\Big\} + \ell\Big(r-\frac{a \sqrt{d}}{2}\Big) \cdot \mathds{1}\Big\{r \geq \frac{a \sqrt{d}}{2}\Big\}
\end{equation*}
and the \textit{shot noise processes} are given by
\begin{equation*}
I(z)= \sum\limits_{X_i \in X^\lambda}\ell(|z-X_i|)
\text{ and } 
I_a(z)= \sum\limits_{X_i \in X^\lambda}\ell_a(|z-X_i|) \text{, } z \in \mathbb{R}^d.
\end{equation*}
\end{defi}
\medskip
Let us now choose some $r \in (d_0,r_{\text{B}})$. This is possible as assumptions (a)-(b) on $\ell$ imply that
$\ell(d_0) = \ell(0) >\tau N_0/P$ and hence
$d_0 < \ell^{-1}(\tau N_0/P)$. Together with equation (\ref{radius}) it follows that $d_0 <r_{\text{B}}$.
We continue with this given fixed parameter $r$ and introduce another important definition.
\newpage
\begin{defi}
For $n \in \mathbb{N}$ we define a site $z \in \mathbb{Z}^d$ as \textit{n-good} if
\begin{enumerate}
\item
$R(\text{Q}_n(nz)) < n/2$ and
\item
$X^\lambda \cap \text{Q}_n(nz)\neq \emptyset$ and
\item
every $X_i$, $X_j \in X^\lambda \cap \text{Q}_{3n}(nz)$ are connected by a path in $g_r(X^\lambda)\cap \text{Q}_{6n}(nz)$.
\end{enumerate}
Otherwise, it is called \textit{n-bad}. 
\end{defi}
Remember that $\text{Q}_n(nz)$ is the cube with side length $n$, which is centered at $nz$. Moreover,
$R(\text{Q}_n(nz))$ is the supremum over all stabilization radii of the points in $\text{Q}_n(nz) \cap \, \mathbb{Q}^d$, which exist as $\Lambda$ is stabilizing by assumption.
So, loosely speaking, a site $z$ of our lattice $\mathbb{Z}^d$ is $n$-good, if the stabilization radii near the point $nz$ are not too large, there exists a Cox point near the point $nz$ and any two Cox points near the point $nz$ are connected by a path of at most $r$-distant Cox points.
\medskip
\\ \indent
Using the above definitions, we are now able to map the continuous percolation problem on a discrete lattice.
Thereby, we define for $z \in \mathbb{Z}^d$, $n \in \mathbb{N}$ and $M>0$: 
\medskip
\\ \indent
$A_n(z)=\mathds{1}\{z \text{ is $n$-good}\}$, $B_{n,M}(z)=\mathds{1}\{I_{6n}(nz) \leq M\}$ and $C_{n,M}(z)=A_n(z) \cdot B_{n,M}(z)$. \medskip
\\ 
Then, a site $z$ of the lattice $\mathbb{Z}^d$ is \textit{open} if $C_{n,M}(z)$ equals $1$, otherwise, it is \textit{closed}.
So we separate the percolation problem in two parts. The first random variable $A_n(\cdot)$ takes care of the connectedness and the second random variable $B_{n,M}(\cdot )$ treats the interference. The random variable $C_{n,M}(\cdot)$ brings both aspects together.
\\ \
\subsubsection{Step 2: Percolation in the lattice}
In the second step we show that there exists percolation in the discrete lattice. 
The proof is structured in three parts accordingly to the above described mapping on the lattice. 
First, we show Lemma \ref{lem_A}, 
which covers the connectedness problem and states that for sufficiently large $n \in \mathbb{N}$ and $\lambda >0$ the probability that any pairwise distinct sites in the lattice are $n$-bad can be bounded arbitrarily small.
Afterwards, Proposition \ref{prop_B} states that for sufficiently large $n \in \mathbb{N}$ and $M >0$ the probability that the interference at any pairwise distinct sites in the lattice is greater than $M$ can be bounded arbitrarily small.
As the proof of the latter one is very comprehensive, it is shown separatly in Section \ref{sec_B}. The results of the lemma and the proposition are combined in the third part, which is Proposition \ref{prop_C}. The percolation of the lattice follows immediatly from the latter proposition using the Peierls argument.
\begin{lem}\label{lem_A}
For all sufficiently large $n \in \mathbb{N}$ and $\lambda >0$, there exists a constant $q_\mathrm{A} <1$ such that for any $N \in\mathbb{N}$ and pairwise distinct sites $z_1,...,z_N \in \mathbb{Z}^d$ it holds
\begin{equation*}
\mathbb{P}(A_n(z_1)=0,...,A_n(z_N)=0)\leq q_{\mathrm{A}}^N.
\end{equation*}
Moreover, for any $\epsilon >0$ and for sufficiently large $\lambda$, one can choose $n$ so large that $q_{\mathrm{A}} \leq \epsilon$.
\end{lem}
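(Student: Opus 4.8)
The plan is to establish the single-site estimate $\mathbb{P}(A_n(z)=0)$ can be made small, and then upgrade it to the joint estimate $q_{\mathrm A}^N$ via the $3n$-separation built into condition~(3) in the definition of $n$-good. For the single-site bound, I would decompose the event $\{A_n(z)=0\}$ according to which of the three defining conditions fails:
\begin{equation*}
\mathbb{P}(A_n(z)=0)\leq \mathbb{P}\big(R(\text{Q}_n(nz))\geq n/2\big)+\mathbb{P}\big(X^\lambda\cap\text{Q}_n(nz)=\emptyset\big)+\mathbb{P}(E_3(z)),
\end{equation*}
where $E_3(z)$ is the event that some pair $X_i,X_j\in X^\lambda\cap\text{Q}_{3n}(nz)$ is \emph{not} joined by a path in $g_r(X^\lambda)\cap\text{Q}_{6n}(nz)$. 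The first term tends to $0$ as $n\to\infty$ by the stabilization property (part~2 of the definition of stabilizing), uniformly in $z$ by stationarity. The second term requires care: conditionally on $\Lambda$ it equals $\exp(-\lambda\Lambda(\text{Q}_n(nz)))$, so we need $\Lambda(\text{Q}_n(nz))$ to be bounded below with high probability; since $\Lambda$ is asymptotically essentially connected, on the event $\{R(\text{Q}_{2n})<n/2\}$ (translated) the support of $\Lambda_{\text{Q}_n(nz)}$ is nonempty, hence $\Lambda(\text{Q}_n(nz))>0$ a.s.\ on that event, and one can then choose $\lambda$ large (after $n$ is fixed) to make $\mathbb{E}[\exp(-\lambda\Lambda(\text{Q}_n))\mathds 1\{\cdots\}]$ small --- this is exactly where the ``for sufficiently large $\lambda$'' hypothesis enters. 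The third term is the heart of the matter.

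For the connectivity term $\mathbb{P}(E_3(z))$, the strategy is to invoke Theorem~\ref{hase}(b): since $\Lambda$ is asymptotically essentially connected and $r>d_0>0$, we have $\lambda_{\mathrm c}(r)<\infty$, so for $\lambda$ large enough $g_r(X^\lambda)$ percolates and, more to the point, the renormalized construction underlying Theorem~\ref{hase}(b) --- the auxiliary site percolation from \cite{HirschJahnelCali2018} --- gives, with probability close to $1$, a ``good box'' structure in which all Cox points of a block are mutually connected within a bounded neighbourhood. Concretely, on the event that $R(\text{Q}_{2n}(nz))<n/2$ and the analogous stabilization/connectedness events hold on the relevant sub-boxes tiling $\text{Q}_{3n}(nz)$, the asymptotic essential connectedness forces each $\text{supp}(\Lambda_{\text{Q}_n})$-cluster into a connected component inside $\text{supp}(\Lambda_{\text{Q}_{2n}})$, and these overlap across adjacent sub-boxes, so all Cox points in $\text{Q}_{3n}(nz)$ lie in one component of $g_r$ contained in $\text{Q}_{6n}(nz)$. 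Thus $\mathbb{P}(E_3(z))$ is bounded by the probability that some stabilization radius in a bounded number of sub-boxes of $\text{Q}_{3n}(nz)$ is too large, which $\to 0$ as $n\to\infty$. Combining, $\mathbb{P}(A_n(z)=0)\to 0$ as $n\to\infty$ for $\lambda$ large, uniformly in $z$, which already gives the last sentence of the lemma once we pass to the joint bound.

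To pass from the single-site bound to $\mathbb{P}(A_n(z_1)=0,\dots,A_n(z_N)=0)\leq q_{\mathrm A}^N$, the plan is a standard argument for finitely dependent fields. The event $\{z \text{ is $n$-good}\}$ is \emph{not} determined by $\Lambda$ restricted to a single box, but it is determined (together with the relevant stabilization indicators) by data within $\text{Q}_{6n}(nz)$; in particular, by the independence clause (part~3) in the definition of stabilizing, the variables $f(\Lambda_{\text{Q}_n(x)})\mathds 1\{R(\text{Q}_n(x))<n\}$ are independent when the centres are more than $3n$ apart. One therefore extracts from $z_1,\dots,z_N$ a subfamily whose $\ell^\infty$-pairwise distances (on the lattice, hence $\cdot\,n$ in $\mathbb{R}^d$) exceed the range, of size at least $N/K$ for a dimension-and-$n$-dependent constant $K$, discards the rest (monotonicity: intersecting over fewer events only increases the probability), and bounds the probability of the subfamily's bad events by the product of the single-site probabilities. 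This yields $q_{\mathrm A}^N$ with $q_{\mathrm A}=(\sup_z\mathbb{P}(A_n(z)=0))^{1/K}<1$ for $n,\lambda$ large, and $q_{\mathrm A}\le\epsilon$ by first taking $\lambda$ large and then $n$ large.

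The main obstacle I anticipate is the connectivity term $\mathbb{P}(E_3(z))$: turning the \emph{qualitative} statement ``asymptotically essentially connected $\Rightarrow\lambda_{\mathrm c}(r)<\infty$'' into the \emph{quantitative, local} statement that all Cox points inside $\text{Q}_{3n}(nz)$ are joined by a $g_r$-path staying inside $\text{Q}_{6n}(nz)$, with failure probability going to $0$ in $n$, requires carefully unwinding the renormalization from \cite{HirschJahnelCali2018} at finite (but large) scale rather than just asymptotically. A secondary subtlety is the interplay of the two limits: $n$ must be chosen first to control the stabilization and connectivity events, and only afterwards can $\lambda$ be sent to infinity to kill the ``empty box'' term $\exp(-\lambda\Lambda(\text{Q}_n))$; getting the quantifier order right in the statement ``for sufficiently large $\lambda$, one can choose $n$ so large that $q_{\mathrm A}\le\epsilon$'' needs a little attention.
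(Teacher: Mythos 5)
Your proposal follows essentially the same two-stage structure as the paper's proof: establish that the single-site probability $\mathbb{P}(A_n(z)=0)$ can be made small, then upgrade to the joint bound $q_{\mathrm A}^N$ by extracting a subfamily of pairwise-far (hence independent) sites via the finite-range dependence of the $n$-good process. The paper is in fact terser than your proposal: it does not decompose the single-site event at all, but simply cites \cite{HirschJahnelCali2018} for the $7$-dependence of the $n$-good process and for the iterated limit $\lim_{n\to\infty}\lim_{\lambda\to\infty}\mathbb{P}(A_n(z)=0)=0$, then applies the pigeonhole extraction to get $m\ge N/8^d$ independent indicators. Your three-term decomposition and the discussion of how each term is controlled (stabilization for the radius event, asymptotic essential connectedness plus large $\lambda$ for the empty-box event, the HJC renormalization for the connectivity event) correctly unwinds what is being cited, and your observation that the connectivity term is the crux is right; the paper sidesteps exactly this by black-boxing it. One small inaccuracy: the constant $K$ in your subfamily extraction should be purely dimension-dependent ($K=8^d$ in the paper), not $n$-dependent, since the $n$-good process is $7$-dependent on $\mathbb{Z}^d$ for every $n$; this does not affect the argument, but stating $K$ as $n$-dependent would needlessly weaken the conclusion as $n\to\infty$.
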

\begin{proof}
From \cite[page 11]{HirschJahnelCali2018} follows for asymptotically essentially connected $\Lambda$, that any process of $n$-good sites is $7$-dependent and that percolation of $n$-good sites implies percolation of the Boolean model $g_r(X^\lambda)$.
Furthermore, in \cite[page 12]{HirschJahnelCali2018} it is shown that for all $z \in \mathbb{Z}^d$: 
\begin{equation}\label{lim_equ}
\lim\limits_{n \rightarrow \infty}\lim\limits_{\lambda \rightarrow \infty}\mathbb{P}(A_n(z)=0)=0,
\end{equation}
where the convergence is uniform in $z \in \mathbb{Z}^d$. \medskip
\\ \indent
Now, let $N \in \mathbb{N}$ and $z_1,...,z_N \in \mathbb{Z}^d$. Then it follows by 7-dependence: There exists $m \geq 1$ and a subset $\{k_j\}_{j=1}^m$ of $[N]=\{1,....,N\}$ such that $A_n(z_{k_1}),...,A_n(z_{k_m})$ are independent and $m \geq N/8^d$.
Loosely speaking, this means that if we consider a large enough number of points, we will find independent ones. Hence, we obtain
\begin{align*}
\mathbb{P}(A_n(z_1)=0,...,A_n(z_N)=0)
&\leq
\mathbb{P}(A_n(z_{k_1})=0,...,A_n(z_{k_m})=0)\\
&=
\mathbb{P}(A_n(o)=0)^m\\
&\leq
\mathbb{P}(A_n(o)=0)^{\frac{N}{8^d}}.
\end{align*}
Note that the last  inequality clearly holds as $\mathbb{P}( \cdot) \in [0,1]$.
Then, we define 
\begin{equation*}
q_{\text{A}}:=\limsup\limits_{n \rightarrow \infty}\mathbb{P}(A_n(o)=0)^{\frac{1}{8^d}}
\end{equation*} 
and by equation (\ref{lim_equ}), the parameter $q_{\text{A}}$ tends to $0$ as the density $\lambda$ tends to infinity.
All together, it follows
\begin{equation*}
\mathbb{P}(A_n(z_1)=0,...,A_n(z_N)=0)
\leq
\mathbb{P}(A_n(o)=0)^{\frac{N}{8^d}}
\leq 
q_{\text{A}}^N.
\end{equation*}
Note that by definition it holds
\begin{equation*}
q_{\text{A}} = 
\limsup\limits_{n \rightarrow \infty} \, \mathbb{P}(\text{ The origin is } n\text{-bad.})^{\frac{1}{8^d}}
,
\end{equation*}
 where the probability that the origin is $n$-bad means, that either 
$R(\text{Q}_n(o)) \geq n$ or the intersection $X^\lambda \cap \text{Q}_n(o)$ is empty
or every $X_i, X_j \in X^\lambda \cap \, \text{Q}_{3n}(o)$ are connected by a path in $g_r(X^\lambda)\cap \text{Q}_{6n}(o)$.
Hence, the parameter $q_{\text{A}}$ clearly can be made arbitrarily small by choosing a larg enough parameter $n$.\\
\end{proof}
\begin{prop}\label{prop_B}
Under assumption (a), (b) or (c) in Theorem \ref{thm_phase_transition} the following holds:\\
For all $\lambda >0$ and for all sufficiently large $n \in \mathbb{N}$ and $M >0$, there exists a constant $q_{\mathrm{B}} <1$ such that for any $N \in\mathbb{N}$ and pairwise distinct sites $z_1,...,z_N \in \mathbb{Z}^d$ it holds
\begin{equation*}
\mathbb{P}(B_{n,M}(z_1)=0,...,B_{n,M}(z_N)=0)\leq q_{\mathrm{B}}^N.
\end{equation*}
Moreover, for any $\epsilon >0$, $\lambda >0$ and large enough $n \in \mathbb{N}$, one can choose $M$ so large that $q_{\mathrm{B}} \leq \epsilon$.
\end{prop}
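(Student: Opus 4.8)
The plan is to convert the joint event $\{B_{n,M}(z_1)=0,\dots,B_{n,M}(z_N)=0\}$ into a single exponential Markov (Chernoff) estimate for an aggregated shot noise, and then to control the resulting exponential moment by conditioning on the environment $\Lambda$. The starting, purely algebraic, observation is that by Definition \ref{def_shot}
\[
\sum_{j=1}^N I_{6n}(nz_j)=\sum_{X_i\in X^\lambda}h(X_i),\qquad h(x):=\sum_{j=1}^N\ell_{6n}(|nz_j-x|),
\]
so the sum of the $N$ interferences is itself a shot noise driven by $X^\lambda$, now with response function $h$. The point to exploit is that $h$ is bounded by a constant $H=H(n,d,\ell)$ that does \emph{not} depend on $N$ or on the chosen sites: the centres $nz_j$ lie on the dilated lattice $n\mathbb{Z}^d$, at most $C(d)$ of them fall in the flat region $\{|nz_j-x|<3n\sqrt d\}$ of $\ell_{6n}$ around a given $x$, and the rest contribute the convergent lattice sum $\sum_{w\in n\mathbb{Z}^d}\ell_{6n}(|w-x|)$, finite uniformly in $x$ because $\ell$ is bounded and integrable. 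By the same reasoning, writing $\mu_w:=\sup_{z\in\text{Q}_1(w)}h(z)$ one gets $\sum_{w\in\mathbb{Z}^d}\mu_w\le C(n,d,\ell)\,N$.

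Since $B_{n,M}(z_j)=0$ is exactly the event $I_{6n}(nz_j)>M$, for a fixed small $s>0$ I would estimate
\[
\mathbb{P}\big(I_{6n}(nz_j)>M\ \forall j\big)\le\mathbb{P}\Big(\tfrac1N\textstyle\sum_jI_{6n}(nz_j)>M\Big)\le e^{-sNM}\,\mathbb{E}\big[e^{s\sum_jI_{6n}(nz_j)}\big],
\]
and then condition on $\Lambda$. As $X^\lambda$ is, given $\Lambda$, a PPP with intensity $\lambda\Lambda$, the exponential-moment formula for Poisson shot noise (an extension of Theorem \ref{Campbell} to bounded response functions) together with the elementary bound $e^{su}-1\le s\,e^{sH}u$ for $0\le u\le H$ yields
\[
\mathbb{E}\big[e^{s\sum_jI_{6n}(nz_j)}\,\big|\,\Lambda\big]=\exp\Big(\lambda\!\int\!\big(e^{sh(x)}-1\big)\Lambda(\mathrm dx)\Big)\le\exp\Big(\lambda s\,e^{sH}\!\int\! h(x)\,\Lambda(\mathrm dx)\Big).
\]
It then remains to prove, for all small enough $s>0$, an estimate $\mathbb{E}\big[\exp\!\big(\lambda s\,e^{sH}\!\int h\,\mathrm d\Lambda\big)\big]\le e^{c\,s\,N}$ with $c=c(\lambda,n,d,\ell)$; combined with the previous two displays this gives $\mathbb{P}(B_{n,M}(z_j)=0\ \forall j)\le e^{-sN(M-c)}=:q_{\mathrm B}^N$, so $q_{\mathrm B}=e^{-s(M-c)}<1$ once $M>c$, and $q_{\mathrm B}\to0$ as $M\to\infty$, which is the ``moreover'' assertion.

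The remaining estimate is where the three hypotheses of Theorem \ref{thm_phase_transition} enter, after bounding $\int h\,\mathrm d\Lambda\le\sum_w\mu_w\,\Lambda(\text{Q}_1(w))$ with $\sum_w\mu_w\le C\,N$. Under (b), $\Lambda(\text{Q}_1)\le\kappa$ almost surely, so $\int h\,\mathrm d\Lambda\le\kappa C N$ deterministically and nothing more is needed. Under (a), $\ell$ — hence $\ell_{6n}$ and $h$ — is compactly supported, so $I_{6n}(nz)$ is a bounded-range functional of $(\Lambda,X^\lambda)$; here I would instead run the finite-dependence/decoupling argument of Lemma \ref{lem_A} (following \cite{HirschJahnelCali2018}): extract from $z_1,\dots,z_N$ an independent subfamily of size $\ge c\,N$ of these local functionals and bound each single-site probability $\mathbb{P}(I_{6n}(nz)>M)$ by Markov, using $\mathbb{E}[I_{6n}(nz)]=\lambda\!\int_{\mathbb{R}^d}\ell_{6n}(|y|)\,\mathrm dy<\infty$.

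Case (c) is the one I expect to be the genuine obstacle, since neither a deterministic bound nor a compact support is available: one splits each $\ell_{6n}(|nz_j-\cdot|)$ into its flat part near $nz_j$ and dyadic annular tails, controls the $\Lambda$-mass of the $k$-th annulus unit-cube by unit-cube through the exponential moment $\mathbb{E}[\exp(\alpha\Lambda(\text{Q}_1))]<\infty$, and uses the decay hypothesis $\int_x^\infty r^{d-1}\ell(r)\,\mathrm dr=\mathcal O(1/x)$ precisely to keep the double sum over $j$ and over annuli of order $N$ rather than divergent. The delicate bookkeeping is to combine these exponential moments across the \emph{dependent} unit cubes — which is where the stabilization structure of $\Lambda$ is invoked to decouple — while keeping the exponent linear in $N$ with a constant that can be absorbed into $M$. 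Once this is in place the conclusion follows exactly as in the second paragraph; it is also here that $n$ may have to be taken large (so that the block and decoupling estimates apply), with $M$ then chosen after $n$.
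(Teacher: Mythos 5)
Your decomposition $\sum_j I_{6n}(nz_j)=\sum_{X_i} h(X_i)$ and the Chernoff--Campbell estimate conditioned on $\Lambda$, together with the trick $e^{sh}-1\le s\,e^{sH}h$ under the uniform bound $\sum_j \ell_{6n}(|nz_j-\cdot|)\le H$, reproduce faithfully what the paper does in Step III of its proof (there the elementary bound $e^y-1\le 2y$ for $y\le 1$ with $s\le 1/K_0$ is used, but this is cosmetically different). Case (b) then closes exactly as you say. Your idea for case (a) — switch to a stabilization/finite-dependence argument instead of Chernoff — is also the paper's route, though organized differently: the paper first splits $I_{6n}$ into the contribution $I^{\mathrm{in}}_{6n}$ from $X^\lambda\cap\mathrm{Q}_{12n\sqrt d}(nz)$ and the tail $I^{\mathrm{out}}_{6n}$, proves the proposition for $I^{\mathrm{in}}_{6n}$ unconditionally by a renormalized site process ($n$-tame sites) that is $\lceil 12n\sqrt d+1\rceil$-dependent and invokes Liggett--Schonmann--Stacey, and then observes that under (a) one has $I_{6n}=I^{\mathrm{in}}_{6n}$ once $n$ is large. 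The two parts are finally recombined via Cauchy--Schwarz exactly as in Proposition~\ref{prop_C}.

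The genuine gap is case (c), which you yourself flag as the obstacle but for which your sketch does not contain the idea that makes the bound close. After Hölder your estimate produces something of the form $\mathbb{E}\bigl[\exp(c\,s\,N\,\Lambda(\mathrm{Q}_1))\bigr]$: under (c) this is finite only when $c\,s\,N<\alpha^*$, and since $s$ must be fixed before $N$ is known, the bound fails for $N$ large. Invoking ``stabilization to decouple the dependent unit cubes'' will not rescue this — the problem is not dependence but the growth of the exponent in $N$. The paper's fix is a scale-coarsening argument specific to case (c): one writes $N=kn$, replaces each site $z_i\in\mathbb{Z}^d$ by the point $z(z_i,k)\in k\mathbb{Z}^d$ whose $k$-cube contains it, and uses Lemma~\ref{lem_shot_noise} to show $I^{\mathrm{out}}_{6kn}(nz(z_i,k))\ge I^{\mathrm{out}}_{6n}(nz_i)$, so one may pass to the coarser lattice at scale $6kn$. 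There the decay hypothesis $\int_x^\infty r^{d-1}\ell(r)\,dr=\mathcal O(1/x)$ — which you do mention but never actually deploy — supplies an extra factor of order $1/n$ in the Hölder weights, so the effective exponent in front of $\Lambda(\mathrm{Q}_1)$ no longer grows with $N$. Since the coarsened points $z(z_i,k)$ need not be distinct, the paper proves Lemma~\ref{winter} (a bound $\sum_i\ell_{6n}(|x-nz_i|)\le C_1$ valid for \emph{repeated} far-away sites, again using the decay hypothesis) to justify re-applying the Hölder estimate. Without this rescaling device and Lemma~\ref{winter}, the exponential-moment control under (c) does not go through; moreover, since you do not perform the in/out split, your single Chernoff bound also has to account for the local Poisson count in $\mathrm{Q}_{12n\sqrt d}(nz)$, which is not controllable by $\mathbb{E}[\exp(\alpha\Lambda(\mathrm{Q}_1))]<\infty$ for a fixed $\alpha$ once $N$ is large — yet another reason the split into $I^{\mathrm{in}}$ (handled by stabilization and dependent percolation, not Chernoff) and $I^{\mathrm{out}}$ (handled by Chernoff) is not a mere organizational choice but a necessity for case (c).
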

The proof of the above proposition is very extensive and hence it is given in the separate Section \ref{sec_B} in the end of this chapter.
\begin{prop}\label{prop_C}
Under assumption (a), (b) or (c) in Theorem \ref{thm_phase_transition} the following holds:\\
For all sufficiently large $\lambda >0$, $n \in \mathbb{N}$ and $M >0$, there exists a constant $q_C <1$ such that for all $N \in\mathbb{N}$ and pairwise distinct sites $z_1,...,z_N \in \mathbb{Z}^d$ it holds
\begin{equation*}
\mathbb{P}(C_{n,M}(z_1)=0,...,C_{n,M}(z_N)=0)\leq q_C^N.
\end{equation*}
Moreover, for any $\epsilon >0$, there exist $\lambda$, $n$, $M$ large enough so that $q_C \leq \epsilon$.
\end{prop}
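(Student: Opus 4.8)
The plan is to exploit the product structure $C_{n,M}(z)=A_n(z)\cdot B_{n,M}(z)$: since $A_n$ and $B_{n,M}$ are indicators, a site $z$ is closed precisely when $A_n(z)=0$ or $B_{n,M}(z)=0$. Hence on the event $\{C_{n,M}(z_i)=0\text{ for all }i\}$ the index set $\{1,\dots,N\}$ splits, in a configuration-dependent way, into $S:=\{i:A_n(z_i)=0\}$ and its complement $S^c$, on which necessarily $B_{n,M}(z_i)=0$. Taking the union over all possible choices of $S$ and applying a union bound, I would reduce the estimate to
\begin{equation*}
\mathbb{P}(C_{n,M}(z_1)=0,\dots,C_{n,M}(z_N)=0)\le\sum_{S\subseteq\{1,\dots,N\}}\mathbb{P}\big(A_n(z_i)=0\ \forall\, i\in S,\ B_{n,M}(z_i)=0\ \forall\, i\in S^c\big).
\end{equation*}

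For each fixed $S$ the joint event is contained both in $\{A_n(z_i)=0\ \forall\, i\in S\}$ and in $\{B_{n,M}(z_i)=0\ \forall\, i\in S^c\}$, and since $z_1,\dots,z_N$ are pairwise distinct so are the two subfamilies. Thus Lemma \ref{lem_A} bounds the summand by $q_{\mathrm A}^{|S|}$ and Proposition \ref{prop_B} bounds it by $q_{\mathrm B}^{|S^c|}$; note that no independence between the $A$- and $B$-variables is required, only the minimum of these two estimates. Using $\min(a,b)\le\sqrt{ab}$ and then factorizing the resulting sum over subsets,
\begin{equation*}
\sum_{S}\min\big(q_{\mathrm A}^{|S|},q_{\mathrm B}^{|S^c|}\big)\le\sum_{S}q_{\mathrm A}^{|S|/2}\,q_{\mathrm B}^{|S^c|/2}=\big(\sqrt{q_{\mathrm A}}+\sqrt{q_{\mathrm B}}\big)^N,
\end{equation*}
so I would set $q_C:=\sqrt{q_{\mathrm A}}+\sqrt{q_{\mathrm B}}$, which is of the required form $q_C^N$.

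What remains is to arrange the parameters so that $q_C<1$, and for the last assertion so that $q_C\le\epsilon$. Here one must be careful with the order of the quantifiers coming from the two inputs: Lemma \ref{lem_A} requires $\lambda$ large, and then $n$ large, to make $q_{\mathrm A}$ small, whereas Proposition \ref{prop_B} holds for every $\lambda$ but needs $n$ large and then $M$ large to make $q_{\mathrm B}$ small. Given $\epsilon>0$, I would first fix $\lambda$ and $n$ large enough that $q_{\mathrm A}\le\epsilon^2/4$ (Lemma \ref{lem_A}), enlarging $n$ further if necessary so that the hypotheses of Proposition \ref{prop_B} also apply, and then choose $M$ large enough that $q_{\mathrm B}\le\epsilon^2/4$ (Proposition \ref{prop_B}); then $q_C=\sqrt{q_{\mathrm A}}+\sqrt{q_{\mathrm B}}\le\epsilon$, which in particular can be made strictly less than $1$.

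The argument is essentially combinatorial, and its only genuine difficulty is this coordination of parameters across the two lemmas — all of the probabilistic content has already been invested in Lemma \ref{lem_A} and Proposition \ref{prop_B}. Once Proposition \ref{prop_C} is in hand, percolation of the open sites follows from the standard Peierls (contour) argument, as in the discrete models treated earlier in the paper.
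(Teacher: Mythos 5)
Your proposal is correct, and it reaches exactly the paper's bound $q_{\mathrm C}=\sqrt{q_{\mathrm A}}+\sqrt{q_{\mathrm B}}$, but it does so by a cleaner mechanism than the paper's. The paper writes $1-C_{n,M}(z_i)\le(1-A_n(z_i))+(1-B_{n,M}(z_i))$, expands $\prod_i(1-C_{n,M}(z_i))$ into $2^N$ products indexed by binary sequences, applies a square-root (Jensen-type) estimate, and then invokes the \emph{independence} of $A_n(\cdot)$ and $B_{n,M}(\cdot)$ to factor the resulting expectations before plugging in Lemma~\ref{lem_A} and Proposition~\ref{prop_B}. You instead run a union bound over the random partition $S=\{i:A_n(z_i)=0\}$ (the same $2^N$ terms, phrased as events rather than expectations), observe that each term is trivially bounded by both $q_{\mathrm A}^{|S|}$ and $q_{\mathrm B}^{|S^c|}$, and use $\min(a,b)\le\sqrt{ab}$ together with the binomial theorem. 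This buys you something real: your argument never uses independence of the $A$- and $B$-variables. That claim, asserted in the paper's proof without justification, is in fact dubious --- $A_n(z)$ and $B_{n,M}(z)$ are both functionals of $X^\lambda$ on overlapping regions (the number of Cox points in $\text{Q}_{6n}(nz)$ influences both the connectivity event and the shot-noise $I_{6n}(nz)$), so they are generally correlated. Your version bypasses this entirely, which makes it the more robust argument. The parameter bookkeeping at the end (fix $\lambda$ and then $n$ to control $q_{\mathrm A}$, enlarge $n$ if needed for Proposition~\ref{prop_B}, then pick $M$ to control $q_{\mathrm B}$) is correctly ordered and matches what the statements of Lemma~\ref{lem_A} and Proposition~\ref{prop_B} actually permit.
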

\begin{proof}
Let $N \in  \mathbb{N}$ and $z_1,...,z_N\in \mathbb{Z}^d$ be pairwise distinct. We show the proof analogously as in \cite[page 559f.]{Dousse2006}, which is a bit different to the approach in \cite[page 86]{Tobias2019} but leads to the same result.
\medskip
\\ \indent
In order to simplify the notation, we write
$\tilde{A}_n(z_i):=1-A_n(z_i)$ and $\tilde{B}_{n,M}(z_i):=1-B_{n,M}(z_i)$ for all $i$. As $A_n(\cdot)$ and $B_{n,M}(\cdot)$ only take the value $0$ or $1$, we can easily see that the following holds for all $i$
\begin{equation*}
1-C_{n,M}(z_i)
= 1- A_n(z_i) B_{n,M}(z_i)
\leq
(1-A_n(z_i))+(1-B_{n,M}(z_i))
=
\tilde{A}_n(z_i)+\tilde{B}_{n,M}(z_i). 
\end{equation*}
\indent
Let $(k_i)_{i=1}^N$ be a binary sequence, i.e., $k_i \in \{0,1\}$ and denote by $K$ the set of the $2^N$ such sequences.
Then it follows
\begin{align*}
\mathbb{P}(C_{n,M}(z_1)=0,...,C_{n,M}(z_N)=0)
&=
\mathbb{P}(1-C_{n,M}(z_1)=1,...,1-C_{n,M}(z_N)=1)\\
&=
\mathbb{P}\Big(\prod\limits_{i=1}^N (1-C_{n,M}(z_i))=1\Big)\\
&=
\mathbb{E}\Big(\prod\limits_{i=1}^N (1-C_{n,M}(z_i))\Big)\\
&\leq
\mathbb{E}\Big(\prod\limits_{i=1}^N (\tilde{A}_n(z_i)+\tilde{B}_{n,M}(z_i))\Big)\\
&=
\sum\limits_{(k_i)_{i=1}^N\in K} \mathbb{E}\Big( \prod\limits_{\{i \colon k_i=0\}}\tilde{A}_n(z_i)\prod\limits_{\{i \colon k_i=1\}}\tilde{B}_{n,M}(z_i) \Big)\\
&\leq
\sum\limits_{(k_i)_{i=1}^N\in K}\sqrt{ \mathbb{E}\Big( \prod\limits_{\{i \colon k_i=0\}}\tilde{A}^2_n(z_i)\prod\limits_{\{i \colon k_i=1\}}\tilde{B}^2_{n,M}(z_i) \Big)}\\
&=
\sum\limits_{(k_i)_{i=1}^N\in K}\sqrt{ \mathbb{E}\Big( \prod\limits_{\{i \colon k_i=0\}}\tilde{A}_n(z_i)\prod\limits_{\{i \colon k_i=1\}}\tilde{B}_{n,M}(z_i) \Big)}\\
&=
\sum\limits_{(k_i)_{i=1}^N\in K}\sqrt{ \mathbb{E}\Big( \prod\limits_{\{i \colon k_i=0\}}\tilde{A}_n(z_i)\Big) \mathbb{E}\Big(\prod\limits_{\{i \colon k_i=1\}}\tilde{B}_{n,M}(z_i) \Big)}
.
\end{align*}
\indent
Thereby we used the Schwarz's inequality, the fact that $\tilde{A}_n(\cdot),\tilde{B}_{n,M}(\cdot) \in \{0,1\}$ and that the random variables $A_n(\cdot)$ and $B_{n,M}(\cdot)$ are independent, and hence as well  $\tilde{A}_n(\cdot)$ and $\tilde{B}_{n,M}(\cdot)$. 
All together with Lemma \ref{lem_A} and Proposition \ref{prop_B}, it follows for sufficiently large $n$ and $M$:
\begin{align*}
\mathbb{P}(C_{n,M}(z_1)=0,...,C_{n,M}(z_N)=0)
&\leq
\sum\limits_{(k_i)_{i=1}^N\in K}\sqrt{  \prod\limits_{\{i \colon k_i=0\}}q_\mathrm{A}\prod\limits_{\{i \colon k_i=1\}}q_\mathrm{B}}\\
&=
\sum\limits_{(k_i)_{i=1}^N\in K}  \prod\limits_{\{i \colon k_i=0\}}\sqrt{q_\mathrm{A}}\prod\limits_{\{i \colon k_i=1\}}\sqrt{q_\mathrm{B}}\\
&=
(\sqrt{q_\mathrm{A}}+\sqrt{q_\mathrm{B}})^N\\
&:=
q_\mathrm{C}^N.
\end{align*}
Note that $q_\mathrm{C} <1$ exists, as Proposition \ref{prop_B} allows us to choose a constant $q_\mathrm{B}$ with $q_\mathrm{B} < (1-\sqrt{q_\mathrm{A}})^2$ for large enough $n$ and $M$. \\
\end{proof}
As we have shown Proposition \ref{prop_C}, the percolation of the lattice follows from the usual Peierls argument. Consider the proof of the supercritical phase within the proof of Theorem \ref{haselnuss} for further details. 
\\ \
\subsubsection{Step 3: Percolation in the SINR graph}
In this step we show that an infinite connected component in our lattice implies an infinite connected component in the SINR model. 
Therefore, we need the following corollary.
\begin{cor}\label{cor_shot_noise}
For $a \geq 0$ it holds that 
$
I(x) \leq I_a(z)
$
for all $x \in \mathbb{R}^d$ and $z\in \text{Q}_a(x)$.
\end{cor}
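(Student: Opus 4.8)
The plan is to establish the inequality \emph{termwise}. Since $I(x)$ and $I_a(z)$ are both sums over the same configuration $X^\lambda$, namely $I(x) = \sum_{X_i \in X^\lambda} \ell(|x-X_i|)$ and $I_a(z) = \sum_{X_i \in X^\lambda} \ell_a(|z-X_i|)$, it suffices to show that $\ell(|x-X_i|) \le \ell_a(|z-X_i|)$ for each point $X_i \in X^\lambda$ and then to sum over $i$.

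First I would record the purely geometric observation underlying the shift in Definition \ref{def_shot}: since $\text{Q}_a(x) = x + [-a/2,a/2]^d$, every $z \in \text{Q}_a(x)$ satisfies $|z-x| \le \sqrt{d}\,(a/2) = a\sqrt{d}/2$, i.e.\ the half-diameter of the cube is exactly the amount by which $\ell$ is shifted in the definition of $\ell_a$. Now fix a point $X_i \in X^\lambda$ and split into the two cases of the definition of $\ell_a$. If $|z-X_i| < a\sqrt{d}/2$, then $\ell_a(|z-X_i|) = \ell(0)$; since $\ell$ is constant on $[0,d_0]$, strictly decreasing afterwards and nonnegative, one has $\ell(r) \le \ell(0)$ for every $r \ge 0$, so in particular $\ell(|x-X_i|) \le \ell(0) = \ell_a(|z-X_i|)$. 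If instead $|z-X_i| \ge a\sqrt{d}/2$, then $\ell_a(|z-X_i|) = \ell\!\left(|z-X_i| - a\sqrt{d}/2\right)$, and by the triangle inequality together with the geometric bound above, $|z-X_i| - a\sqrt{d}/2 \le |z-X_i| - |z-x| \le |x-X_i|$; since $\ell$ is (weakly) decreasing on $[0,\infty)$, this yields $\ell(|x-X_i|) \le \ell\!\left(|z-X_i| - a\sqrt{d}/2\right) = \ell_a(|z-X_i|)$.

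Summing this termwise inequality over all $X_i \in X^\lambda$ gives $I(x) \le I_a(z)$, which is the claim. There is no genuine obstacle here: the only two points requiring a moment's care are the elementary estimate $|z-x| \le a\sqrt{d}/2$ (half the diameter of a cube of side $a$) and applying the monotonicity of $\ell$ in the correct direction after the triangle inequality; the case $a = 0$ is trivial, since then $z = x$ and $\ell_0 = \ell$.
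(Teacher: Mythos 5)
Your proof is correct and follows essentially the same route as the paper's: the geometric observation $|z-x|\le a\sqrt{d}/2$ for $z\in\text{Q}_a(x)$, the triangle-inequality estimate $|z-X_i|-a\sqrt{d}/2\le|x-X_i|$, and a termwise comparison $\ell(|x-X_i|)\le\ell_a(|z-X_i|)$ split according to the two branches in the definition of $\ell_a$. The paper organizes this by inserting indicators inside the sum rather than fixing a single $X_i$ and arguing pointwise, but the content is identical.
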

\begin{proof}
First, note that for $z\in \text{Q}_a(x)$, it holds $|z-x| \leq \frac{a \sqrt{d}}{2}$. In other words, the maximal distance between any point in the cube $\text{Q}_{a}(x)$ and its center $x$ is given by $a\sqrt{d}/2$. 
Using this and the triangle inequility yields
\begin{align*}
|z-X_i| - \frac{a \sqrt{d}}{2}
&=
|z-x+x-X_i| - \frac{a \sqrt{d}}{2}\\
&\leq
|z-x|+|x-X_i| - \frac{a \sqrt{d}}{2}\\
&\leq
|x-X_i|.
\end{align*}
Hence it follows
\begin{align*}
I(x)
&=
\sum\limits_{X_i \in X^\lambda}\ell(|x-X_i|)\\
&=
\sum\limits_{X_i \in X^\lambda}
\Big(
\underbrace{\ell(|x-X_i|)}_{\leq \ell(0)} \cdot \mathds{1}\Big\{|z-X_i| < \frac{a \sqrt{d}}{2}\Big\}
+ 
\underbrace{\ell(|x-X_i|)}_{\leq \ell(|z-X_i|)} \cdot \mathds{1}\Big\{|z-X_i| \geq \frac{a \sqrt{d}}{2}\Big\}
\Big)\\
&\leq
\sum\limits_{X_i \in X^\lambda} \ell_a(|z-X_i|)
=
I_a(z).
\end{align*}
\end{proof}
Let us now choose $\lambda$, $n$ and $M$ such that there exists an infinite connected component $\mathcal{C}$ of sites $z$ in the lattice $\mathbb{Z}^d$. This means that for $z \in \mathcal{C}$, it holds $C_{n,M}(z) =1$ and hence by definition $A_n(z)=B_{n,M}(z)=1$. 
Remember that we have shown in Step 2 that it is possible to obtain such a infinite connected component.
\medskip
\\ \indent
Let $z$, $z' \in \mathcal{C}$ be two neighbouring sites in the lattice $\mathbb{Z}^d$, i.e., $|z-z'|=1$. First, we use that $z$ and $z'$ are $n$-good, as $A_n(z)=A_n(z')=1$. Hence, there exist by definition $X_i \in \text{Q}_n(nz)$ and $X_j \in \text{Q}_n(nz')$, which implies as well $X_i, X_j \in \text{Q}_{3n}(nz)$. By using this and again the definition of $n$-goodness, it follows that $X_i$ and $X_j$ are connected by a path in $g_r(X^\lambda) \cap\text{Q}_{6n}(nz)$. 
In other words, for each $z$ in the infinite connected component of our lattice, there exist two Cox points $X_i$ and $X_j$, which are at most $r$ away from each other.\medskip
\\ \indent
Moreover, we know that $B_{n,M}(z)=1$ for $z \in \mathcal{C}$, which means $I_{6n}(nz)\leq M$ by definition.
From Corollary \ref{cor_shot_noise} follows
$I(x) \leq I_{6n}(nz)$ for all $x \in \mathbb{R}$ and $nz \in \text{Q}_{6n}(x)$, and hence 
$I(x) \leq I_{6n}(nz)$ for all $x \in \text{Q}_{6n}(nz)$.
All together, it follows that 
$I(x) \leq M$ for all  $x \in \text{Q}_{6n}(nz)$. We use this upper bound for $x = X_i, X_j$.
Note thereby that 
\begin{equation*}
\sum\limits_{k \neq i,j}\ell(|X_k-x|)
= 
I(x)- \ell(|X_j-x|)-\ell(|X_j-x|)
<
I(x)
\leq M.
\end{equation*} 
Using all this, it holds that the points $X_i$ and $X_j$ are connected in $g_{(\gamma)}(X^\lambda)$, where $\gamma \in (0,\gamma')$ with
\begin{equation*}
\gamma'=
\frac{N_0}{PM}\Big(\frac{\ell(r)}{\ell(r_\mathrm{B})}-1\Big),
\end{equation*}
as the SINR is greater than the threshold $\tau$. This can be seen in the following equation, which holds for any $X_i$, $X_j \in X^\lambda \cap \text{Q}_{6n}(nz)$ with $|X_i-X_j| \leq r$
\begin{align*}
\frac{P \ell(|X_i-X_j|)}{N_0+ \gamma \sum\limits_{k \neq i,j}P \ell(|X_k-X_j|)}
>
\frac{P \ell(r)}{N_0+ \gamma P M}
>
\frac{P \ell(r)}{N_0+ \gamma' P M}
=
\frac{P \ell(r_\mathrm{B})}{N_0}
=
\tau.
\end{align*}
Note that the parameter $\gamma' $ is strictly positive as $d_0<r<r_\mathrm{B}$ and thus $\ell(r) >\ell(r_\mathrm{B})$.
Hence, the points $X_i$ and $X_j$ are connected in $g_{(\gamma)}(X^\lambda)$.
\medskip
\\ \indent
All together, we can conclude that the intersection $g_{(\gamma)}(X^\lambda)\cap \Big(\bigcup\limits_{z \in \mathcal{C}}\text{Q}_{6n}(nz)\Big)$ contains an infinite path and this implies that $g_{(\gamma)}(X^\lambda)$ percolates.
\\ \
\subsubsection{Interference control: Proof of Proposition \ref{prop_B}}\label{sec_B}
Let us split the interference term for any $x \in \mathbb{R}^d$ and $ n\geq1$ in two parts. The first one covers the interference at the point $nx$, which is generated from the Cox points located \textit{inside} the cube $\text{Q}_{12n\sqrt{d}}(nx)$. The second part represents the interference at $nx$, which comes from all the other Cox points \textit{outside} of the same cube. Hence, we obtain
\begin{equation*}
I_{6n}^{\text{in}}(nx) = \sum\limits_{X_i\in X^\lambda \cap \text{Q}_{12n\sqrt{d}}(nx)} \ell_{6n}(|X_i-nx|)
\,\,
\text{ and }
\,\,
I_{6n}^{\text{out}}(nx) = \sum\limits_{X_i\in X^\lambda \setminus \text{Q}_{12n\sqrt{d}}(nx)} \ell_{6n}(|X_i-nx|).
\end{equation*}
Note that  $I_{6n}(nx)= I_{6n}^{\text{in}}(nx) +I_{6n}^{\text{out}}(nx)$.
Then we define $B_{n,M}^{\text{in}}(z):=\mathds{1}\{I_{6n}^{\text{in}}(nz)\leq M\}$ and $B_{n,M}^{\text{out}}(z):=\mathds{1}\{I_{6n}^{\text{out}}(nz)\leq M\}$ and show Proposition \ref{prop_B} replacing all $B_{n,M}(\cdot)$ once by $B_{n,M}^{\text{in}}(\cdot)$ and once by $B_{n,M}^{\text{out}}(\cdot)$.
Thereby, the proof is structered in six steps. This is illustrated in the following figure:
\begin{center}
\includegraphics[width=0.6\textwidth]{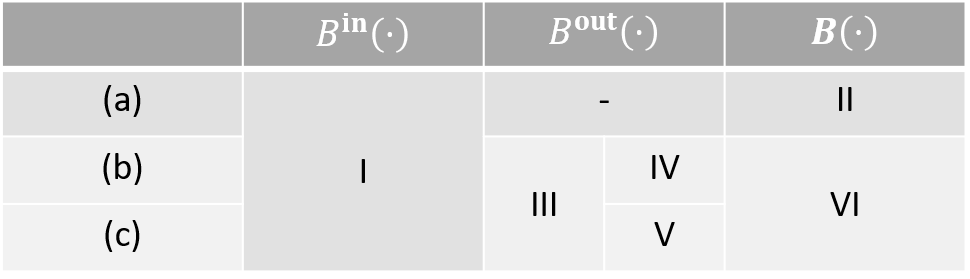}
\end{center}

The first left column describes the three different conditions of Theorem \ref{thm_phase_transition}. Remember that we aim to show that the SINR model percolates if one of the conditions holds true.
We start by proving the proposition for $B^{\text{in}}(\cdot)$. Within this proof, none of the conditions (a)-(c) are used and hence the proof holds under all three conditions. In the second step we conclude from Step I that the original proposition already holds under the condition of (a). Afterwards we proof the proposition for $B^{\text{out}}(\cdot)$. In the beginning, we consider the conditions (c) and (d) together - that is Step III - later on we need to consider them separatly. This is done in Step IV and V. In the last step we finish the proof by using Step I and III-V.\\

\noindent \textbf{Step I}\\
In order to show the proposition with $B_{n,M}^{\text{in}}(\cdot)$, we construct a renormalized percolation process and say a site $z \in \mathbb{Z}^d$ is \textit{n-tame} if
\begin{equation*}
R(\text{Q}_{12n\sqrt{d}}(nz))<n/2
\,\,\,\text{ and }\,\,\,
I_{6n}^{\text{in}}(nz)\leq M.
\end{equation*}
Otherwise, it is called \textit{n-wild}. 
\medskip
\\ \indent
From the definition of stabilization follows that the $n$-tame site process is $\lceil 12n\sqrt{d}+1\rceil$-dependent. By using dependent percolation theory (see \cite[Theorem 0.0]{Liggett1997}), it suffices to show that for all $\lambda>0$, the probability that $z$ is $n$-wild can be made arbitrarily close to zero uniformly in $z \in \mathbb{Z}^d$ by choosing first $n$ sufficiently large and then $M$ large enough accordingly. We have
\begin{align*}
\mathbb{P}(z \text{ is $n$-wild})
&=
\mathbb{P}(\text{Q}_{12n\sqrt{d}}(nz)\geq n/2 \text{ or }I_{6n}^{\text{in}}(nz)>M)\\
&\leq
\mathbb{P}(\text{Q}_{12n\sqrt{d}}(nz)\geq n/2)+\,\mathbb{P}(I_{6n}^{\text{in}}(nz)>M).
\end{align*} 
From the definition of stabilization it follows that $\mathbb{P}(R(\text{Q}_n)\geq n)$ tends to zero as $n \rightarrow \infty$. Hence the first term
$\mathbb{P}(\text{Q}_{12n\sqrt{d}}(nz)\geq n/2)$
can be made arbitrarily small by choosing $n$ large enough. Furthermore, for all $z \in \mathbb{Z}^d$ it holds
\begin{equation*}
I_{6n}^{\text{in}}(nz)
=
 \sum\limits_{X_i\in X^\lambda \cap \text{Q}_{12n\sqrt{d}}(nz)} \underbrace{\ell_{6n}(|X_i-nz|)}_{\leq \ell (0)}
\leq 
\ell(0) \,\#\{X^\lambda \cap \text{Q}_{12n\sqrt{d}}(nz)\}
\end{equation*}
and hence
\begin{align*}
\mathbb{E}[I_{6n}^{\text{in}}(nz)]
\leq
\ell(0) \,\lambda \,\mathbb{E}[\Lambda(\text{Q}_{12n\sqrt{d}}(nz))]
=
\ell(0) \,\lambda\, (12n\sqrt{d})^d
<\infty.
\end{align*}
Thus, for $n\geq1$, we can uniformly in $z \in \mathbb{Z}^d$ make $\mathbb{P}(I_{6n}^{\text{in}}(nz)>M)$ arbitrarily small by choosing $M$ large enough.
Hence, we have shown Proposition \ref{prop_B} for $B_{n,M}(\cdot)$ replaced by $B_{n,M}^{\text{in}}(\cdot)$.\\

\noindent \textbf{Step II}\\
From Step I follows easily that the original Proposition \ref{prop_B} with no replacements of $B_{n,M}(\cdot)$ holds under condition (a) of Theorem \ref{thm_phase_transition}, where $\ell$ has compact support. This implies that the support of $\ell$ is bounded, which means that $\sup(\text{supp } \ell )$ is finite. Hence we can conclude on the one hand that
$\ell(r)=0$ for $r>\sup(\text{supp } \ell )$ and by definition of the shifted version of $\ell$ it follows that
$\ell_{6n}(r)=0$ for  $r>\sup(\text{supp } \ell ) + 3n\sqrt{d}$.
On the other hand, we can conclude that $\sup(\text{supp } \ell ) + 3n\sqrt{d} \leq 12n\sqrt{d}$ for large enough $n$. This yields
\begin{align*}
I_{6n}^{\text{in}}(nz)
&\leq
I_{6n}(nz)\\
&=
\sum\limits_{X_i\in X^\lambda} \ell_{6n}(|X_i-nz|)
\\
&=
\sum\limits_{X_i\in X^\lambda \cap \text{Q}_{\sup(\text{supp } \ell ) + 3n\sqrt{d}}(nz)} \ell_{6n}(|X_i-nz|)
\\
&\leq
\sum\limits_{X_i\in X^\lambda \cap \text{Q}_{12n\sqrt{d}}(nz)} \ell_{6n}(|X_i-nz|)
\\
&=
I_{6n}^{\text{in}}(nz)
\end{align*}
for sufficiently large $n$ and hence the proposition follows under condition (a) of Theorem \ref{thm_phase_transition}.\\

\noindent \textbf{Step III}\\
We still need to show Proposition \ref{prop_B} for $B_{n,M}^{\text{out}}(\cdot)$ considering the conditions (b) or (c) of Theorem \ref{thm_phase_transition}. In the following, we assume without loss of generality that $\text{supp }\ell = [0,\infty)$.\\
First, we apply a general estimation:
\begin{align*}
\mathbb{P}(B_{n,M}^{\text{out}}(z_1)=0,...,B_{n,M}^{\text{out}}(z_N)=0)
&=
\mathbb{P}(I_{6n}^{\text{out}}(nz_1)>M,...,I_{6n}^{\text{out}}(nz_N)>M)\\
&\leq
\mathbb{P}\Big(\sum\limits_{i=1}^N I_{6n}^{\text{out}}(nz_i)>NM \Big).
\end{align*}
In the next step, we use the Marcov's inquality and obtain for any $s>0$
\\
$\mathbb{P}\Big(\sum\limits_{i=1}^N I_{6n}^{\text{out}}(nz_i)>NM \Big)$
\begin{equation}\label{exp_0}
\,\,\,\,\,\,\,\,\,\,\,\,\,
\leq
\exp(-sNM) \underbrace{ \mathbb{E} \Big[ \exp \Big( s \sum\limits_{i=1}^N \sum\limits_{X_k \in X^\lambda \setminus \text{Q}_{12n\sqrt{d}(nz_i)}} \ell_{6n}(|nz_i-X_k|) \Big) \Big]}_{(*)}.
\end{equation}
In order to estimate the expression $(*)$ further, we apply the form of the Laplace functional of a CPP to the function
$
f(x)= s \sum\limits_{i=1}^N\ell_{6n}(|nz_i-x|) \cdot \mathds{1}\{x\in \mathbb{R}^d \setminus \text{Q}_{12n\sqrt{d}}(nz_i)\} 
$
and obtain
\begin{equation}\label{exp_1}
(*)
=
\mathbb{E} \Big[ \exp \Big( \lambda \int\limits_{\mathbb{R}^d} \Big( \exp \Big (s \sum\limits_{i=1}^N\ell_{6n}(|nz_i-x|) \mathds{1}\{x\in \mathbb{R}^d \setminus \text{Q}_{12n\sqrt{d}}(nz_i)\} \Big)-1 \Big) \Lambda(dx)\Big) \Big].
\end{equation}
For further estimations of the expression (\ref{exp_1}) we use the following inequality
\begin{equation*}
\exp(y)-1 \leq 2y \,\,\, \text{ for all } \,\,\, y \leq 1.
\end{equation*}
Hence, we need to show
\begin{equation}\label{inequ}
s \sum\limits_{i=1}^N\ell_{6n}(|nz_i-x|) \cdot \mathds{1}\{x\in \mathbb{R}^d \setminus \text{Q}_{12n\sqrt{d}}(nz_i)\} \leq 1.
\end{equation} 
As the points $z_1,...,z_N$ are pairwise distinct by assumption, we can apply the following approximation
\begin{equation*}
 \sum\limits_{i=1}^N\ell_{6n}(|nz_i-x|)\cdot \mathds{1}\{x\in \mathbb{R}^d \setminus \text{Q}_{12n\sqrt{d}}(nz_i)\} 
 \leq  \sum\limits_{z\in n\mathbb{Z}^d}\ell_{6n}(|z-x|).
\end{equation*}
\indent
Furthermore, we denote the cube in $n\mathbb{Z}^d$ which contains $x\in \mathbb{R}^d$ by $P_x$.
Now the idea is to classify the $z \in n\mathbb{Z}^d$ according to their distance to $P_x$ in order to determine their maximal contribution to the above sum. More precisely we have
\begin{equation*}
n \mathbb{Z}^d = \bigcup\limits_{i=0}^\infty \{z \in n \mathbb{Z}^d \colon i \leq \text{dist}_\infty(z,P_x) < (i+1)\},
\end{equation*}
where 
$\text{dist}_p(\phi, \psi) = \inf \{ ||x-y||_p \colon x \in \phi, y \in \psi\}
$
denotes the $\ell^{p}$-distance between two sets $\phi, \psi \subset \mathbb{R}^d$ for $p \in [1,\infty]$.
Then, we can easily see that for any $i \in \mathbb{N}$ and $\tilde{z} \in \{z \in n \mathbb{Z}^d \colon i \leq \text{dist}_\infty(z,P_x) < (i+1)\}$ it holds
$|\tilde{z}-x| \geq ni$ and hence
$\ell_{6n}(|\tilde{z}-x|) \leq \ell_{6n}(ni)$.
This yields
\begin{align*}
 \sum\limits_{z \in n\mathbb{Z}^d}\ell_{6n}(|z-x|) 
&\leq
 \sum\limits_{i=0}^\infty \#\{z\in n\mathbb{Z}^d \colon i\leq \text{dist}_\infty(z,P_x)<(i+1)\}\,\cdot \, \ell_{6n}(ni)\\
&=
 \sum\limits_{i=0}^\infty ((2i+2)^d-(2i)^d)\,\cdot \, \ell_{6n}(ni)\\
&:=K(n) \in (0,\infty].
\end{align*}
Let us now have a closer look at the expression $K(n)$.
It holds for any $n \in \mathbb{N}$ and $i \geq 6\sqrt{d}/2$ clearly that $ni \geq 6n\sqrt{d}/2$ and hence
\begin{equation*}
\ell_{6n}(ni) 
= 
\ell(ni-6n\sqrt{d}/2)
=
\ell(n(i-6\sqrt{d}/2))
\leq
\ell(i-6\sqrt{d}/2)
,
\end{equation*}
as $\ell(\cdot)$ is monotone decreasing. Furthermore, by the assumptions (i)-(ii) on $\ell$ it is clear that $\ell_{6n}(\cdot)\leq 1$ and hence
\begin{equation*}
K(n)
\leq
 \sum\limits_{i=0}^{\lceil 6\sqrt{d}/2 \rceil} ((2i+2)^d-(2i)^d) + \sum\limits_{i=\lceil 6\sqrt{d}/2 \rceil}^\infty ((2i+2)^d-(2i)^d)\cdot \ell(i-6\sqrt{d}/2)
:=K_0.
\end{equation*}
We can conclude
\begin{equation}\label{sommer}
 \sum\limits_{z \in n\mathbb{Z}^d}\ell_{6n}(|z-x|) 
\leq
K_0.
\end{equation}
By assumption (c) on $\ell$ it follows that $K_0$ is finite and hence inequality (\ref{inequ}) holds for $s\leq 1/K_0$ and we have \\

$ 
\exp \Big (s \sum\limits_{i=1}^N\ell_{6n}(|nz_i-x|) \cdot \mathds{1}\{x\in \mathbb{R}^d \setminus \text{Q}_{12n\sqrt{d}}(nz_i)\} \Big)-1 
$
\begin{equation*}
\,\,\, \, \,\,\, \,\,\,\, \,
\leq
2 \cdot s \sum\limits_{i=1}^N\ell_{6n}(|nz_i-x|) \cdot \mathds{1}\{x\in \mathbb{R}^d \setminus \text{Q}_{12n\sqrt{d}}(nz_i)\}
.
\end{equation*}
By plugging this into equation (\ref{exp_1}), we obtain
\begin{align*}
(*)
&\leq
\mathbb{E}\Big[\exp \Big(2 \lambda s \sum\limits_{i=1}^N \int\limits_{\mathbb{R}^d \setminus \text{Q}_{12n\sqrt{d}}(nz_i)} \ell_{6n}(|nz_i-x|) \Lambda(dx)\Big)\Big]\\
&=
\mathbb{E}\Big[ \prod\limits_{i=1}^N \exp \Big(2 \lambda s\int\limits_{\mathbb{R}^d \setminus \text{Q}_{12n\sqrt{d}}(nz_i)} \ell_{6n}(|nz_i-x|) \Lambda(dx)\Big)\Big]\\
&=
\mathbb{E}\Big[ \prod\limits_{i=1}^N Y^{1/N}_i\Big]
,
\end{align*}
\\
where we introduced the random variables
\begin{equation*}
Y_i := \exp \Big(2 \lambda sN\int\limits_{\mathbb{R}^d \setminus \text{Q}_{12n\sqrt{d}}(nz_i)} \ell_{6n}(|nz_i-x|) \Lambda(dx)\Big)
\,\,\, \text{ for } i=1,...,N.
\end{equation*}
As these random variables are identically distributed, we can apply the following extended version of Hölder's inequality
\begin{equation*}
\mathbb{E} \Big[ \prod\limits_{i=1}^\infty Y_i^{p_i}\Big]
\leq
\mathbb{E}[Y_1], \,\,\, \,\,\, \text{ where } p_i\geq 0 \,\,\, \text{ for all } \, i \in \mathbb{N}\,\,\, \text{ and } \sum\limits_{i=1}^\infty p_i=1
\end{equation*}
for $p_i=1/N$ for $i=1,...,N$ and $p_i=0$ elsewhere. Note that for $i>N$, we can simply define $Y_i=1$, which yields
\begin{equation*}
\mathbb{E} \Big[ \prod\limits_{i=1}^N Y_i^{1/N}\Big]
=
\mathbb{E} \Big[ \prod\limits_{i=1}^\infty Y_i^{1/N}\Big].
\end{equation*}
All together we have the following further approximation
\begin{equation*}
(*)
\leq
\mathbb{E}\Big[\exp \Big(2 \lambda sN\int\limits_{\mathbb{R}^d \setminus \text{Q}_{12n\sqrt{d}}} \ell_{6n}(|x|) \Lambda(dx)\Big)\Big]
\end{equation*}
and hence\\

$\mathbb{P}(B_{n,M}^{\text{out}}(z_1)=0,...,B_{n,M}^{\text{out}}(z_N)=0)$
\begin{equation}\label{juhu}
\,\,\, \,
\leq
\exp(-NMs) \,
\mathbb{E}\Big[\exp \Big(2 \lambda sN\int\limits_{\mathbb{R}^d \setminus \text{Q}_{12n\sqrt{d}}} \ell_{6n}(|x|) \Lambda(dx)\Big)\Big].
\end{equation}
\\
\textbf{Step IV}\\
Now, we consider the cases (b) and (c)  in Theorem \ref{thm_phase_transition} separatly. First, we consider case (b), where $\Lambda(\text{Q}_1)$ is almost surely bounded. Therefore we can easily see that inequality (\ref{juhu}) implies\\

$\mathbb{P}(B_{n,M}^{\text{out}}(z_1)=0,...,B_{n,M}^{\text{out}}(z_N)=0)$
\begin{equation}
\,\,\, \,
\leq
\exp(-NMs) \,
\mathbb{E}\Big[\exp \Big(2 \lambda sN\int\limits_{\mathbb{R}^d} \ell_{6n}(|x|) \Lambda(dx)\Big)\Big].
\end{equation}
In the following we claim that there exists $W\geq0$ such that
\begin{equation*}
\limsup\limits_{N \rightarrow \infty} \frac{1}{N}\log \mathbb{E}\Big[\exp \Big(2 \lambda sN\int\limits_{\mathbb{R}^d} \ell_{6n}(|x|) \Lambda(dx)\Big)\Big]
\leq
2\lambda s W.
\end{equation*}
Let therefore be $\{\text{Q}^i\}_{i=1}^\infty$ a subdivision of $\mathbb{R}^d$ into congruent copies of $\text{Q}_1$ (up to the boundaries). Remember that  $\text{Q}_1$ is the cube with side length $1$, centered at the origin. Then we can estimate
\begin{align*}
\mathbb{E}\Big[\exp \Big(2 \lambda sN\int\limits_{\mathbb{R}^d} \ell_{6n}(|x|) \Lambda(dx)\Big)\Big]
&\leq
\mathbb{E}\Big[\exp \Big(2 \lambda sN\sum\limits_{i=1}^\infty \max\limits_{x\in \text{Q}^i} \ell_{6n}(|x|) \Lambda(\text{Q}^i)\Big)\Big]\\
&=
\mathbb{E}\Big[\prod\limits_{i=1}^\infty\exp \Big(2 \lambda sN \max\limits_{x\in \text{Q}^i} \ell_{6n}(|x|) \Lambda(\text{Q}^i)\Big)\Big]\\
&=
\mathbb{E}\Big[\prod\limits_{i=1}^\infty\exp \Big(2 \lambda sN \Big(\sum\limits_{j=1}^\infty\max\limits_{x\in \text{Q}^j} \ell_{6n}(|x|)\Big) \Lambda(\text{Q}^i)\Big)^{p_i}\Big],
\end{align*}
where we used $p_i=\max\limits_{x\in \text{Q}^i} \ell_{6n}(|x|) / \sum\limits_{j=1}^\infty\max\limits_{x\in \text{Q}^j} \ell_{6n}(|x|)$. Note that $\sum\limits_{i=1}^\infty\max\limits_{x\in \text{Q}^i} \ell_{6n}(|x|)$ is finite by assumption on $\ell$. As $p_i \geq 0$ and 
$\sum\limits_{i=1}^\infty p_i=1$, we can again apply the extended Hölder's inequality from above and obtain
\begin{align*}
\mathbb{E}\Big[\exp \Big(2 \lambda sN\int\limits_{\mathbb{R}^d} \ell_{6n}(|x|) \Lambda(dx)\Big)\Big]
&\leq
\mathbb{E}\Big[\exp \Big(2 \lambda sN  \Lambda(\text{Q}_1)\sum\limits_{j=1}^\infty \max\limits_{x\in \text{Q}^j} \ell_{6n}(|x|)\Big)\Big].
\end{align*}
By definition it holds
$\text{esssup } \Lambda(\text{Q}_1) = \inf \{t>0 \colon \mathbb{P} (\Lambda(\text{Q}_1)<t)=1\}$
and hence $ \Lambda(\text{Q}_1) \leq\text{esssup } \Lambda(\text{Q}_1)$ almost surely. This yields
\begin{align*}
\mathbb{E}\Big[\exp \Big(2 \lambda sN\int\limits_{\mathbb{R}^d} \ell_{6n}(|x|) \Lambda(dx)\Big)\Big]
&\leq
\exp \Big(2 \lambda sN  \text{esssup} \Lambda(\text{Q}_1)\sum\limits_{j=1}^\infty \max\limits_{x\in \text{Q}^j} \ell_{6n}(|x|)\Big).
\end{align*}
In the next step, we take $\lim \sup\limits_{N\rightarrow \infty} \frac{1}{N} \log$ on both sides and have
\begin{align*}
\limsup\limits_{N \rightarrow \infty} \frac{1}{N}\log 
\Big(
\mathbb{E}\Big[\exp \Big(2 \lambda sN\int\limits_{\mathbb{R}^d} \ell_{6n}(|x|) \Lambda(dx)\Big)\Big]
\Big)
&\leq
2 \lambda s \text{ esssup} \Lambda(\text{Q}_1) \sum\limits_{j=1}^\infty \max\limits_{x\in \text{Q}^j} \ell_{6n}(|x|) .
\end{align*}
Hence, by choosing
$W= \text{ esssup} \Lambda(\text{Q}_1) \sum\limits_{j=1}^\infty \max\limits_{x\in \text{Q}^j} \ell_{6n}(|x|) $, which is finite as $\Lambda(\text{Q}_1)$ is bounded, the claim holds. All in all we get
\begin{equation*}
\mathbb{E}\Big[\exp \Big(2 \lambda sN\int\limits_{\mathbb{R}^d} \ell_{6n}(|x|) \Lambda(dx)\Big)\Big]
\leq
\exp(2\lambda s N (W+ \mathcal{O}(1)))
\end{equation*}
and hence
\begin{align*}
\mathbb{P}(B_{n,M}^{\text{out}}(z_1)=0,...,B_{n,M}^{\text{out}}(z_N)=0)
&\leq
\exp(2\lambda s N (W+ \mathcal{O}(1))) \cdot \exp(-NMs) 
\\
&=
(\exp(2\lambda s (W+ \mathcal{O}(1))-Ms))^N.
\end{align*}
\indent
Thus, we have found
$q_\mathrm{B}:=\exp(3\lambda s W-Ms)$ for which the inequality in Proposition \ref{prop_B} holds in case (b) of Theorem  \ref{thm_phase_transition}. Furthermore, for any $\lambda >0$, $q_\mathrm{B}$ can be made arbitrarily small by choosing $M>0$ large enough. \\

\noindent\textbf{Step V}\\
We still need to show that the proposition holds in case (c) of Theorem \ref{thm_phase_transition}, which means in case that we have
$\mathbb{E}[\exp(\alpha\Lambda(\text{Q}_1))]<\infty$ for some $\alpha >0$ and $\int\limits_x^\infty r^{d-1}\ell(r) dr = \mathcal{O}\big(\frac{1}{x}\big)$ as $x \rightarrow \infty$. 
Therefore, we need the following lemma.
\begin{lem}\label{lem_shot_noise}
For $n \geq 1$, $a \geq n$, $k \in \mathbb{N}$, $z \in k \cdot \mathbb{Z}^d$ the following inequality holds almost surely for all 
$y \in \text{Q}_k(z) \cap \mathbb{Z}^d$ and measurable $S \subset \mathbb{R}^d$:
\begin{equation*}
\sum\limits_{X_j \in X^\lambda \cap S} \ell_{ka}(|nz-X_j|)
\geq
\sum\limits_{X_j \in X^\lambda \cap S} \ell_{a}(|ny-X_j|).
\end{equation*}
\end{lem}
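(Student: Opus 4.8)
The plan is to reduce the lemma to a deterministic, pointwise estimate. Since both sums run over the \emph{same} random configuration $X^\lambda\cap S$ and $\ell_a,\ell_{ka}\ge 0$, it suffices to prove that for \emph{every} $x\in\mathbb{R}^d$,
\begin{equation*}
\ell_{ka}(|nz-x|)\ \ge\ \ell_a(|ny-x|);
\end{equation*}
summing this over $x=X_j\in X^\lambda\cap S$ then gives the claim, and in fact shows the statement holds surely, not merely almost surely.

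First I would put the shifted path-loss function into the closed form
\begin{equation*}
\ell_b(r)=\ell\Bigl(\max\bigl\{r-\tfrac{b\sqrt d}{2},\,0\bigr\}\Bigr),\qquad b\ge 0,
\end{equation*}
which is immediate from Definition~\ref{def_shot} by treating $r<\tfrac{b\sqrt d}{2}$ and $r\ge\tfrac{b\sqrt d}{2}$ separately, and recall that $\ell$ is non-increasing on $[0,\infty)$ (constant on $[0,d_0]$, strictly decreasing afterwards on its support, then $0$, by condition~(a)). Using the monotonicity of $\ell$, the desired inequality follows once
\begin{equation*}
\max\bigl\{|nz-x|-\tfrac{ka\sqrt d}{2},\,0\bigr\}\ \le\ \max\bigl\{|ny-x|-\tfrac{a\sqrt d}{2},\,0\bigr\},
\end{equation*}
and since $t\mapsto\max\{t,\,0\}$ is non-decreasing it is enough to compare the two arguments of the positive parts.

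For that comparison I would use the triangle inequality $|nz-x|\le n|z-y|+|ny-x|$ together with the geometric fact that $y\in\text{Q}_k(z)$ forces every coordinate of $y-z$ into $[-\tfrac k2,\tfrac k2]$, hence $|z-y|\le\tfrac{\sqrt d}{2}\,k$. This yields
\begin{equation*}
|nz-x|-\tfrac{ka\sqrt d}{2}\ \le\ |ny-x|-\tfrac{a\sqrt d}{2}+\tfrac{\sqrt d}{2}\bigl(nk-(k-1)a\bigr),
\end{equation*}
so it remains to see that $nk-(k-1)a\le 0$. When $k=1$ this is vacuous, since $\text{Q}_1(z)\cap\mathbb{Z}^d=\{z\}$ forces $y=z$, $\ell_{ka}=\ell_a$, $nz=ny$, and both sides of the lemma coincide; for $k\ge 2$ the bound $nk\le(k-1)a$ is elementary from the hypotheses $n\ge 1$, $a\ge n$, $k\in\mathbb{N}$ (one may, if desired, invoke the sharper estimate $|z-y|\le\tfrac{\sqrt d}{2}(k-1)$ available when $k$ is odd). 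Taking positive parts and then applying the monotonicity of $\ell$ completes the proof.

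The lemma is elementary and presents no real obstacle. The two points that need a little care are (i) rewriting the piecewise-defined $\ell_b$ as $\ell(\max\{r-b\sqrt d/2,\,0\})$ and invoking the monotonicity of $\ell$ in the correct direction, and (ii) the bookkeeping with the factors of $\sqrt d$ and the sign of $nk-(k-1)a$ — in particular treating $k=1$ separately.
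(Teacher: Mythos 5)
Your reduction to a deterministic pointwise estimate, and the rewriting $\ell_b(r)=\ell\bigl(\max\{r-b\sqrt d/2,\,0\}\bigr)$ that collapses the paper's explicit case split into monotonicity of $\ell\circ\max\{\cdot,0\}$, are correct and genuinely cleaner than the paper's presentation. The gap is in the final arithmetic. You use $|z-y|\le k\sqrt d/2$, which reduces the claim to $nk\le(k-1)a$, and assert that for $k\ge 2$ this is elementary from $n\ge1$ and $a\ge n$. It is not: with $n=a=1$ and $k=2$ (admitted by $a\ge n$) one has $nk=2>1=(k-1)a$. More generally $nk\le(k-1)a$ is equivalent to $a\ge n\,\tfrac{k}{k-1}$, which is strictly stronger than the stated hypothesis $a\ge n$. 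The inequality happens to hold in the paper's application (where $a=6n$), but your argument does not establish the lemma as stated.

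The fix is the sharper bound $|z-y|\le(k-1)\sqrt d/2$, which you mention only parenthetically and misdescribe as optional and available only for odd $k$. In fact it holds for every $k\ge1$ and is precisely what the proof needs. Since $z\in k\mathbb Z^d$ and $y\in\mathbb Z^d$, each coordinate $y_i-z_i$ is an integer; the paper's proof explicitly reads $Q_k(z)$ as the cube without its boundary at this point, so $y_i-z_i$ lies in the open interval $(-k/2,k/2)$, whence $|y_i-z_i|\le(k-1)/2$ for both parities of $k$ (for even $k$ one even gets $k/2-1<(k-1)/2$). Using this and $a\ge n$ to write $n|z-y|\le a|z-y|\le a(k-1)\sqrt d/2$, you obtain
\begin{equation*}
|nz-x|-\tfrac{ka\sqrt d}{2}\ \le\ n|z-y|+|ny-x|-\tfrac{ka\sqrt d}{2}\ \le\ |ny-x|-\tfrac{a\sqrt d}{2},
\end{equation*}
which is exactly the conclusion of the paper's two-case argument. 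So your approach is essentially the paper's; the one correction needed is to use the $(k-1)\sqrt d/2$ lattice bound on $|z-y|$ and to spend the hypothesis $a\ge n$ there, rather than on the inequality $nk\le(k-1)a$, which can fail.
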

\begin{proof}
By definition it holds
\begin{equation*}
\ell_{ka}(|nz-x|)
=
\begin{cases}
\ell(0) 							& \text{if } |nz-x|<ka\sqrt{d}/2\\
\ell(|nz-x|-ka\sqrt{d}/2)	& \text{elsewhere}
\end{cases}
\end{equation*}
and 
\begin{equation*}
\ell_{a}(|ny-x|)
=
\begin{cases}
\ell(0) & \text{if } |ny-x|<a\sqrt{d}/2\\
\ell(|ny-x|-a\sqrt{d}/2)	& \text{elsewhere.}
\end{cases}
\end{equation*}
\indent
As the path-loss function $\ell$ is decreasing, we first need to show that 
$|ny-x|<a\sqrt{d}/2$  implies 
$|nz-x|<ka\sqrt{d}/2$. Note that
$|z-y|\leq(k-1)\sqrt{d}/2$ as $y$ is an integer number and $\text{Q}_k(z)$ is the cube without boundaries.
Hence, by the triangle inequality follows
\begin{equation*}
|nz-x|
\leq
|nz-ny|+|ny-x|
\leq
a \cdot|z-y|+|ny-x|
<
a(k-1)\sqrt{d}/2+a\sqrt{d}/2
=
ka\sqrt{d}/2.
\end{equation*}
Thus, it is clear that 
$\ell_{ka}(|nz-x|)\geq \ell_{a}(|ny-x|)$
as long as $|ny-x|<a\sqrt{d}/2$. \medskip
\\ \indent
Now we consider the case where $|ny-x|\geq a\sqrt{d}/2$. As $\ell$ is decreasing, the inequality holds as well as long as $|nz-x|<ka\sqrt{d}/2$. Hence, we still need to consider the case $|nz-x|\geq ka\sqrt{d}/2$, which implies $|ny-x|\geq a\sqrt{d}/2$, and need to show that 
\begin{equation}\label{sunrise_lem}
\ell(|nz-x|-ka\sqrt{d}/2) \geq\ell(|ny-x|-a\sqrt{d}/2)
\end{equation}
holds true. Again, as $\ell$ is a decreasing function, equation (\ref{sunrise_lem}) is equivalent to
$|nz-x|-ka\sqrt{d}/2 \leq |ny-x|-a\sqrt{d}/2$. This can be shown in a similar way as above:
\begin{align*}
|nz-x|-ka\sqrt{d}/2
&\leq
|nz-ny|+|ny-x|-ka\sqrt{d}/2\\
&\leq
a \cdot (k-1)\sqrt{d}/2+|ny-x|-ka\sqrt{d}/2\\
&=
|ny-x|-a\sqrt{d}/2.
\end{align*}
All together we have shown for any $X_j$ in $X^\lambda \cap S$ that
$\ell_{ka}(|nz-X_j|)
\geq
\ell_{a}(|ny-X_j|)$
and hence the assertion follows.\\
\end{proof}
Let us now continue with equation (\ref{juhu}) from before:
\\

$\mathbb{P}(B_{n,M}^{\text{out}}(z_1)=0,...,B_{n,M}^{\text{out}}(z_N)=0)$
\begin{equation}\label{star_equ}
\,\,\,
\leq
\exp(-NMs)
\underbrace{\mathbb{E}\Big[\exp \Big(2 \lambda sN\int\limits_{\mathbb{R}^d \setminus \text{Q}_{12n\sqrt{d}}} \ell_{6n}(|x|) \Lambda(dx)\Big)\Big]}_{(*)}.
\end{equation}
Again, we aim to estimate $(*)$ by an upper bound. We start by extending the integration domain from
$\mathbb{R}^d \setminus \text{Q}_{12n\sqrt{d}}$ to $\mathbb{R}^d \setminus \text{Q}_{\lfloor12n\sqrt{d}\rfloor}$ and subdividing it into the sets
$A^i=\text{Q}_{\lfloor12n\sqrt{d}\rfloor +2i+2} \setminus \text{Q}_{\lfloor12n\sqrt{d}\rfloor+2i}$ for $i \in \mathbb{N}_0$. This means
\begin{equation*}
\mathbb{R}^d \setminus \text{Q}_{\lfloor12n\sqrt{d}\rfloor}
=
\bigcup\limits_{i=0}^\infty A^i.
\end{equation*}
Each $A^i$ is covered by the union of 
$(\lfloor12n\sqrt{d}\rfloor +2i+2)^d -( \lfloor12n\sqrt{d}\rfloor +2i)^d$ 
congruent copies of $\Lambda(\text{Q}_1)$ and the number of copies equals $\text{Leb}(A^i)$.
Moreover, for any $i \in \mathbb{N}_0$ and for all $x \in A^i$ it holds
$
|x| 
\geq
\lfloor12n\sqrt{d}\rfloor +2i
$
and hence 
\begin{equation*}
\ell_{6n}(|x|) 
\leq
\ell_{6n}\big(\lfloor12n\sqrt{d}\rfloor +2i\big) 
=
\ell\Big(\lfloor12n\sqrt{d}\rfloor +2i - \frac{6n\sqrt{d}}{2}\Big) 
=
\ell\big(\lfloor12n\sqrt{d}\rfloor +2i- 3n\sqrt{d}\big) 
.
\end{equation*}
Furthermore, we have
$\lfloor12n\sqrt{d}\rfloor + i \geq 12n\sqrt{d}$ as $i \geq 1$. This yields
\begin{equation*}
\lfloor12n\sqrt{d}\rfloor +2i- 3n\sqrt{d}
\geq
12n\sqrt{d} +i-3n\sqrt{d}
=
9n\sqrt{d} +i
\geq
2n\sqrt{d} +i,
\end{equation*}
and hence it holds that
$\ell_{6n}(|x|)  \leq \ell(2n\sqrt{d} +i)$.
We obtain
\begin{align*}
(*)
&\leq
\mathbb{E}\Big[\exp \Big(2 \lambda sN\int\limits_{\mathbb{R}^d \setminus \text{Q}_{\lfloor12n\sqrt{d}\rfloor}} \ell_{6n}(|x|) \Lambda(dx)\Big)\Big]\\
&=
\mathbb{E}\Big[\exp \Big(2 \lambda sN\sum\limits_{i=0}^\infty \int\limits_{A^i} \ell_{6n}(|x|) \Lambda(dx)\Big)\Big]
\\
&\leq
\mathbb{E}\Big[\exp \Big(2 \lambda sN\sum\limits_{i=0}^\infty \int\limits_{A^i}\ell(2n\sqrt{d} +i) \Lambda(dx)\Big)\Big]
\\
&=
\mathbb{E}\Big[\exp \Big(2 \lambda sN\sum\limits_{i=0}^\infty \ell(2n\sqrt{d} +i) \Lambda(A^i)\Big)\Big]
\\
&=
\mathbb{E}\Big[\prod\limits_{i=0}^\infty\exp \Big(2 \lambda sN \Lambda(A^i) \sum\limits_{j=1}^\infty\ell(2n\sqrt{d} +j) \text{Leb}(A^j) \Big)^{p_i}\Big],
\end{align*}
where
$p_i = \ell(2n\sqrt{d} +i) / \sum\limits_{j=0}^\infty\ell(2n\sqrt{d} +j)\text{Leb}(A^j)$. Thus, we can again apply the extended Hölder's inequality from above and get
\begin{align*}
(*)
&\leq
\mathbb{E}\Big[\exp \Big(2 \lambda sN \Lambda(A^1) \sum\limits_{j=0}^\infty\ell(2n\sqrt{d} +j)  \text{Leb}(A^j)\Big)\Big]
\\
&=
\mathbb{E}\Big[\exp \Big(2 \lambda sN  \sum\limits_{j=0}^\infty\ell(2n\sqrt{d} +j)  \text{Leb}(A^j)\Lambda(\text{Q}_1)\Big)\Big].
\end{align*}
\newpage
\noindent
For $j \in \mathbb{N}_0$, we have
$\text{Leb}(A^j )
\leq
d(\lfloor 12n\sqrt{d} \rfloor+2j+2)^{d-1}
\leq
d(14n\sqrt{d}+2j+2 )^{d-1}
\leq
d \, 7^{d-1}(2n\sqrt{d}+j)^{d-1}
$
and hence
\begin{equation*}
(*)
\leq
\mathbb{E}\Big[\exp \Big(2 \lambda sN d \, 7^{d-1}\sum\limits_{j=0}^\infty\ell(2n\sqrt{d} +j) (2n\sqrt{d}+j)^{d-1}\Lambda(\text{Q}_1)\Big)\Big].
\end{equation*}
By the constraint on $\ell$ in case (c) of Theorem \ref{thm_phase_transition} we know that the sum 
\begin{equation*}
\sum\limits_{j=0}^\infty\ell(2n\sqrt{d} +j) (2n\sqrt{d}+j)^{d-1}
=\sum\limits_{j=2n\sqrt{d}}^\infty\ell(j) j^{d-1}
\end{equation*}
tends to $\mathcal{O}(1/n)$ as $n \rightarrow \infty$.
Together with equation (\ref{star_equ}), it follows
\begin{equation}\label{superestimate}
\mathbb{P}(B_{n,M}^{\text{out}}(z_1)=0,...,B_{n,M}^{\text{out}}(z_N)=0)
\leq
\exp(-NMs)
\mathbb{E}\Big[ \exp\Big(Cs\frac{N}{n}\Lambda(\text{Q}_1)\Big) \Big]
\in [0,\infty]
\end{equation}
for a suitable $C>0$, given that $s \leq 1/K_0$.\medskip
\\ \indent
In the next step of our proof we set $N=kn$ for $k, n \in\mathbb{N}$.
Furthermore, for any $z' \in\mathbb{Z}^d$ and $k\in\mathbb{N}$ we define $z:=z(z',k)$ by the point for which holds that it is contained in $k\mathbb{Z}^d$ and 
$z' \in \text{Q}_k(z)$. 
Then, we apply Lemma \ref{lem_shot_noise} for each $i \in \{1,...,kn\}$. The lemma has a few conditions, which are all fulfilled as $n \geq 1$, $6n\geq n$, $k \in \mathbb{N}$, $z(z_i,k) \in k \mathbb{Z}^d$, $z_i \in \text{Q}_k(z(z_i,k)) \cap \mathbb{Z}^d$ and $\mathbb{R}^d \setminus \text{Q}_{12n\sqrt{d}}(nz_i) \subset \mathbb{R}^d$ is measurable. Hence, it follows
\begin{equation*}
\sum\limits_{X_j \in X^\lambda \setminus \text{Q}_{12n\sqrt{d}}(nz_i) }
\ell_{6kn}(|nz(z_i,k)-X_j|) 
\geq
\sum\limits_{X_j \in X^\lambda \setminus \text{Q}_{12n\sqrt{d}}(nz_i) }
\ell_{6n}(|nz_i-X_j|).
\end{equation*}
By our definition, this is equivalent to
$
I^{\text{out}}_{6kn}(nz(z_i,k))
\geq
I^{\text{out}}_{6n}(nz_i)
$. It follows
\begin{equation*}
\mathbb{P}(I^{\text{out}}_{6n}(nz_1)>M,...,I^{\text{out}}_{6n}(nz_{kn})>M)
\leq
\mathbb{P}(I^{\text{out}}_{6kn}(nz(z_1,k))>M,...,I^{\text{out}}_{6kn}(nz(z_{kn},k))>M),
\end{equation*}
respectively
\\

$\mathbb{P}(B_{n,M}^{\text{out}}(z_1)=0,...,B_{n,M}^{\text{out}}(z_N)=0)$
\begin{equation}\label{herbst}
\,\,\,
\leq
\mathbb{P}(B_{n,M}^{\text{out}}(z(z_1,k))=0,...,B_{n,M}^{\text{out}}(z(z_{kn},k))=0).
\end{equation}
\indent
We now aim to apply equation (\ref{superestimate}) for the points $z(z_1,k),...,z(z_{kn},k)$. But as they are not necessarily disjoint, this is not directly possible. In the proof of the bound (\ref{superestimate}), the only place where we used the disjointness of the points $z_1,...,z_N$ was the estimate (\ref{sommer}). In order to replace this estimate, we show the following lemma.
Hence, the crucial point of the lemma is, that the points $z_1,...,z_n \in \mathbb{Z}^d$ do not necessarily have to be disjoint.
\newpage
\begin{lem}\label{winter}
If the conditions on the path-loss function conditions $\ell$ are satisfied, then there exists a constant $C_1>0$ such that for all $n\geq 1$, $x \in \mathbb{R}^d$ and $z_1,...,z_n \in \mathbb{Z}^d$ with $nz_1,...,nz_n \in \mathbb{R}^d \setminus \text{Q}_{12n\sqrt{d}}(x)$ it holds that
\begin{equation*}
\sum\limits_{i=1}^n \ell_{6n}(|x-nz_i|) \leq C_1.
\end{equation*}
\end{lem}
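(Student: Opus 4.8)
The plan is to bound each of the (at most) $n$ summands by one and the same quantity --- we cannot instead sort the points $nz_i$ into distance shells, as the estimate (\ref{sommer}) does, precisely because the $z_i$ may coincide --- and then to use that the effective support of $\ell_{6n}$ sits at distances of order $n$. First I would observe that $nz_i\notin\text{Q}_{12n\sqrt d}(x)$ forces $\|x-nz_i\|_\infty>6n\sqrt d$, so that $|x-nz_i|\geq\|x-nz_i\|_\infty>6n\sqrt d\geq 3n\sqrt d$. By Definition \ref{def_shot} applied with $a=6n$ (so that $a\sqrt d/2=3n\sqrt d$), this gives $\ell_{6n}(|x-nz_i|)=\ell(|x-nz_i|-3n\sqrt d)$, and since $|x-nz_i|-3n\sqrt d>3n\sqrt d$ while $\ell$ is non-increasing by the standing hypotheses on $\ell$, we get $\ell_{6n}(|x-nz_i|)\leq\ell(3n\sqrt d)$. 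Summing over $i=1,\dots,n$ yields
\begin{equation*}
\sum_{i=1}^n\ell_{6n}(|x-nz_i|)\leq n\,\ell\big(3n\sqrt d\big),
\end{equation*}
uniformly in $x$ and in the choice of the $z_i$, so it remains only to bound $n\,\ell(3n\sqrt d)$ by a constant not depending on $n$.

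For that I would use that $\ell$ is non-increasing and integrable, $\int_{\mathbb{R}^d}\ell(|y|)\,dy<\infty$, which up to a dimensional constant reads $\int_0^\infty r^{d-1}\ell(r)\,dr<\infty$. Comparing with a dyadic slice, $\int_x^{2x}r^{d-1}\ell(r)\,dr\geq\ell(2x)\,\frac{2^d-1}{d}\,x^d$; the left-hand side is a tail of a convergent integral, hence tends to $0$ as $x\to\infty$, so $x^d\ell(2x)\to0$, i.e.\ $\ell(r)=o(r^{-d})$. Therefore $n\,\ell(3n\sqrt d)=o(n^{1-d})\to0$ because $d\geq2$; the sequence $\big(n\,\ell(3n\sqrt d)\big)_{n\geq1}$ is thus bounded, and $C_1:=\ell(0)+\sup_{n\geq1}n\,\ell(3n\sqrt d)$, which is finite and strictly positive since $\ell(0)>\tau N_0/P>0$, proves the claim. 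Under the stronger decay hypothesis of case (c) of Theorem \ref{thm_phase_transition} the very same dyadic comparison gives at once $\ell(r)=\mathcal{O}(r^{-d-1})$ and hence $n\,\ell(3n\sqrt d)=\mathcal{O}(n^{-d})$, which is of course also bounded.

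There is no serious obstacle here: the only point needing attention is the one already flagged, namely that coincidences among the $z_i$ invalidate the shell-counting bound (\ref{sommer}), and this is handled simply by there being only $n$ summands, each of size at most $\ell(3n\sqrt d)$, the product $n\,\ell(3n\sqrt d)$ being bounded because integrability of $\ell$ forces decay faster than $r^{-d}$ in dimension $d\geq2$.
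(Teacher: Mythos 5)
Your proof is correct, and the first half coincides with the paper's: both of you observe that $nz_i\notin\text{Q}_{12n\sqrt d}(x)$ forces $|x-nz_i|>6n\sqrt d$, hence $\ell_{6n}(|x-nz_i|)\leq\ell(3n\sqrt d)$, and reduce to bounding $n\,\ell(cn)$ by a constant (the paper relaxes further to $n\,\ell(n)$ via $\ell(3n\sqrt d)\leq\ell(n)$; that step is cosmetic). Where you part ways is in the closing estimate. The paper invokes the hypothesis of case~(c) of Theorem~\ref{thm_phase_transition}, namely $\int_x^\infty r^{d-1}\ell(r)\,dr=\mathcal O(1/x)$, rewriting $n\ell(n)=n^{1-d}\cdot n\cdot n^{d-1}\ell(n)\leq n^{1-d}\cdot n\sum_{i\geq n}i^{d-1}\ell(i)=\mathcal O(1)$. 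You instead use only the standing integrability assumption $\int_{\mathbb R^d}\ell(|y|)\,dy<\infty$ together with monotonicity of $\ell$: a dyadic-shell comparison $\int_x^{2x}r^{d-1}\ell(r)\,dr\geq\ell(2x)\frac{2^d-1}{d}x^d$ shows $x^d\ell(2x)\to 0$, i.e.\ $\ell(r)=o(r^{-d})$, whence $n\ell(3n\sqrt d)=o(n^{1-d})\to0$ for $d\geq2$ and the sequence is bounded. This is a genuine (if mild) improvement: you establish the lemma under weaker hypotheses on $\ell$ --- the $\mathcal O(1/x)$ tail decay is not actually needed here, only integrability and monotonicity --- and your aside that this decay would give the sharper $\ell(r)=\mathcal O(r^{-d-1})$ is correct. (The additive $\ell(0)$ in your definition of $C_1$ is unnecessary --- the supremum is already a valid strictly positive bound --- but harmless.) In short: same reduction, but a cleaner and more economical finish.
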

\begin{proof}
Let $n \geq 1$, $x \in \mathbb{R}^d$ and $z_1,...,z_n \in \mathbb{Z}^d$. For all $i \in\{1,...,n\}$ and $nz_i \in \mathbb{R}^d \setminus \text{Q}_{12n\sqrt{d}}(x)$, it clearly holds that the point $nz_i$ is at least $6n\sqrt{d}$ distant from $x$, that means 
$|x-nz_i| \geq 6n\sqrt{d}$. Moroever, the path-loss function $\ell$ is decreasing, and hence we obtain
\begin{equation*}
\ell_{6n}(|x-nz_i|) \leq \ell_{6n}(6n\sqrt{d}) =\ell(6n\sqrt{d}-6n\sqrt{d}/2)= \ell(3n\sqrt{d}).
\end{equation*}
Furthermore, it holds $3n\sqrt{d}\geq n$ as $d\geq2$ and hence $\ell(3n\sqrt{d})\leq \ell(n)$.
This yields
\begin{equation}\label{equation1}
\sum\limits_{i=1}^n \ell_{6n}(|x-nz_i|)
\leq
n\ell(n).
\end{equation}
By the constraint on $\ell$ in case (c) of Theorem \ref{thm_phase_transition} we know that
$n \sum\limits_{i=n}^\infty i^{d-1}\ell(i) \leq \mathcal{O}(1)$. Moreover, it holds $n^{1-d} \leq1$ as $d\geq2$. Hence we obtain
\begin{equation}\label{equation2}
n\ell(n)
=
n^{1-d}\, n  \, n^{d-1}\ell(n) 
\leq
n^{1-d}\,n \sum\limits_{i=n}^\infty i^{d-1}\ell(i)
\leq
\mathcal{O}(1).
\end{equation}
Combining equation (\ref{equation1}) and (\ref{equation2}) shows the assertion.\\
\end{proof}
Let $C_1>0$ be such that Lemma \ref{winter} holds. Then we can conclude that equation (\ref{superestimate}) holds for the not necessarily disjoint points $z(z_1,k),...,z(z_{kn},k)$ for some $C>0$ if $s\leq 1/C_1$. 
Together with equation (\ref{herbst}), it follows
\begin{align*}
\mathbb{P}(B_{n,M}^{\text{out}}(z_1)=0,...,B_{n,M}^{\text{out}}(z_N)=0)
&\leq
\exp(-knMs)
\mathbb{E}\Big[ \exp\Big(Cs\frac{kn}{n}\Lambda(\text{Q}_1)\Big) \Big]\\
&=
\Big(\exp(-Ms)\mathbb{E}\Big[ \exp\Big(\frac{Cs}{n}\Lambda(\text{Q}_1)\Big) \Big]\Big)^{kn}.
\end{align*}
Furthermore, as we consider case (c) of Theorem \ref{thm_phase_transition}, we know that the parameter
$\alpha^*=\sup\{\alpha>0 \colon \mathbb{E}[\exp(\alpha\Lambda(\text{Q}_1)))]<\infty\}$  
is positive. It follows for
$s \leq \min\{1/C_1,\alpha^*/C\}$ that
\begin{equation*}
\mathbb{E}\Big[ \exp\Big(\frac{Cs}{n}\Lambda(\text{Q}_1)\Big) \Big]
\leq
\mathbb{E}\Big[ \exp\Big(\frac{\alpha^*}{n}\Lambda(\text{Q}_1)\Big) \Big]
\longrightarrow
\exp(1)
\end{equation*}
as $n$ tends to infinity. This yields for $N=kn$
\begin{align*}
\mathbb{P}(B_{n,M}^{\text{out}}(z_1)=0,...,B_{n,M}^{\text{out}}(z_N)=0)
&\leq
(\exp(-Ms+1))^{kn}
:=q_\mathrm{B}'^{\,\,kn}
=q_\mathrm{B}'^N.
\end{align*}
Hence for $kn \leq N \leq (k+1)n$ and pairwise distinct $z_1,...,z_N \in \mathbb{Z}^d$ it holds
\begin{align*}
\mathbb{P}(B_{n,M}^{\text{out}}(z_1)=0,...,B_{n,M}^{\text{out}}(z_N)=0)
&\leq
\mathbb{P}(B_{n,M}^{\text{out}}(z_1)=0,...,B_{n,M}^{\text{out}}(z_{kn})=0)\\
&\leq
q_\mathrm{B}'^{\,\,kn}
=
q_\mathrm{B}^{2kn}
\leq
q_\mathrm{B}^N,
\end{align*}
where we defined $q_\mathrm{B}:=\sqrt{q_\mathrm{B}'}$. The last inequality holds as 
$2kn \geq (k+1)n \geq N$ for $k\geq 1$.
\\

\noindent \textbf{Step VI}\\
Remember that we have shown in Step IV that Proposition \ref{prop_B} holds for some large enough $n_1 \in \mathbb{N}$, $M_1>0$  and $q^{\text{in}}_{\mathrm{B}}<1$ if we replace all $B_{n,M}(\cdot)$ by $B_{n,M}^{\text{in}}(\cdot)$. Moreover, we have shown in the previous Step V that Proposition \ref{prop_B} holds for some large enough $n_2 \in \mathbb{N}$, $M_2>0$ and $q^{\text{out}}_{\mathrm{B}}<1$ if we replace all $B_{n,M}(\cdot)$ by $B_{n,M}^{\text{out}}(\cdot)$. 
Note that we can choose the parameters $M_1$ and $M_2$ so large that $q^{\text{in}}_{\mathrm{B}}$ respectively $q^{\text{out}}_{\mathrm{B}}$ are arbitrarily small, in specific such that $\sqrt{q^{\text{in}}_{\mathrm{B}}}+\sqrt{q^{\text{out}}_{\mathrm{B}}}<1$. 
Now, we bring both assertions together and show the original Proposition \ref{prop_B} without any replacements.
Let therefore be $n=\max(n_1,n_2)$ and $M=2 \cdot\max(M_1,M_2)$. Then it follows
\begin{align*}
\mathbb{P}(B_{n,M}(z_1)=0,...,B_{n,M}(z_N)=0)
&=
\mathbb{P}(I_{6n}(nz_1)>M,...,I_{6n}(nz_N)>M)\\
&\leq
\mathbb{P}(I^{\text{in}}_{6n}(nz_1)>M/2 \text{ or } I^{\text{in}}_{6n}(nz_1)>M/2,...)\\
&=
\mathbb{P}(I^{\text{in}}_{6n}(nz_1)>\max(M_1,M_2) \text{ or } I^{\text{in}}_{6n}(nz_1)>\max(M_1,M_2),...)\\
&\leq
\mathbb{P}(I^{\text{in}}_{6n}(nz_1)>M_1 \text{ or } I^{\text{in}}_{6n}(nz_1)>M_2,...)\\
&=
\mathbb{P}(B^{\text{in}}_{n,M}(z_1)B^{\text{out}}_{n,M}(z_1)=0,...,B^{\text{in}}_{n,M}(z_N)B^{\text{out}}_{n,M}(z_N)=0).
\end{align*}
In order to simplify the notation, we write
\medskip
\\ \indent 
$\hat{B}_{n,M}(z_i):=B^{\text{in}}_{n,M}(z_i) \cdot B^{\text{out}}_{n,M}(z_i)$, 
$\tilde{B}^{\text{in}}_{n,M}(z_i):=1-B^{\text{in}}_{n,M}(z_i)$
and
$\tilde{B}^{\text{out}}_{n,M}(z_i):=1-B^{\text{out}}_{n,M}(z_i)$. 
\medskip
\\ 
This yields
\begin{equation}\label{giraffe}
\mathbb{P}(B_{n,M}(z_1)=0,...,B_{n,M}(z_N)=0)
\leq
\mathbb{P}(\hat{B}_{n,M}(z_1)=0,...,\hat{B}_{n,M}(z_N)=0).
\end{equation}
Furthermore, with the new notation the following estimation holds
\begin{equation*}
1-\hat{B}_{n,M}(z_i)
=
1-B^{\text{in}}_{n,M}(z_i)B^{\text{out}}_{n,M}(z_i)
\leq
1-B^{\text{in}}_{n,M}(z_i)+1-B^{\text{out}}_{n,M}(z_i)
=
\tilde{B}^{\text{in}}_{n,M}(z_i)+\tilde{B}^{\text{out}}_{n,M}(z_i).
\end{equation*}
In order to estimate equation (\ref{giraffe}) further, let $(k_i)_{i=1}^N$ be a binary sequence, i.e., $k_i \in \{0,1\}$ and let $K$ denote the set of the $2^N$ such binary sequences. 
Then it follows
\begin{align*}
\mathbb{P}(\hat{B}_{n,M}(z_1)=0,...,\hat{B}_{n,M}(z_N)=0)
&=
\mathbb{P}(1-\hat{B}_{n,M}(z_1)=1,...,1-\hat{B}_{n,M}(z_N)=1)\\
&=
\mathbb{P}\Big(\prod\limits_{i=1}^N (1-\hat{B}_{n,M}(z_i))=1\Big)\\
&=
\mathbb{E}\Big(\prod\limits_{i=1}^N (1-\hat{B}_{n,M}(z_i))\Big)\\
&\leq
\mathbb{E}\Big(\prod\limits_{i=1}^N \Big(\tilde{B}^{\text{in}}_{n,M}(z_i)+\tilde{B}^{\text{out}}_{n,M}(z_i)\Big)\Big)\\
&=
\sum\limits_{(k_i)_{i=1}^N\in K} \mathbb{E}\Big( \prod\limits_{\{i \colon k_i=0\}}\tilde{B}^{\text{in}}_{n,M}(z_i)\prod\limits_{\{i \colon k_i=1\}}\tilde{B}^{\text{out}}_{n,M}(z_i) \Big)\\
&\leq
\sum\limits_{(k_i)_{i=1}^N\in K}\sqrt{ \mathbb{E}\Big( \prod\limits_{\{i \colon k_i=0\}}\Big(\tilde{B}^{\text{in}}_{n,M}(z_i)\Big)^2\prod\limits_{\{i \colon k_i=1\}} \Big(\tilde{B}^{\text{out}}_{n,M}(z_i) \Big)^2\Big)}\\
&=
\sum\limits_{(k_i)_{i=1}^N\in K}\sqrt{ \mathbb{E}\Big( \prod\limits_{\{i \colon k_i=0\}}\tilde{B}^{\text{in}}_{n,M}(z_i)\prod\limits_{\{i \colon k_i=1\}} \tilde{B}^{\text{out}}_{n,M}(z_i) \Big)}\\
&=
\sum\limits_{(k_i)_{i=1}^N\in K}\sqrt{ \mathbb{E}\Big( \prod\limits_{\{i \colon k_i=0\}}\tilde{B}^{\text{in}}_{n,M}(z_i)\Big)
\mathbb{E}\Big(
\prod\limits_{\{i \colon k_i=1\}} \tilde{B}^{\text{out}}_{n,M}(z_i) \Big)}
.
\end{align*}
Thereby, we used the Schwarz's inequality, the fact that $\tilde{B}^{\text{in}}_{n,M}(\cdot),\tilde{B}^{\text{out}}_{n,M}(\cdot) \in \{0,1\}$ and the independence of the random variables $\tilde{B}^{\text{in}}_{n,M}(\cdot)$ and $\tilde{B}^{\text{out}}_{n,M}(\cdot)$.
Together with equation (\ref{giraffe}) it follows
\begin{equation*}
\mathbb{P}(B_{n,M}(z_1)=0,...,B_{n,M}(z_N)=0)
\leq
\sum\limits_{(k_i)_{i=1}^N\in K}\sqrt{ \mathbb{E}\Big( \prod\limits_{\{i \colon k_i=0\}}\tilde{B}^{\text{in}}_{n,M}(z_i)\Big)
\mathbb{E}\Big(
\prod\limits_{\{i \colon k_i=1\}} \tilde{B}^{\text{out}}_{n,M}(z_i) \Big)}
.
\end{equation*}
\indent
In the next step we use our results from Step I and III-V. Remember that $q^{\text{in}}_\mathrm{B}$ denotes the parameter for which the proposition holds if we replace $B_{n,M}(\cdot)$ by $B_{n,M}^{\text{in}}(\cdot)$ and $q^{\text{out}}_\mathrm{B}$ is the parameter for which the proposition holds if we replace $B_{n,M}(\cdot)$ by $B_{n,M}^{\text{out}}(\cdot)$. Then it follows for sufficiently large $n$ and $M$:
\begin{align*}
\mathbb{P}(B_{n,M}(z_1)=0,...,B_{n,M}(z_N)=0)
&\leq
\sum\limits_{(k_i)_{i=1}^N\in K}\sqrt{  \prod\limits_{\{i \colon k_i=0\}}q^{\text{in}}_\mathrm{B} \prod\limits_{\{i \colon k_i=1\}}q^{\text{out}}_\mathrm{B}}\\
&=
\sum\limits_{(k_i)_{i=1}^N\in K}  \prod\limits_{\{i \colon k_i=0\}}\sqrt{q^{\text{in}}_\mathrm{B}}\prod\limits_{\{i \colon k_i=1\}}\sqrt{q^{\text{out}}_\mathrm{B}}\\
&=
\Big(\sqrt{q^{\text{in}}_\mathrm{B}}+\sqrt{q^{\text{out}}_\mathrm{B}} \,\Big)^N\\
&=:
q_\mathrm{B}^N.
\end{align*}
By our choice of the parameters $n$ and $M$ in the beginning of this step, it holds that $q_B<1$. Moreover, remember that it is possible to choose  $q^{\text{in}}_\mathrm{B}$ and $q^{\text{out}}_\mathrm{B}$ arbitrarily small and hence the parameter $q_B$ arbitrarily small.

\section{SINR with random powers percolation for Poisson point processes}
In this chapter we combine different features of the models which have been presented so far in order to model a more realistic situation. 
First, we include the interference from other users, that means we consider a SINR model, and second, we let the signal power be a random variable. 
As before, we aim to understand how far messages can travel through the telecommunication system.
Before we start to study this continuum percolation problem, we define in the following section the new model which incorporates the above described features.
\subsection{Model definition}
Let $X^\lambda = (X_i)_{i \in I}$ be a homogeneous Poisson point process(PPP) in $\mathbb{R}^d$ with intensity $\lambda >0$ and $d \geq 2$.
As before, the points of this point process represent the users or devices of a spatial telecommunication system.
Note that the PPP was formally introduced in Chapter 2.1.
In the next step, we define how a message can be transmitted successfully between these users. 
This is done very similar to the SINR model, which was introduced in Chapter 3.2, and hence we focus in this chapter on the new aspects and encourage the reader to consider Chapter 3.2 for further explanations.
\medskip
\\ \indent
Let us consider two points $X_i$ and $X_j$ of $X^\lambda$, where $X_i$ wants to send a message to $X_j$. 
As in the previous chapter, it depends on the signal to interference and noise ratio (SINR), if this message transmission is successfull. 
We define the SINR as 
\begin{equation*}
\text{SINR}((X_i, \rho_i), (X_j, \rho_j), X^\lambda, \rho, N_0, \gamma):=
\frac{\rho_i \cdot \ell(|X_i-X_j|)}{N_0+\gamma \sum\limits_{k \neq i,j} \rho_k \cdot \ell(|X_k-X_j|)}.
\end{equation*}
Analogously to the SINR model introduced in the previous chapter, we can see that the message transmission is interfered by some environment noise $N_0 >0$ and all other nodes that also transmit a signal. This strength of the interference caused by the other users is described by the factor $\gamma >0$. 
The smaller it is, the less is the message transmission interfered by surrounding users.
Furthermore, the SINR incorporates a path-loss function $\ell \colon [0,\infty) \rightarrow [0,\infty)$, which fulfills the following conditions:
\begin{enumerate}
\item $\ell$ is continuous and strictly decreasing on supp $\ell$,
\item $1 \geq \ell(0) $,
\item $\int\limits_{\mathbb{R}^d}\ell (|x|)\text{d}x < \infty$.
\end{enumerate}
\vspace{0.3cm} \
\indent
The only difference to the previous SINR model is, that we assume that the strength of the emitted signal is not a positive constant. Instead,it is represented by a random point process $\rho =(\rho_i)_{i \in I}$. Thereby, the idea is to attach to each user $X_i$ a random signal power $\rho_i$.
Remember that we can interpret the process $((X_i, \rho_i))_{i \in I}$ as a marked PPP, which we introduced in the second chapter. 
In our case, the set of marks are all nonnegative numbers, which means $\mathcal{M} = [0, \infty)$ and the probability kernel is given by $K \colon \mathbb{R}^d \times [0, \infty) \rightarrow [0,1]$.
The signal power of the different points are independent of each other and independent of $X^\lambda$. Furthermore, they are identically distributed as the signal power random variable $\rho$, which fulfills the following conditions:
\begin{enumerate}
\item $\mathbb{P}(\rho<0)=0$,
\item $\mathbb{P}(\rho=0)<1$,
\item $\mathbb{E}[\rho]<\infty$.
\end{enumerate}
These conditions are not very strong as they reflect a realistic and non-trivial model setting:
We assume that the signal power is nonnegative and that there is a positive probability that it is not constantly zero. Furthermore, we assume that the expectation of the random signal power is bounded.
\medskip
\\ \indent
As done in the previous chapter we say that two points $X_i$ and $X_j$ of $X^\lambda$ are able to communicate if their SINR exceeds a technical threshold $\tau>0$ in both cases. This means that it holds
$\text{SINR}((X_i, \rho_i), (X_j, \rho_j), X^\lambda, \rho, N_0, \gamma) > \tau$ and \\
$\text{SINR}((X_j, \rho_j), (X_i, \rho_i), X^\lambda, \rho, N_0, \gamma)> \tau$.
The graph which arises when we draw an edge between all communicating points, is denoted by $g_{(\gamma,N_0,\tau)}(X^\lambda, \rho)$. 
\medskip
\\ \indent
Note that the just described model has been studied by \cite{KongYeh2007} in the two-dimensional case and under 
stronger conditions on the random signal power. They assumed that it is bounded by a strictly positive minimal and a finite maximal power value, which both have a positive probability.
In the following, we consider the more general case where the random signal power can be unbounded and allow $d \geq 3$. Nevertheless, we briefly discuss in the remarks of this chapter the (easier) situation of a bounded random signal power.
\medskip
\\ \indent
After we have defined our new SINR model, we aim to study under which conditions a phase transition exists.
Let us therefore define for fixed parameters $N_0$ and $\tau$ the criticial density by
\begin{align*}
\lambda_{N_0,\tau}:=
\inf\{\lambda>0 \colon \gamma^*(\lambda')>0 \text{ for all } \lambda' \leq \lambda\},
\end{align*}
where
\begin{align*}
\gamma^*(\lambda)=\gamma^*(\lambda,N_0,\tau):=
\sup\{\gamma >0 \colon \,\mathbb{P}(g_{(\gamma,N_0,\tau)}(X^\lambda, \rho) \text{ percolates})>0\}.
\end{align*}
Remember that we say that a graph percolates if there exists an infinite connected component of users which are all able to send messages to each other.
\medskip
\\
\indent 
In the following Chapter 4.2, we show that there exists a subcritical phase in the SINR model with random signal powers. Afterwards we prove the existence of a supercritical phase in Chapter 4.3.
\newpage
\subsection{Existence of a subcritical phase}
Let us first consider the special case where we have no interference, which means $\gamma=0$.
Then, we define the critical density as
\begin{equation*}
\lambda_{\mathrm{c}}:=\inf\{\lambda>0 \colon g_{(0,N_0,\tau)}(X^\lambda,\rho) \text{ percolates}\}.
\end{equation*} 
The following Proposition \ref{kuchen} states that under certain decay and probability conditions on the path-loss function, there exists a subcritical phase for $\gamma =0$.
The proof is based on Theorem \ref{subcritical_1}, which states the existence of a phase transition in a Boolean model with random radii under certain conditions on the random radii.
After we have shown the proposition, we use it in order to prove the existence of a subcritical phase in our SINR model with interference, i.e., where $\gamma>0$. 
\begin{prop}\label{kuchen}
Let $N_0, \tau>0$ and $\gamma=0$. There exists a subcritical phase, which means $\lambda_{\mathrm{c}} >0$, if there exist parameters $r^* \in (0,\infty)$ and $\alpha, \beta >0$ with $\alpha \beta > 2d-1$ such that $\ell(r) \leq r^{-\beta}$ and $\mathbb{P}(\rho > r) \leq r^{-\alpha}$ for $r \geq r^*$.
\end{prop}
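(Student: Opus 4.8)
The plan is to reduce Proposition~\ref{kuchen} to the Boolean model with random radii and then invoke Theorem~\ref{subcritical_1}. Since $\gamma=0$, two points $X_i,X_j$ of $X^\lambda$ are joined by an edge of $g_{(0,N_0,\tau)}(X^\lambda,\rho)$ exactly when $\rho_i\,\ell(|X_i-X_j|)>\tau N_0$ and $\rho_j\,\ell(|X_i-X_j|)>\tau N_0$, so in particular such an edge forces $\min(\rho_i,\rho_j)\,\ell(|X_i-X_j|)>\tau N_0$. For each $i$ I would introduce the random radius
\begin{equation*}
R_i := \sup\{s\ge 0 \colon \rho_i\,\ell(s)\ge\tau N_0\},\qquad \sup\emptyset:=0.
\end{equation*}
Because $\ell(s)\le s^{-\beta}\to 0$ for $s\ge r^*$, each $R_i$ is finite; the $R_i$ are i.i.d.\ (being a fixed deterministic function of the i.i.d.\ marks $\rho_i$) and independent of $X^\lambda$. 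If $X_i$ and $X_j$ are adjacent in $g_{(0,N_0,\tau)}(X^\lambda,\rho)$, then $|X_i-X_j|\le R_i$ and $|X_i-X_j|\le R_j$, hence $|X_i-X_j|\le R_i+R_j$, i.e.\ the closed balls $B_{R_i}(X_i)$ and $B_{R_j}(X_j)$ of the Boolean model $(X,R,\lambda)$ intersect. Consequently every connected component of $g_{(0,N_0,\tau)}(X^\lambda,\rho)$ sits inside an occupied component of $(X,R,\lambda)$; in particular an infinite component of the former would produce an occupied component of the latter containing infinitely many balls.

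It then remains to check the moment condition of Theorem~\ref{subcritical_1} for $R:=R(\rho)$. If $s>r^*$ and $\rho\,\ell(s)\ge\tau N_0$, then $\tau N_0\le\rho\,s^{-\beta}$, so $s\le(\rho/(\tau N_0))^{1/\beta}$; hence
\begin{equation*}
R(\rho)\le\max\!\bigl\{r^*,\ (\rho/(\tau N_0))^{1/\beta}\bigr\}\le r^*+(\tau N_0)^{-1/\beta}\,\rho^{1/\beta},
\end{equation*}
and therefore $\mathbb{E}[R(\rho)^{2d-1}]\le C\bigl(1+\mathbb{E}[\rho^{(2d-1)/\beta}]\bigr)$ for a constant $C=C(d,\beta,\tau,N_0,r^*)$. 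Writing $p:=(2d-1)/\beta$ and using the layer-cake identity $\mathbb{E}[\rho^{p}]=\int_0^\infty p\,r^{p-1}\,\mathbb{P}(\rho>r)\,\mathrm{d}r$ together with $\mathbb{P}(\rho>r)\le r^{-\alpha}$ for $r\ge r^*$, the tail of the integral is dominated by $p\int_{r^*}^\infty r^{p-1-\alpha}\,\mathrm{d}r$, which converges precisely when $p<\alpha$, i.e.\ when $2d-1<\alpha\beta$ --- which is exactly the assumed inequality. Hence $\mathbb{E}[R(\rho)^{2d-1}]<\infty$, and Theorem~\ref{subcritical_1} yields a $\lambda_0>0$ such that for all $\lambda\in(0,\lambda_0)$, almost surely no occupied component of $(X,R,\lambda)$ contains infinitely many balls. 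By the domination above, $g_{(0,N_0,\tau)}(X^\lambda,\rho)$ then has no infinite component for such $\lambda$, so $\lambda_{\mathrm c}\ge\lambda_0>0$, which is the claim.

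The argument is a fairly clean comparison, so the only points requiring real care are: (i) verifying that $R(\rho)$ is well defined, almost surely finite, and has exactly the independence structure needed to invoke Theorem~\ref{subcritical_1} verbatim; and (ii) carrying out the moment estimate sharply enough that the hypothesis $\alpha\beta>2d-1$ --- and not some artificially stronger condition --- is what the computation actually consumes. The latter, which is the heart of the matter, is essentially the tail-integral step above; everything else is bookkeeping. (The passage from $\gamma=0$ to the interference case $\gamma>0$ is then a one-line monotonicity remark, since $g_{(\gamma,N_0,\tau)}(X^\lambda,\rho)$ is a subgraph of $g_{(0,N_0,\tau)}(X^\lambda,\rho)$.)
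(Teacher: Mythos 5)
Your proof is correct and follows essentially the same route as the paper's: you dominate the $\gamma=0$ SINR graph by the Boolean model with random radii $R(\rho)=\ell^{-1}(\tau N_0/\rho)$, invoke Theorem~\ref{subcritical_1}, and verify $\mathbb{E}[R^{2d-1}]<\infty$ from the hypotheses. The only (cosmetic) difference is in the bookkeeping of the moment bound: you first bound $R(\rho)\le r^*+(\tau N_0)^{-1/\beta}\rho^{1/\beta}$ and then compute $\mathbb{E}[\rho^{(2d-1)/\beta}]$ via the layer-cake formula, whereas the paper applies the layer-cake formula directly to $\mathbb{E}[R^{2d-1}]$ and only then splits the integral using the two tail assumptions; both routes consume exactly the hypothesis $\alpha\beta>2d-1$.
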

\begin{proof}
As $\gamma=0$, the message transmission between two points is not interfered by other surrounding nodes. In particular, 
two points $X_i$ and $X_j$ can communicate if
\begin{equation*}
\rho_i \cdot \ell(|X_i-X_j|)/N_0 > \tau
\text{   and   } 
\rho_j \cdot \ell(|X_i-X_j|)/N_0 > \tau.
\end{equation*} 
With the notation $R_k:=\ell^{-1}(\tau N_0/\rho_k)$ for all $k \in I$ and the convention that $R_k=0$ if $\tau N_0 / \rho_k > \ell (0)$, this is equivalent to
$
|X_i-X_j| < R_i \text{ and } |X_i-X_j| < R_j$.
In other words, the points $X_i$ and $X_j$ are only able to send messages to each other if
\begin{equation*}
|X_i-X_j| <\min\{R_i,R_j\}.
\end{equation*} 
The arising graph is denoted by $g_{(0,N_0,\tau)}(X^\lambda,\rho)$.
\medskip
\\ \indent
Let us now consider two other points $X_i$ and $X_j$ which are only able to communicate if 
$|X_i-X_j| <R_i+R_j$. 
The arising graph can be described by $g_{(0,N_0,\tau)}(X^\lambda,\mu)$, where the random variable $\mu$ is distributed as 
$\ell^{-1}(\tau N_0 / \rho)$.
It is clear that if there is no infinite cluster in $g_{(0,N_0,\tau)}(X^\lambda,\mu)$ there is no infinite cluster in $g_{(0,N_0,\tau)}(X^\lambda,\rho)$ as 
$\min\{R_i,R_j\} \leq R_i+R_j$.
If
$\mathbb{E}[\mu^{2d-1}] < \infty$
it follows by Theorem \ref{subcritical_1} that there exists a subcritical phase in $g_{(0,N_0,\tau)}(X^\lambda,\mu)$ and hence there exist positive densities such that no percolation occurs. 
\medskip
\\ \indent
In the following we show that under our given conditions on the path-loss function it holds that $\mathbb{E}[\mu^{2d-1}]$ is finite. 
Thereby, we use the fact that for any positive and continuous random variable $\mathbb{Z}$ we have
\begin{equation*}
\mathbb{E} [\mathbb{Z}] = \int\limits_0^\infty \mathbb{P}(\mathbb{Z}>t) \text{d}t.
\end{equation*}
In our case, this means
\begin{equation*}
\mathbb{E}[\mu^{2d-1}]
=
\mathbb{E}\big[\big( \ell^{-1}(\tau N_0 / \rho) \big)^{2d-1}\big]
=
\int\limits_0^\infty \mathbb{P} \big( \big(\ell^{-1}(\tau N_0 / \rho) \big)^{2d-1} >t \big) \text{d}t.
\end{equation*}
As the path-loss function $\ell$ is decreasing, it holds
\begin{align*}
\int\limits_0^\infty \mathbb{P} \big( \big(\ell^{-1}(\tau N_0 / \rho) \big)^{2d-1} >t \big) \text{d}t
&=
\int\limits_0^\infty \mathbb{P}  \big(\tau N_0 / \rho < \ell \big(t^{\frac{1}{2d-1}}\big) \big) \text{d}t\\
&=
\int\limits_0^\infty \mathbb{P}  \Big( \rho > \frac{\tau N_0 }{ \ell \big(t^{\frac{1}{2d-1}}\big)} \Big) \text{d}t.
\end{align*}
In the next step, we split the above term in order to use our assumption on the path-loss function $\ell$, which says that $\ell(r) \leq r^{-\beta}$ for large enough $r$, more precisely for $r \geq r^*$. 
Hence it holds for 
$t^{\frac{1}{2d-1}} \geq r^*$, that
$\ell \big(t^{\frac{1}{2d-1}}\big) \leq t^{\frac{-\beta}{2d-1}}$,
which yields
\begin{equation*}
\frac{\tau N_0 }{ \ell \big(t^{\frac{1}{2d-1}}\big)}  \geq \tau N_0 \, t^{\frac{\beta}{2d-1}}.
\end{equation*}
Thus, the inequality
\begin{equation}\label{yes}
\mathbb{P}  \Big( \rho > \frac{\tau N_0 }{ \ell \big(t^{\frac{1}{2d-1}}\big)} \Big) 
\leq
\mathbb{P}  \big( \rho > \tau N_0 \, t^{\frac{\beta}{2d-1}} \big) 
\end{equation}
holds for $t \geq (r^*)^{2d-1}$.
Moreover, we use our assumption on the random signal power, which means that
$\mathbb{P}(\rho > r) \leq r ^{-\alpha}$ for $r \geq r^*$.
In our case this implies
\begin{equation}\label{yesyes}
\mathbb{P}  \big( \rho > \tau N_0 \, t^{\frac{\beta}{2d-1}} \big) 
\leq
 \big(\tau N_0 \, t^{\frac{\beta}{2d-1}}\big)^{-\alpha}
\end{equation}
for 
$\tau N_0 \, t^{\frac{\beta}{2d-1}} \geq r^*$, or equivalently for 
$t \geq ( r^*/\tau N_0 )^{\frac{2d-1}{\beta}}$.
All together it follows that the equations (\ref{yes}) and (\ref{yesyes}) hold for 
$t \geq c:=\max\{(r^*)^{2d-1},( r^*/\tau N_0 )^{\frac{2d-1}{\beta}}\}$
and hence we obtain
\begin{align*}
\mathbb{E} [\mu^{2d-1}]
&=
\int\limits_0^\infty \mathbb{P}  \Big( \rho > \frac{\tau N_0 }{ \ell \big(t^{\frac{1}{2d-1}}\big)} \Big) \text{d}t\\
&=
\int\limits_0^c \mathbb{P}  \Big( \rho > \frac{\tau N_0 }{ \ell \big(t^{\frac{1}{2d-1}}\big)} \Big) \text{d}t
+
\int\limits_ c^\infty \mathbb{P}  \Big( \rho > \frac{\tau N_0 }{ \ell \big(t^{\frac{1}{2d-1}}\big)} \Big) \text{d}t\\
&\leq
\int\limits_0^ c \mathbb{P}  \Big( \rho > \frac{\tau N_0 }{ \ell \big(t^{\frac{1}{2d-1}}\big)} \Big) \text{d}t
+
\int\limits_ c^\infty \mathbb{P}  \big( \rho > \tau N_0 \, t^{\frac{\beta}{2d-1}} \big) \text{d}t\\
&\leq
\int\limits_0^ c \mathbb{P}  \Big( \rho > \frac{\tau N_0 }{ \ell \big(t^{\frac{1}{2d-1}}\big)} \Big) \text{d}t
+
\int\limits_ c^\infty  \big(\tau N_0 \, t^{\frac{\beta}{2d-1}}\big)^{-\alpha} \text{d}t.
\end{align*}
The first integral is clearly finite as $\mathbb{P}(\cdot) \in [0,1]$ and its integral bounds are finite. 
By using our constraint that $\alpha \beta  > 2d-1$, we can show that the second integral is as well finite. Let us have a closer look at the second integral.
\begin{align*}
\int\limits_ c^\infty  \big(\tau N_0 \, t^{\frac{\beta}{2d-1}}\big)^{-\alpha} \text{d}t
&=
(\tau N_0)^{-\alpha} \Big(- \frac{\alpha\beta} {2d-1} +1 \Big)^{-1}  \Big[ t^{- \frac{\alpha\beta} {2d-1} +1 }\Big]_c^\infty\\
&=
(\tau N_0)^{-\alpha} \Big(- \frac{\alpha\beta} {2d-1} +1 \Big)^{-1}  \Big( \lim\limits_{t \rightarrow \infty} t^{- \frac{\alpha\beta} {2d-1} +1 } -  c^{- \frac{\alpha\beta} {2d-1} +1 }\Big).
\end{align*}
Hence, the integral is finite, if and only if the exponent
$- \frac{\alpha\beta} {2d-1} +1 $ is smaller than zero. This is clearly the case:
\begin{equation*}
- \frac{\alpha\beta} {2d-1} +1
<
- \frac{2d-1} {2d-1} +1
=
-1+1
=0.
\end{equation*}
All together, it follows that $\mathbb{E} [\mu^{2d-1}]$ is finite, what was to be shown.
\\
\end{proof}
Note that common path-loss functions which fulfill the conditions of the above proposition are for example $\ell(r) = (1+r)^{-p}$ and $\ell(r) = \min\{1,r^{-p}\}$, where $p \geq 1$.
\begin{rem}
The above Proposition \ref{kuchen} holds in particular for any path-loss functions with bounded support. 
In the following we briefly show how the above proof becomes even easier in this situation.
Remember that our target is to show that $\mathbb{E}[\mu^{2d-1}]$ is finite in order to apply Theorem \ref{subcritical_1}. Thereby $\mu$ is a random variable which is distributed as $\ell^{-1}(\tau N_0 / \rho)$.
If the path-loss function $\ell$ has bounded support, it follows that
\begin{equation*}
r_{\max}:=\sup\{r \in [0,\infty) \colon \ell(r) \neq 0\} < \infty
\end{equation*}
 and hence for any $r \in [0,\infty)$ such that $\ell(r) \neq 0$ it holds $\ell^{-1}(r) \leq r_{\max}$.
In particular it holds
$\ell^{-1}(\tau N_0 / \rho) \leq r_{\max}$ and thus
$\mathbb{E}[\mu^{2d-1}] =\mathbb{E}\big[( \ell^{-1}(\tau N_0 / \rho) )^{2d-1}\big] \leq r_{\max}^{\,2d-1} < \infty$, what was to be shown.
\end{rem}
\begin{rem}
It is easy to see that Proposition \ref{kuchen} holds as well under the condition of a bounded random signal power as studied by \cite{KongYeh2007}.
In this case, we assume that there exists some $\rho_{\max} < \infty$ such that $\rho \leq \rho_{\max}$. It follows that 
$\tau N_0 /\rho  \geq \tau N_0 / \rho_{\max}>0$ and hence
$\ell^{-1}(\tau N_0 /\rho)  \leq \ell^{-1}( \tau N_0 / \rho_{\max}) < \infty$.
This yields $\mathbb{E}[\mu^{2d-1}] < \infty$, what was needed to apply Theorem \ref{subcritical_1} as done in the proof of the above proposition.
\end{rem}
From Proposition \ref{kuchen} follows directly the existence of a subcritical phase in our original model with interference. This is shown in the following corollary.
\begin{cor}\label{easypeacy}
Let $N_0, \tau>0$ and $\gamma >0$.
There exists a subcritical phase, which means $\lambda_{N_0,\tau}>0$, if there exist parameters $r^* \in (0,\infty)$ and $\alpha, \beta >0$ with $\alpha \beta > 2d-1$ such that $\ell(r) \leq r^{-\beta}$ and $\mathbb{P}(\rho > r) \leq r^{-\alpha}$ for $r \geq r^*$.
\end{cor}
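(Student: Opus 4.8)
The plan is to deduce the statement from Proposition \ref{kuchen} by the same graph--domination (monotonicity) argument that was used for the SINR model with constant powers in Chapter 3.2.

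First I would note that switching on the interference only removes edges. Fix $\lambda>0$, $\gamma>0$ and two points $X_i,X_j$ of $X^\lambda$. Every summand $\rho_k\,\ell(|X_k-X_j|)$ in the interference term is nonnegative, so (since $N_0>0$) the denominator of the SINR is at least $N_0$, whence
$$\frac{\rho_i\,\ell(|X_i-X_j|)}{N_0+\gamma\sum_{k\neq i,j}\rho_k\,\ell(|X_k-X_j|)}\;\le\;\frac{\rho_i\,\ell(|X_i-X_j|)}{N_0},$$
and likewise with $i$ and $j$ interchanged. Thus any edge present in $g_{(\gamma,N_0,\tau)}(X^\lambda,\rho)$ is also present in $g_{(0,N_0,\tau)}(X^\lambda,\rho)$, i.e.\ $g_{(\gamma,N_0,\tau)}(X^\lambda,\rho)\preceq g_{(0,N_0,\tau)}(X^\lambda,\rho)$. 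In particular, if $g_{(0,N_0,\tau)}(X^\lambda,\rho)$ has no infinite connected component almost surely, then the same holds for $g_{(\gamma,N_0,\tau)}(X^\lambda,\rho)$ for every $\gamma>0$.

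Next I would apply Proposition \ref{kuchen}: under the stated hypotheses on $\ell$ and $\rho$ there is a threshold $\lambda_{\mathrm c}>0$ such that for all $\lambda<\lambda_{\mathrm c}$ the graph $g_{(0,N_0,\tau)}(X^\lambda,\rho)$ does not percolate (its proof produces, via Theorem \ref{subcritical_1}, a density below which no infinite occupied component exists a.s.). Together with the domination above, for every $\lambda<\lambda_{\mathrm c}$ and every $\gamma>0$ we obtain $\mathbb{P}(g_{(\gamma,N_0,\tau)}(X^\lambda,\rho)\text{ percolates})=0$, hence $\gamma^*(\lambda)=\sup\emptyset=0$. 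From the definition of $\lambda_{N_0,\tau}$ it then follows at once that $\lambda_{N_0,\tau}\ge\lambda_{\mathrm c}>0$, which is the assertion.

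I do not expect a genuine obstacle: the whole of the work is contained in Proposition \ref{kuchen}, and this corollary is just the observation that a larger $\gamma$ yields a smaller graph. The only points that need a little care are that the monotonicity must be stated at the level of edge sets (so both directional SINR inequalities are used, matching the two-sided connection rule), and the bookkeeping with the conventions $\sup\emptyset=0$ and $\inf\emptyset=\infty$ in the definitions of $\gamma^*(\lambda)$ and $\lambda_{N_0,\tau}$.
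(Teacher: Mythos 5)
Your proposal is correct and follows essentially the same route as the paper's own proof: one observes the graph domination $g_{(\gamma,N_0,\tau)}(X^\lambda,\rho)\preceq g_{(0,N_0,\tau)}(X^\lambda,\rho)$ for $\gamma>0$ (since dropping the nonnegative interference can only enlarge the SINR, in both directions) and then invokes Proposition~\ref{kuchen}. The paper phrases this more tersely and omits your explicit bookkeeping with $\gamma^*(\lambda)$ and the conventions $\sup\emptyset=0$, but the argument is the same.
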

\begin{proof}
We first show that all the connections of the graph $g_{(\gamma,N_0,\tau)}(X^\lambda,\rho)$ do as well exist in the graph $g_{(0,N_0,\tau)}(X^\lambda,\rho)$, where we set $\gamma =0$, i.e., we cancel the interference from all other points.
Let us therefore consider two points $X_i$ and $X_j$ of the point process $X^\lambda$ which can send messages to each other in the model with $\gamma >0$. This means that 
\begin{equation*}
\frac{\rho_i \cdot \ell(|X_i-X_j|)}{N_0+\gamma \sum\limits_{k \neq i,j} \rho_k \cdot \ell(|X_k-X_j|)}> \tau
\text{   and    }
\frac{\rho_j \cdot \ell(|X_j-X_i|)}{N_0+\gamma \sum\limits_{k \neq j,i} \rho_k \cdot \ell(|X_k-X_i|)}> \tau,
\end{equation*}
i.e., the SINR is larger than the threshold $\tau$ in both directions.
As the interference is a positive value, it follows that 
\begin{equation*}
\frac{\rho_i \cdot \ell(|X_i-X_j|)}{N_0}> \tau
\text{   and    }
\frac{\rho_j \cdot \ell(|X_j-X_i|)}{N_0}> \tau,
\end{equation*}
which is exactly the condition of the model with $\gamma =0$ that guarantees a successfull message transmission between the two points $X_i$ and $X_j$.
Hence, there does clearly not exist an infinite cluster in the graph $g_{(\gamma,N_0,\tau)}(X^\lambda,\rho)$, if there is no one in $g_{(0,N_0,\tau)}(X^\lambda,\rho)$. This means our assertion follows if there exists a subcritical phase in $g_{(0,N_0,\tau)}(X^\lambda,\rho)$, what was shown in Proposition \ref{kuchen}.\\
\end{proof}
\subsection{Existence of a supercritical phase}
\noindent
In this section we show that there exists a supercritical phase if the random power exceeds a certain large enough constant with positive probability and some exponential moments are finite.
\begin{thm}\label{mymainresult}
Let $N_0, \tau>0$ and $\gamma>0$. For large enough $\lambda$ there exists a $\gamma^*(\lambda)>0$ such that the graph $g_{(\gamma,N_0,\tau)}(X^\lambda, \rho)$ percolates for any $\gamma \leq \gamma^*(\lambda)$, 
if the following conditions hold:
\begin{enumerate}
\item[a)] There exists a parameter
 $r \geq N_0 \tau /\ell (0)$ such that $\mathbb{P}(\rho > r) >0$.
\item[b)] There exists a parameter $\alpha >0$ such that $\mathbb{E} [ \exp( \alpha \rho)] = \int\limits_{[0,\infty)} \exp(\alpha \, m) K(\text{d}m) $ is finite.
\end{enumerate}
\end{thm}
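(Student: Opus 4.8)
The plan is to reproduce the three-step renormalization scheme of Theorem~\ref{thm_phase_transition}: map the continuum model onto $\mathbb{Z}^d$, show the lattice percolates, and transfer percolation back to $g_{(\gamma,N_0,\tau)}(X^\lambda,\rho)$. The preliminary choices are these: by assumption~(a) (using continuity of measure) we may fix $\rho_0$ with $\mathbb{P}(\rho>\rho_0)>0$ and $\rho_0\,\ell(0)>N_0\tau$, and then, since $\ell$ is continuous at $0$, a radius $r_0>0$ with $\rho_0\,\ell(r_0)>N_0\tau$. By the Marking Theorem~\ref{mark_thm}, the thinned process $Y^\lambda:=\{X_i\in X^\lambda:\rho_i>\rho_0\}$ is a homogeneous PPP of intensity $\lambda\,\mathbb{P}(\rho>\rho_0)$, which becomes large together with $\lambda$. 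We also use the power-weighted shot noises $I(z)=\sum_{X_i\in X^\lambda}\rho_i\,\ell(|z-X_i|)$ and $I_{6n}(nz)=\sum_{X_i\in X^\lambda}\rho_i\,\ell_{6n}(|nz-X_i|)$ built with the shifted path-loss function $\ell_a$ of Definition~\ref{def_shot}; the proof of Corollary~\ref{cor_shot_noise} extends verbatim to give $I(x)\leq I_{6n}(nz)$ whenever $x\in\text{Q}_{6n}(nz)$.

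Step~1 (mapping). Call $z\in\mathbb{Z}^d$ \emph{$n$-good} if $Y^\lambda\cap\text{Q}_n(nz)\neq\emptyset$ and any two points of $Y^\lambda\cap\text{Q}_{3n}(nz)$ are joined by a path in $g_{r_0}(Y^\lambda)\cap\text{Q}_{6n}(nz)$; put $A_n(z)=\mathds{1}\{z\text{ is }n\text{-good}\}$, $B_{n,M}(z)=\mathds{1}\{I_{6n}(nz)\leq M\}$, $C_{n,M}(z)=A_n(z)B_{n,M}(z)$, and declare $z$ open iff $C_{n,M}(z)=1$. For the connectedness bound (the analogue of Lemma~\ref{lem_A}) one estimates $\mathbb{P}(z\text{ is }n\text{-bad})$ by a sub-cube argument: partition $\text{Q}_{6n}(nz)$ into cubes of side $r_0/(2\sqrt d)$; if each of them contains a point of $Y^\lambda$ then $z$ is $n$-good, so $\mathbb{P}(z\text{ is }n\text{-bad})\leq(12n\sqrt d/r_0)^d\exp\!\big(-\lambda\,\mathbb{P}(\rho>\rho_0)(r_0/(2\sqrt d))^d\big)$, which tends to $0$ as $\lambda\to\infty$ for each fixed $n$. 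Since $A_n(\cdot)$ depends only on $Y^\lambda$ restricted to $\text{Q}_{6n}(nz)$, the $n$-good field is $b$-dependent for a fixed $b$, so the ``extract an independent subfamily'' argument of Lemma~\ref{lem_A} gives $\mathbb{P}(A_n(z_1)=0,\dots,A_n(z_N)=0)\leq q_{\mathrm A}^N$ with $q_{\mathrm A}$ arbitrarily small once $\lambda$ is large.

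Step~2 (interference control — the main obstacle). This is where assumption~(b) is used, and it is the technical heart. For pairwise distinct $z_1,\dots,z_N$, Markov's inequality gives, for any $s>0$,
\begin{equation*}
\mathbb{P}\big(B_{n,M}(z_i)=0\ \forall i\big)\;\leq\;\mathbb{P}\Big(\sum_{i=1}^N I_{6n}(nz_i)>NM\Big)\;\leq\;e^{-sNM}\,\mathbb{E}\Big[\exp\Big(s\!\sum_{i=1}^N I_{6n}(nz_i)\Big)\Big].
\end{equation*}
Writing $\sigma(x)=\sum_{i=1}^N\ell_{6n}(|x-nz_i|)$ and applying the Laplace functional of the marked PPP $((X_i,\rho_i))_i$ (Marking Theorem~\ref{mark_thm} and Campbell's Theorem~\ref{Campbell}), the last expectation equals $\exp\!\big(\lambda\int_{\mathbb{R}^d}\int_{[0,\infty)}(e^{sm\sigma(x)}-1)\,K(\mathrm dm)\,\mathrm dx\big)$. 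Distinctness of the $z_i$ and $\int_{\mathbb{R}^d}\ell(|x|)\mathrm dx<\infty$ give the uniform bound $\sigma(x)\leq\sum_{z\in n\mathbb{Z}^d}\ell_{6n}(|x-z|)\leq K_0<\infty$, exactly as in the proof of Proposition~\ref{prop_B}, while $\int_{\mathbb{R}^d}\sigma(x)\,\mathrm dx\leq N\int_{\mathbb{R}^d}\ell_{6n}(|x|)\,\mathrm dx=:Nc_n<\infty$ for each fixed $n$. For $s\leq\alpha/K_0$ the convexity inequality $e^{\beta y}-1\leq\beta(e^y-1)$ (valid for $\beta\in[0,1]$, $y\geq0$) with $\beta=s\sigma(x)/\alpha$ and $y=\alpha m$ yields $\int_{[0,\infty)}(e^{sm\sigma(x)}-1)K(\mathrm dm)\leq\frac{s\sigma(x)}{\alpha}(\mathbb{E}[e^{\alpha\rho}]-1)$, so
\begin{equation*}
\mathbb{P}\big(B_{n,M}(z_i)=0\ \forall i\big)\;\leq\;\Big(\exp\!\Big(-sM+\tfrac{\lambda s(\mathbb{E}[e^{\alpha\rho}]-1)c_n}{\alpha}\Big)\Big)^N=:q_{\mathrm B}^N,
\end{equation*}
and $q_{\mathrm B}<1$, arbitrarily small, by taking $M$ large. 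In contrast to the Cox setting of Chapter~3, the i.i.d.\ marks make this hold directly, with no inside/outside decomposition and no decay hypothesis on $\ell$ beyond integrability.

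Step~3 (combine and transfer). As in Proposition~\ref{prop_C}, the inequality $1-C_{n,M}(z)\leq(1-A_n(z))+(1-B_{n,M}(z))$, expansion over binary sequences, and the Cauchy--Schwarz inequality applied to the $\{0,1\}$-valued factors (this replaces the independence of $A_n$ and $B_{n,M}$ used there, which is not available here) give $\mathbb{P}(C_{n,M}(z_i)=0\ \forall i)\leq(\sqrt{q_{\mathrm A}}+\sqrt{q_{\mathrm B}})^N$; choosing $n$, then $\lambda$, then $M$ large makes this $<1$, and the Peierls argument yields an infinite cluster $\mathcal C$ of open sites. For neighbouring $z,z'\in\mathcal C$, $n$-goodness of $z$ links a point of $Y^\lambda\cap\text{Q}_n(nz)$ to one of $Y^\lambda\cap\text{Q}_n(nz')$ by an $r_0$-path inside $\text{Q}_{6n}(nz)$; each edge $\{X_i,X_j\}$ of this path has $|X_i-X_j|\leq r_0$ and $\rho_i,\rho_j>\rho_0$, and $B_{n,M}(z)=1$ together with the power-weighted Corollary~\ref{cor_shot_noise} gives $\sum_{k\neq i,j}\rho_k\ell(|X_k-X_j|)\leq I(X_j)\leq I_{6n}(nz)\leq M$, whence
\begin{equation*}
\frac{\rho_i\,\ell(|X_i-X_j|)}{N_0+\gamma\sum_{k\neq i,j}\rho_k\ell(|X_k-X_j|)}\;\geq\;\frac{\rho_0\,\ell(r_0)}{N_0+\gamma M}\;>\;\tau
\end{equation*}
for every $\gamma\leq\gamma^*(\lambda):=\frac{\rho_0\ell(r_0)-N_0\tau}{2\tau M}>0$, and symmetrically for $X_j\to X_i$. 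Hence these edges belong to $g_{(\gamma,N_0,\tau)}(X^\lambda,\rho)$, so $\bigcup_{z\in\mathcal C}\big(g_{r_0}(Y^\lambda)\cap\text{Q}_{6n}(nz)\big)$ contains an infinite component of that graph, proving $\lambda_{N_0,\tau}<\infty$. The only genuinely hard step is Step~2: taming the exponential moment of the unbounded-power shot noise, which is exactly what assumption~(b) provides.
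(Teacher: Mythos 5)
Your proposal is correct and follows the paper's three-step renormalization scheme, but it makes one substantive and genuinely advantageous change in the lattice set-up. The paper declares $z$ good if some point of $X^\lambda \cap \text{Q}_1(z)$ has power $>r$ \emph{and} any two points of $X^\lambda \cap \text{Q}_3(z)$ are joined in $g_\delta(X^\lambda) \cap \text{Q}_6(z)$; in its transfer step it takes the designated high-power points $X_i \in \text{Q}_1(z)$, $X_j \in \text{Q}_1(z')$, notes they are path-connected in $g_\delta$, and then asserts ``In particular, it follows that $|X_i - X_j| \leq \delta$''. That implication is false: a path is not a single edge, and if one instead checks the SINR condition edge by edge along the $g_\delta$-path, the intermediate vertices are arbitrary Poisson points whose powers $\rho_k$ can be arbitrarily small (or zero), so the required lower bound on the numerator $\rho_k\,\ell(\cdot)$ is unavailable there. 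Your thinning to $Y^\lambda = \{X_i : \rho_i > \rho_0\}$, with goodness defined via connectivity of $g_{r_0}(Y^\lambda)$, forces \emph{every} vertex of the linking path to carry power $>\rho_0$, which is exactly what the per-edge SINR inequality needs; this closes the gap. For the interference bound you and the paper both run Markov plus the Laplace functional of the marked PPP (Campbell's and the Marking Theorem) and both use the uniform bound $\sigma(x)\leq K_0$ inherited from the Cox chapter; your convexity inequality $e^{\beta y}-1 \leq \beta(e^y-1)$ for $\beta\in[0,1]$, $y\geq 0$ (taken with $\beta=s\sigma(x)/\alpha$, $y=\alpha m$) is a cleaner, self-contained substitute for the paper's Taylor argument on the moment-generating function near $\alpha=0$, and delivers the same $q_{\mathrm B}^N$ estimate with $q_{\mathrm B}$ controllable by $M$. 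The renormalization scale $n$ you carry over from the Cox setting is superfluous for a homogeneous PPP — the paper just fixes $n=1$ — but it does no harm. The remaining pieces (the sub-cube lower bound making $q_{\mathrm A}$ small, the Cauchy--Schwarz combination, Peierls, and the final computation of $\gamma^*$) are correct and agree with the paper's intent.
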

\noindent
Note that the last integral describes the moment-generating function, which is defined for any random variable $\mathbb{Y}$ as
$
M_{\mathbb{Y}} (t) := \mathbb{E} [ \exp(t \mathbb{Y})]$, where $ t \in \mathbb{R}
$.
\begin{proof}
Analogously to the proof of the existence of a supercritical phase in Theorem \ref{thm_phase_transition} in the previous chapter, the proof of this theorem consists of three steps: First, the continuous percolation problem is mapped on a discrete percolation problem. In the second step, we show the existence of a supercritical phase in the discrete case. And finally we show in the third step that discrete percolation implies continuum percolation.\\

\noindent \textbf{Step 1: Mapping on a lattice}\\
We start with two definitions which we need in order to map the continuous percolation problem on a discrete lattice.
\newpage
\begin{defi}
Let us define a site $z \in \mathbb{Z}^d$ as \textit{good} if
\begin{enumerate}
\item
there exists $X_k \in X^\lambda \cap \text{Q}_1(z)$ with signal power $\rho_k > r$ and
\item
every $X_i$, $X_j \in X^\lambda \cap \text{Q}_{3}(z)$ are connected by a path in $g_\delta(X^\lambda)\cap \text{Q}_{6}(z)$,
\end{enumerate}
where $\delta := \ell^{-1} (N_0 \tau / r) / 2$. 
Otherwise, it is called \textit{bad}. 
\end{defi}
\noindent
Remember that the constant $r\geq N_0 \tau/\ell(0)$ exists by the first condition in Theorem \ref{mymainresult} and it hence clearly holds that the parameter $\delta$ is strictly positive.
Moreover, the cube with side length $a \geq 0$ and center $z$ is denoted by $\text{Q}_a(z)$ and $g_\delta(X^\lambda)$ describes the Gilbert's graph which was introduced in Chapter 2.
\medskip
\\ \indent
In the following we define the shot noise processes. They are an adapted version of the ones we introduced in the previous chapter in Definition \ref{def_shot}. In comparison, we now include the random signal power.
\begin{defi}
For $a \geq 0$ and $z \in \mathbb{R}^d$ we define the shot noise processes as
\begin{equation*}
\tilde{I}(z) = \sum\limits_{(X_i, \rho_i) \in (X^\lambda, \rho)} \rho_i  \,\ell(|z-X_i|)
\text{   and   }
\tilde{I}_a(z) = \sum\limits_{(X_i, \rho_i) \in (X^\lambda, \rho)} \rho_i  \,\ell_a(|z-X_i|).
\end{equation*}
\end{defi}
Now, we map the continuous percolation problem to a discrete lattice percolation problem.
Thereby, the idea is to separate the percolation problem in two parts: The connectedness problem and the interference problem.
Let us define for $z \in \mathbb{Z}^d$ and $M>0$: \medskip
\\ \indent
$A(z)=\mathds{1}\{z \text{ is good}\}$, $B_{M}(z)=\mathds{1}\{\tilde{I}_{6}(z) \leq M\}$ and $C_{M}(z)=A(z) \cdot B_{M}(z)$.
\medskip
\\ 
A site $z$ in the lattice $\mathbb{Z}^d$ is then called \textit{open}, if $C_{M}(z)=1$, otherwise it is called \textit{closed}.
\\

\noindent\textbf{Step 2: Percolation in the lattice}\\
In this step we aim to show that the lattice $\mathbb{Z}^d$ percolates. 
The proof consists of three parts, each of it considers one of the random variables, which we defined in the previous step. 
First, we show Proposition \ref{klar}, 
which takes care of the connectedness problem and states that for sufficiently large $\lambda >0$ the probability that any pairwise distinct sites in the lattice are bad can be bounded arbitrarily small.
Afterwards, Proposition \ref{spass} states that for sufficiently large $M >0$ the probability that the interference at any pairwise distinct sites in the lattice is greater than $M$ can be bounded arbitrarily small.
In the third part, which is Proposition \ref{name}, we combine these two propositions.
Then, the percolation of the lattice follows immediatly with the Peierls argument (consider the proof of Theorem \ref{haselnuss} for more information).
\begin{prop}\label{klar}
Under the conditions a) and b) of Theorem \ref{mymainresult}, for all sufficiently large $\lambda >0$, there exists a constant $q_\mathrm{A} <1$ such that for any $N \in\mathbb{N}$ and pairwise distinct sites $z_1,...,z_N \in \mathbb{Z}^d$ it holds
\begin{equation*}
\mathbb{P}(A(z_1)=0,...,A(z_N)=0)\leq q_\mathrm{A}^N.
\end{equation*}
Moreover, for any $\epsilon >0$ and for sufficiently large $\lambda$, it holds $q_A \leq \epsilon$.
\end{prop}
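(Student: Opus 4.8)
The plan is to mirror the proof of Lemma \ref{lem_A}, exploiting that the event $\{z \text{ is good}\}$ is a local function of the marked point process. First I would observe that $A(z)=\mathds{1}\{z \text{ is good}\}$ is measurable with respect to the restriction of $((X_i,\rho_i))_{i\in I}$ to the cube $\text{Q}_6(z)$: condition (1) in the definition of \emph{good} only inspects the points of $X^\lambda$ in $\text{Q}_1(z)\subset\text{Q}_6(z)$ together with their marks, and condition (2) only inspects the points of $X^\lambda$ in $\text{Q}_3(z)\subset\text{Q}_6(z)$ and the subgraph $g_\delta(X^\lambda)\cap\text{Q}_6(z)$. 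Since $X^\lambda$ is a PPP and the marks are i.i.d.\ and independent of $X^\lambda$, the restrictions of the marked process to regions with disjoint interiors are independent (this is the independence property of the PPP together with the marking theorem \ref{mark_thm}); hence whenever the cubes $\text{Q}_6(z_{i_1}),\dots,\text{Q}_6(z_{i_m})$ have pairwise disjoint interiors, the variables $A(z_{i_1}),\dots,A(z_{i_m})$ are independent. In particular $\{A(z)\}_{z\in\mathbb{Z}^d}$ is finite-range dependent.

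Next I would extract such an independent subfamily from any $N$ pairwise distinct sites $z_1,\dots,z_N$. Partitioning $\mathbb{Z}^d$ into the $7^d$ residue classes modulo $7$, by pigeonhole one class contains at least $m:=\lceil N/7^d\rceil$ of the $z_i$; two distinct sites in the same class are at $\ell^\infty$-distance at least $7$, so their cubes $\text{Q}_6(\cdot)$ have disjoint interiors. Using that the full intersection is contained in the partial one, then independence and the stationarity (translation invariance) of the PPP, this yields
\begin{equation*}
\mathbb{P}(A(z_1)=0,\dots,A(z_N)=0)\leq\mathbb{P}(A(z_{i_1})=0,\dots,A(z_{i_m})=0)=\mathbb{P}(A(o)=0)^m\leq\mathbb{P}(A(o)=0)^{N/7^d},
\end{equation*}
the last step using $\mathbb{P}(A(o)=0)\in[0,1]$. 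Setting $q_\mathrm{A}:=\mathbb{P}(A(o)=0)^{1/7^d}$ gives the asserted bound $\mathbb{P}(A(z_1)=0,\dots,A(z_N)=0)\leq q_\mathrm{A}^N$.

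It remains to show $\mathbb{P}(A(o)=0)\to 0$ as $\lambda\to\infty$, which makes $q_\mathrm{A}$ arbitrarily small (in particular $q_\mathrm{A}<1$) for large $\lambda$. I would bound $\mathbb{P}(A(o)=0)$ by the sum of the probabilities that conditions (1) and (2) fail at the origin. Condition (1) fails exactly when $X^\lambda\cap\text{Q}_1(o)$ contains no point with mark exceeding $r$; by the marking theorem the points of $X^\lambda$ in $\text{Q}_1(o)$ with mark in $(r,\infty)$ form a Poisson process of intensity $\lambda\,\mathrm{Leb}(\text{Q}_1)\,\mathbb{P}(\rho>r)=\lambda\,\mathbb{P}(\rho>r)$, so this probability equals $\exp(-\lambda\,\mathbb{P}(\rho>r))\to 0$, using $\mathbb{P}(\rho>r)>0$ from assumption (a). For condition (2), I would subdivide $\text{Q}_6(o)$ into $K$ congruent subcubes of side length small enough that any two points lying in the same subcube or in two adjacent subcubes are at distance at most $\delta$; if every subcube contains at least one Poisson point, then any two points of $X^\lambda\cap\text{Q}_3(o)$ are joined, via a chain of adjacent (hence pairwise $\delta$-close) occupied subcubes, by a path in $g_\delta(X^\lambda)$ whose vertices all lie in $\text{Q}_6(o)$, so condition (2) holds; the probability that some subcube is empty is at most $K\exp(-\lambda v)$ with $v>0$ the subcube volume, which tends to $0$ as $\lambda\to\infty$. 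Hence $\mathbb{P}(A(o)=0)\to 0$, so for every $\epsilon>0$ one can choose $\lambda$ large enough that $q_\mathrm{A}\le\epsilon$; note that only assumption (a) is used here, assumption (b) being required solely for the interference bound in Proposition \ref{spass}. The only delicate point is the geometric bookkeeping in condition (2) — choosing the subcube size so that the connecting paths genuinely stay inside $\text{Q}_6(o)$ and reach every point of $\text{Q}_3(o)$ — but this is a routine covering argument rather than a real obstacle.
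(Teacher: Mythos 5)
Your proof is correct and follows the same route as the paper: finite-range ($7$-)dependence of the good-site indicators, a pigeonhole extraction of at least $N/7^d$ independent sites among any $N$ distinct ones, and the observation that $\mathbb{P}(A(o)=0)\to 0$ as $\lambda\to\infty$. You are in fact more careful than the paper in two places: you correctly conclude $\mathbb{P}(A(o)=0)^m\leq\mathbb{P}(A(o)=0)^{N/7^d}$ and set $q_\mathrm{A}:=\mathbb{P}(A(o)=0)^{1/7^d}$ (the paper slips and writes $\mathbb{P}(A(o)=0)^N$, which does not follow from $m\geq N/7^d$, nor does it define $q_\mathrm{A}$), and you actually justify $\mathbb{P}(A(o)=0)\to 0$ via the Poisson void probability for condition (1) and a subcube-covering argument for condition (2), where the paper only asserts this limit.
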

\begin{proof}
By our previous definition of $A(\cdot)$ we know that any random variable $A(z_i)$ only depends on the setting within a certain area around the point $z_i$. 
More precisely, for any $z_i$ and $z_j$ which are at least $7^d$ distant from each other, the random variables $A(z_i)$ and $A(z_j)$ are independent.
Hence, there exists $m \geq 1$ and a subset $\{k_j\}_{j=1}^m$ of $[N]=\{1,...,N\}$ such that $A(z_{k_1}),...,A(z_{k_m})$ are independent and $m \geq N/7^d$.
Then, we obtain
\begin{align*}
\mathbb{P}(A(z_1)=0,...,A(z_N)=0)
&\leq
\mathbb{P}(A(z_{k_1})=0,...,A(z_{k_m})=0)\\
&=
\mathbb{P}(A(o)=0)^m\\
&\leq
\mathbb{P}(A(o)=0)^N.
\end{align*}
Furthermore, we know by our definition that
\begin{equation*}
\lim\limits_{\lambda \rightarrow \infty}\mathbb{P}(A(o)=0)=0.
\end{equation*}
All together, the proposition is shown.\\
\end{proof}
\begin{prop}\label{spass}
Under the conditions a) and b) of Theorem \ref{mymainresult},
for all $\lambda >0$ and for all sufficiently large $M >0$, there exists a constant $q_B <1$ such that for any $N \in\mathbb{N}$ and pairwise distinct sites $z_1,...,z_N \in \mathbb{Z}^d$ it holds
\begin{equation*}
\mathbb{P}(B_{M}(z_1)=0,...,B_{M}(z_N)=0)\leq q_\mathrm{B}^N.
\end{equation*}
Moreover, for any $\epsilon >0$, $\lambda >0$, one can choose $M$ so large that $q_\mathrm{B} \leq \epsilon$.
\end{prop}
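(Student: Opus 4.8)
The plan is to follow the interference-control argument of Chapter~3 (Steps~III--IV of the proof of Proposition~\ref{prop_B}); it simplifies substantially here because the point process is an honest PPP, so no outer expectation over a random environment occurs, and the cubes $\text{Q}_6(z)$ have a fixed, unscaled side length, so that no inside/outside decomposition is needed. Since $\{B_M(z_i)=0\}=\{\tilde{I}_6(z_i)>M\}$, I would first dominate the joint event by a single deviation event and then apply the exponential Markov inequality: for every $s>0$,
\begin{equation*}
\mathbb{P}(B_M(z_1)=0,\dots,B_M(z_N)=0)\le\mathbb{P}\Big(\sum_{i=1}^N\tilde{I}_6(z_i)>NM\Big)\le e^{-sNM}\,\mathbb{E}\Big[\exp\Big(s\sum_{i=1}^N\tilde{I}_6(z_i)\Big)\Big].
\end{equation*}

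To evaluate the exponential moment I would view $((X_i,\rho_i))_{i\in I}$ as a PPP on $\mathbb{R}^d\times[0,\infty)$ with intensity measure $\lambda\,\mathrm{Leb}\otimes K$ (Marking Theorem~\ref{mark_thm}; equivalently, conditioning on $X^\lambda$ and using independence of the marks) and apply the Poisson Laplace functional of Theorem~\ref{Campbell} to the nonnegative function $(x,m)\mapsto s\,m\sum_{i=1}^N\ell_6(|z_i-x|)$, which gives
\begin{equation*}
\mathbb{E}\Big[\exp\Big(s\sum_{i=1}^N\tilde{I}_6(z_i)\Big)\Big]=\exp\Big(\lambda\int_{\mathbb{R}^d}\int_{[0,\infty)}\Big(e^{\,s\,m\sum_{i=1}^N\ell_6(|z_i-x|)}-1\Big)K(\mathrm{d}m)\,\mathrm{d}x\Big),
\end{equation*}
an identity valid with both sides in $[0,\infty]$; the estimates below will confirm that the right-hand side is finite.

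The decisive step uses the distinctness of $z_1,\dots,z_N$ and the decay of $\ell$: exactly as for the bound~(\ref{sommer}) of Chapter~3 with scale parameter $1$, one has $\sum_{i=1}^N\ell_6(|z_i-x|)\le\sum_{z\in\mathbb{Z}^d}\ell_6(|z-x|)\le\sum_{i=0}^{\infty}\big((2i+2)^d-(2i)^d\big)\ell_6(i)=:K_1$, which is finite and uniform in $x$ because $\int_{\mathbb{R}^d}\ell(|x|)\,\mathrm{d}x<\infty$. To handle the random power I would fix $\alpha>0$ with $\mathbb{E}[e^{\alpha\rho}]<\infty$ (condition~(b)) and set $s:=\alpha/K_1$, so that $t(x):=s\sum_{i=1}^N\ell_6(|z_i-x|)\le\alpha$ for every $x$; convexity of $\exp$ then gives $e^{t(x)m}-1\le\frac{t(x)}{\alpha}\big(e^{\alpha m}-1\big)$ for all $m\ge0$, and integrating first in $m$ and then in $x$, using $\int_{\mathbb{R}^d}\ell_6(|y|)\,\mathrm{d}y=:L<\infty$, yields
\begin{equation*}
\int_{\mathbb{R}^d}\int_{[0,\infty)}\Big(e^{\,s\,m\sum_{i=1}^N\ell_6(|z_i-x|)}-1\Big)K(\mathrm{d}m)\,\mathrm{d}x\le\frac{s\big(\mathbb{E}[e^{\alpha\rho}]-1\big)}{\alpha}\,NL.
\end{equation*}

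Combining the three displays, $\mathbb{P}(B_M(z_1)=0,\dots,B_M(z_N)=0)\le q_{\mathrm B}^{\,N}$ with $q_{\mathrm B}:=\exp\big(s(\lambda C-M)\big)$ and $C:=L\big(\mathbb{E}[e^{\alpha\rho}]-1\big)/\alpha$; since neither $s$ nor $C$ depends on $M$, any $M>\lambda C$ gives $q_{\mathrm B}<1$, and $q_{\mathrm B}\to0$ as $M\to\infty$ (only condition~(b) enters, condition~(a) being needed solely in Proposition~\ref{klar}). The union bound, the Markov step and the lattice-sum estimate $K_1<\infty$ are routine, the last being essentially copied from Chapter~3; the one genuinely new point --- and the step I expect to demand the most care --- is the reduction of $\mathbb{E}\big[e^{s\sum_i\rho_i\ell_6(\cdot)}\big]$ to the prescribed moment $\mathbb{E}[e^{\alpha\rho}]$: $s$ must be chosen small enough relative to $\alpha$ and the uniform bound $K_1$ that $s\,m\sum_i\ell_6(|z_i-x|)\le\alpha m$ for every $x$, so that convexity linearises the $\rho$-integral against $e^{\alpha\rho}-1$; this is also where one checks that the Laplace-functional integral is finite, justifying the appeal to Theorem~\ref{Campbell}.
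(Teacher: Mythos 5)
Your proof is correct and follows the same overall skeleton as the paper's: reduce the joint event to a single deviation event, apply the exponential Markov inequality, rewrite the exponential moment via the Marking Theorem~\ref{mark_thm} and the Laplace functional of Theorem~\ref{Campbell}, bound the lattice sum $\sum_i\ell_6(|z_i-x|)$ uniformly by a constant (your $K_1$ equals the paper's $K_0$, cited from \cite[page 88]{Tobias2019}), and choose $s=\alpha/K_1$. The one place you differ is how the inner integral $\int_{[0,\infty)}\big(e^{t(x)m}-1\big)K(\mathrm{d}m)$ is controlled. The paper argues via the moment-generating function $M_\rho(\alpha)$: it uses that $M_\rho'(0)=\mathbb{E}[\rho]$ and continuous differentiability to obtain, for any $C>1$ and small enough $\alpha$, the bound $\mathbb{E}[e^{\alpha\rho}]-1\le C\alpha\,\mathbb{E}[\rho]$, then substitutes the smaller exponent $t(x)\le\alpha$. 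You instead invoke convexity of $t\mapsto e^{tm}-1$ directly: since this function vanishes at $t=0$, $e^{t(x)m}-1\le\frac{t(x)}{\alpha}\big(e^{\alpha m}-1\big)$ for $t(x)\in[0,\alpha]$, giving the linearisation in one line without any MGF differentiability discussion. Your variant is cleaner and entirely rigorous, whereas the paper's step about the MGF's slope being ``still close'' to its value at zero is slightly informal; the price is that your constant involves $\mathbb{E}[e^{\alpha\rho}]-1$ rather than $\mathbb{E}[\rho]$, but both are finite by hypothesis and both lead to $q_\mathrm{B}=\exp(s\lambda\cdot\text{const}-sM)$, which can be driven to zero by taking $M$ large while $s$ stays fixed.
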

\begin{proof}
First, we make a simple approximation:
\begin{align*}
\mathbb{P}(B_{M}(z_1)=0,...,B_{M}(z_N)=0)
&=
\mathbb{P}(\tilde{I}_{6}(z_1) >M,..., \tilde{I}_{6}(z_N) >M) \\
&\leq
\mathbb{P}\Big(\sum\limits_{i=1}^N \tilde{I}_{6}(z_i) > N M \Big).
\end{align*}
By the Markov's inequality follows for any $s \geq 0$
\begin{equation}\label{markov_equation}
\mathbb{P}\Big(\sum\limits_{i=1}^N \tilde{I}_{6}(z_i) > N M \Big)
\leq
\exp(-sNM) \cdot \underbrace{\mathbb{E} \Big[  \exp \Big( s \sum\limits_{i=1}^N  \tilde{I}_{6}(z_i) \Big) \Big]}_{(*)}.
\end{equation}
In the following, we approximate the term $(*)$ further.
Note that by our definition, it holds
\begin{equation*}
\sum\limits_{i=1}^N  \tilde{I}_{6}(z_i)
=
\sum\limits_{i=1}^N   \sum\limits_{(X_k, \rho_k) \in (X^\lambda, \rho)} \rho_k  \,\ell_{6}(|z_i-X_k|)
=
\sum\limits_{(X_k, \rho_k) \in (X^\lambda, \rho)} \sum\limits_{i=1}^N    \rho_k  \,\ell_{6}(|z_i-X_k|)
\end{equation*}
and hence
\begin{equation*}
(*) =
\mathbb{E} \Big[  \exp \Big( s \sum\limits_{(X_k, \rho_k) \in (X^\lambda, \rho)} \sum\limits_{i=1}^N    \rho_k  \,\ell_{6}(|z_i-X_k|)\Big) \Big].
\end{equation*}
\indent
By Theorem \ref{mark_thm}, the \textit{Marking Theorem}, we know that the point process $((X_k, \rho_k))_{k \in I}$ is in distribution equal to the PPP on $\mathbb{R}^d \times [0, \infty)$ with intensity $\mu \otimes K$, where $\mu = \lambda \cdot \text{Leb}$, as we consider a stationary PPP.
We now apply the following Campbell's theorem
\begin{equation*}
\mathbb{E} \Big[ \exp \Big( s \sum\limits_k f(X_k, \rho_k) \Big) \Big]
=
\exp \Big( \,\, \int\limits_{\mathbb{R}^d \times [0, \infty)} \Big( \exp (s \cdot f(x,m))-1 \Big) \, \text{d} (\lambda \text{Leb} \otimes K)\Big)
\end{equation*}
for the measurable function
\begin{equation*}
f(x,m) = \sum\limits_{i =1}^N m \cdot \ell_{6} (|z_i -x|),
\end{equation*}
where $x \in \mathbb{R}^d$ and $m \in [0, \infty)$.
This yields
\begin{align*}
(*)
&=
\exp \Big( \,\, \int\limits_{\mathbb{R}^d \times [0, \infty)} \Big( \exp \Big(s \cdot  \sum\limits_{i =1}^N m \cdot \ell_{6} (|z_i -x|) \Big)-1 \Big) \, \text{d} (\lambda \text{Leb} \otimes K)\Big)\\
&=
\exp \Big( \, \,\int\limits_{\mathbb{R}^d}  \int\limits_{[0,\infty)} \Big( \exp \Big(m \, s \, \sum\limits_{i =1}^N \ell_{6} (|z_i -x|) \Big)-1 \Big) \, K(\text{d}m) \,  \lambda \text{Leb}(\text{d}x)\Big).
\end{align*}
Note that we have assumed that the marks are independent of the point process.
In the following, we have a closer look at the integral
\begin{equation*}
\int\limits_{[0,\infty)} \exp \Big(m \, s \,  \sum\limits_{i =1}^N  \ell_{6} (|z_i -x|) \Big) \, K(\text{d}m),
\end{equation*}
for any $x \in \mathbb{R}^d$.
Remember that the moment-generating function of the random signal power $\rho$ is given by
\begin{equation*}
M_{\rho} (\alpha) 
= \int\limits_{[0,\infty)} \exp( \alpha \cdot m) \, K( \text{d}m), \,\, \alpha \in \mathbb{R}.
\end{equation*}
Furthermore, the first moment of $\rho$ is given by
\begin{equation*}
\mathbb{E}(\rho) = \int\limits_{[0,\infty)} m \, K(\text{d}m)
\end{equation*}
and it is finite by assumption.
Moreover, we have assumed that there exists some $\alpha >0$ such that the integral 
$ \int\limits_0^\infty \exp( \alpha \cdot m) \, K( \text{d}m)$ is finite.
Hence, we have that the integral converges:
\begin{equation}\label{int_conv}
\lim\limits_{\alpha \downarrow 0} \int\limits_{[0,\infty)} \exp( \alpha \cdot m) \, K( \text{d}m)  =1.
\end{equation}
\indent
Now we use the fact that the first derivative of the moment-generating function of a random variable at zero equals the first moment of the random variable. This means for our signal power random variable $\rho$
\begin{equation*}
\frac{\partial}{\partial \alpha} M_{\rho} (\alpha) \Big \vert_{\alpha =0}
=
\frac{\partial}{\partial \alpha} \Big( \int\limits_{[0,\infty)} \exp( \alpha \cdot m) \, K( \text{d}m) \Big) \Big \vert_{\alpha =0}
=
\int\limits_{ [0, \infty)} m   \,K( \text{d}m).
\end{equation*}
As the moment-generating function $M_{\rho} (\alpha)$ is continuously differentiable at $\alpha =0$, we know that for small $\alpha >0$ the slope of $M_{\rho} (\alpha)$ is still close to the slope at $\alpha =0$.
Together with equation (\ref{int_conv}) it follows that for small enough $ \alpha >0$, the integral
$\int\limits_{[0,\infty)} \exp( \alpha \cdot m) \, K( \text{d}m)$ is close to one and its slope is close to the value
$\int\limits_{ [0, \infty)} m   \,K( \text{d}m)$.
We can conclude that for any $C \in (1,\infty)$ there exists some small enough $\alpha>0$ such that
\begin{equation*}
\int\limits_{ [0, \infty)} ( \exp ( \alpha \, m  ) )  \,K( \text{d}m)
\leq
C \, \alpha
\int\limits_{ [0, \infty)} m   \,K( \text{d}m) + 1.
\end{equation*}
Furthermore, we know that the integral
$
\int\limits_{[0,\infty)} K( \text{d}m) $
equals one
as $K$ is a probability kernel and hence a probability measure on the mark space. This yields for any $C \in (1,\infty)$ and accordingly chosen small enough parameter $\alpha >0$
\begin{equation}\label{tanzen}
\int\limits_{ [0, \infty)} ( \exp ( \alpha \, m ) -1)  \,K( \text{d}m)
\leq
\int\limits_{ [0, \infty)} C \, \alpha \, m   \,K( \text{d}m).
\end{equation}
\indent
From \cite[page 88]{Tobias2019} we know that there exists a constant $K_0 < \infty$, such that for any $x \in \mathbb{R}^d$, $n, N \in \mathbb{N}$ and distinct points $z_i \in \mathbb{R}^d$, we have
\begin{equation*}
\sum\limits_{i = 1}^N \ell _{6n} ( |nz_i - x | )
\leq
\sum\limits_{z \in \mathbb{Z}^d} \ell _{6n} ( |nz - x | ) \leq K_0.
\end{equation*}
Consider the proof of percolation in the SINR model in the previous chapter for a detailed calculation.
In particular, it holds for $n=1$, which we use in the following.\medskip
\\ \indent
Let us now fix $C \in (1, \infty)$.
Then we know that equation (\ref{tanzen}) holds for some small enough value $\alpha >0$ and hence as well for a smaller value
$\alpha \cdot \sum\limits_{i=1}^N \ell_{6}( |z_i - x | )  /K_0 \leq \alpha$. 
We obtain for any $x \in \mathbb{R}^d$
\begin{equation*}
\int\limits_{ [0, \infty)} \Big( \exp \Big(m \, \frac{\alpha}{K_0} \, \sum\limits_{i=1}^N \ell_{6}( |z_i - x | ) \Big) -1 \Big) 
\,K(\text{d}m)
\leq
\int\limits_{[0, \infty)} C \, m \, \frac{\alpha}{K_0} \, \sum\limits_{i=1}^N \ell_{6}( |z_i - x | )   \, K( \text{d}m).
\end{equation*}
We can conclude for $s = \alpha / K_0 >0$ that
\begin{align*}
(*)
&\leq
\exp \Big( \,\, \int\limits_{\mathbb{R}^d \times [0, \infty)} C \, m \, \frac{\alpha}{K_0} \, \sum\limits_{i=1}^N \ell_{6}( |z_i - x | )  \, \text{d} (\lambda \text{Leb} \otimes K)\Big)\\
&=
\exp \Big( C \, \frac{\alpha}{K_0} \,   \int\limits_{[0, \infty)} m   \, K( \text{d}m)  \int\limits_{\mathbb{R}^d}\,  \sum\limits_{i=1}^N \ell_{6}( |z_i - x | ) \, \lambda \text{Leb} (\text{d}x)\Big)\\
&=
\exp \Big( C \, \frac{\alpha}{K_0} \,   \int\limits_{[0, \infty)} m   \, K( \text{d}m) \,  \sum\limits_{i=1}^N  \int\limits_{\mathbb{R}^d}\ell_{6}( |z_i - x | ) \, \lambda \text{Leb} (\text{d}x)\Big)\\
&=
\exp \Big( C \, \frac{\alpha}{K_0} \,   \int\limits_{[0, \infty)} m   \, K( \text{d}m) \,  \sum\limits_{i=1}^N  \int\limits_{\mathbb{R}^d}\ell_{6}( | x | ) \, \lambda \text{Leb} (\text{d}x)\Big)\\
&=
\exp \Big( C \, \frac{\alpha}{K_0} \,   \int\limits_{[0, \infty)} m   \, K( \text{d}m) \,  N  \int\limits_{\mathbb{R}^d}\ell_{6}( | x | ) \, \lambda \text{Leb} (\text{d}x)\Big).
\end{align*}
Thereby we used that the Lebesgue measure is invariant under shifts. Note that the integrals
\begin{equation*}
\int\limits_{[0, \infty)} m   \, K( \text{d}m)
\text{ and }
\int\limits_{\mathbb{R}^d}\ell_{6}( | x | ) \, \lambda \text{Leb} (\text{d}x)
\end{equation*}
are finite by assumption. 
All together, it follows with equation (\ref{markov_equation}) for $s = \alpha / K_0 >0$ that\\

$
\mathbb{P}(B_{M}(z_1)=0,...,B_{M}(z_N)=0)
$
\begin{align*}
&\leq
\exp\Big(- \frac{\alpha}{K_0} NM\Big)
\exp \Big( C \, \frac{\alpha}{K_0}    \int\limits_{[0, \infty)} m   \, K( \text{d}m) \,  N  \int\limits_{\mathbb{R}^d}\ell_{6}( | x | ) \, \lambda \text{Leb} (\text{d}x)\Big)\\
&\leq
\Big[\exp \Big(-\frac{\alpha}{K_0} M +C \, \frac{\alpha}{K_0} \,   \int\limits_{[0, \infty)} m   \, K( \text{d}m) \, \int\limits_{\mathbb{R}^d}\ell_{6}( | x | ) \, \lambda \text{Leb} (\text{d}x)\Big)\Big]^N\\
&=:q_\mathrm{B}^N.
\end{align*}
As the choice of the parameters $C$ and $\alpha$ is independent of the parameter $M$, it is possible for any $\epsilon > 0$ to choose $M$ so large, that $q_\mathrm{B} \leq \epsilon$. Hence, we have shown the proposition.\\
\end{proof}
\begin{prop}\label{name}
Under the conditions a) and b) of Theorem \ref{mymainresult},
for all $\lambda >0$, there exists a constant $q_C <1$ such that for all $N \in\mathbb{N}$ and pairwise distinct sites $z_1,...,z_N \in \mathbb{Z}^d$ it holds
\begin{equation*}
\mathbb{P}(C_{M}(z_1)=0,...,C_{M}(z_N)=0)\leq q_\mathrm{C}^N.
\end{equation*}
Moreover, for any $\epsilon >0$, there exist $\lambda$, $M$ large enough so that $q_\mathrm{C} \leq \epsilon$.
\newpage
\end{prop}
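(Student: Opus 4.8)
The plan is to follow exactly the scheme used in the proof of Proposition \ref{prop_C}, feeding in the connectedness estimate of Proposition \ref{klar} and the interference estimate of Proposition \ref{spass}. Write $\tilde A(z_i) := 1 - A(z_i)$ and $\tilde B_M(z_i) := 1 - B_M(z_i)$. Since $A(z_i)$ and $B_M(z_i)$ take values in $\{0,1\}$, the product $\prod_{i=1}^N(1 - C_M(z_i))$ is $\{0,1\}$-valued, so that $\mathbb{P}(C_M(z_1)=0,\dots,C_M(z_N)=0) = \mathbb{E}\big[\prod_{i=1}^N (1-C_M(z_i))\big]$, and moreover $1 - C_M(z_i) = 1 - A(z_i)B_M(z_i) \leq \tilde A(z_i) + \tilde B_M(z_i)$ for each $i$. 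Expanding the resulting product over all $2^N$ binary sequences $(k_i)_{i=1}^N \in \{0,1\}^N$ gives
\begin{align*}
\mathbb{P}(C_M(z_1)=0,\dots,C_M(z_N)=0)
\leq \sum_{(k_i)_{i=1}^N \in \{0,1\}^N} \mathbb{E}\Big[\prod_{\{i \colon k_i = 0\}} \tilde A(z_i) \prod_{\{i \colon k_i = 1\}} \tilde B_M(z_i)\Big].
\end{align*}

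For each fixed binary sequence I would split the connectedness factor from the interference factor by the Cauchy--Schwarz inequality,
\begin{align*}
\mathbb{E}\Big[\prod_{\{i \colon k_i = 0\}} \tilde A(z_i) \prod_{\{i \colon k_i = 1\}} \tilde B_M(z_i)\Big]
\leq \sqrt{\mathbb{E}\Big[\prod_{\{i \colon k_i = 0\}} \tilde A(z_i)\Big]}\;\sqrt{\mathbb{E}\Big[\prod_{\{i \colon k_i = 1\}} \tilde B_M(z_i)\Big]},
\end{align*}
using that $\tilde A(z_i)^2 = \tilde A(z_i)$ and $\tilde B_M(z_i)^2 = \tilde B_M(z_i)$ because both are $\{0,1\}$-valued. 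Routing the estimate through Cauchy--Schwarz in this way has the advantage that I do not need to argue independence of $A(\cdot)$ and $B_M(\cdot)$ (which is not evident here, since $\tilde I_6(\cdot)$ sees every Poisson point and its mark). Now Proposition \ref{klar} yields $\mathbb{E}\big[\prod_{\{i \colon k_i=0\}}\tilde A(z_i)\big] = \mathbb{P}(A(z_i)=0 \text{ for all } i \text{ with } k_i=0) \leq q_{\mathrm A}^{\#\{i \colon k_i=0\}}$, and Proposition \ref{spass} yields $\mathbb{E}\big[\prod_{\{i \colon k_i=1\}}\tilde B_M(z_i)\big] \leq q_{\mathrm B}^{\#\{i \colon k_i=1\}}$.

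Substituting and summing over the $2^N$ sequences via the binomial theorem gives
\begin{align*}
\mathbb{P}(C_M(z_1)=0,\dots,C_M(z_N)=0)
\leq \sum_{(k_i)_{i=1}^N \in \{0,1\}^N} \prod_{\{i \colon k_i=0\}} \sqrt{q_{\mathrm A}} \prod_{\{i \colon k_i=1\}} \sqrt{q_{\mathrm B}}
= \big(\sqrt{q_{\mathrm A}} + \sqrt{q_{\mathrm B}}\big)^N =: q_{\mathrm C}^N.
\end{align*}
Finally, to arrange $q_{\mathrm C} < 1$, and indeed $q_{\mathrm C} \leq \epsilon$ for a prescribed $\epsilon > 0$, I would first use Proposition \ref{klar} to fix $\lambda$ large enough that $q_{\mathrm A} \leq \epsilon^2/4$, and then, with that $\lambda$ held fixed, use Proposition \ref{spass} to pick $M$ large enough that $q_{\mathrm B} \leq \epsilon^2/4$; equivalently one may first make $q_{\mathrm A}$ small and then require $q_{\mathrm B} < (1 - \sqrt{q_{\mathrm A}})^2$. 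This order of the choices matters, because $q_{\mathrm A}$ is controlled only through $\lambda$ while $q_{\mathrm B}$ can be made small via $M$ for every $\lambda$. I do not expect a genuine obstacle here: the content is entirely in Propositions \ref{klar} and \ref{spass}, and the present statement is a bookkeeping step; the only mild subtlety is the Cauchy--Schwarz splitting (in place of an independence claim) together with keeping the order of the parameter choices straight. Once Proposition \ref{name} is established, percolation of the lattice of open sites follows from the Peierls argument as in the proof of Theorem \ref{haselnuss}.
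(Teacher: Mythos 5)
Your argument is correct and follows the same Dousse-style outline the paper intends when it says the proof ``can be done analogously to the proof of Proposition \ref{prop_C}'': expand $\prod_i(1-C_M(z_i))$ over binary sequences, split the $\tilde A$-factors from the $\tilde B_M$-factors, and recombine via the binomial theorem into $(\sqrt{q_{\mathrm A}}+\sqrt{q_{\mathrm B}})^N$. Where you genuinely deviate, and improve, is the middle step. The paper's proof of Proposition \ref{prop_C} applies the inequality $\mathbb{E}[Z]\le\sqrt{\mathbb{E}[Z^2]}$ to the whole product $Z=\prod\tilde A\prod\tilde B$ (which is vacuous because $Z^2=Z$) and then factorizes $\mathbb{E}[\prod\tilde A\prod\tilde B]=\mathbb{E}[\prod\tilde A]\,\mathbb{E}[\prod\tilde B]$ by invoking independence of the connectedness and interference indicators. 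In the present chapter that independence is not evident and is in fact dubious: $A(z)$ depends on the Poisson points in $\mathrm{Q}_3(z)$ and their marks, and $\tilde I_6(z)$ sums over \emph{all} marked Poisson points including exactly those. You correctly flag this and route around it by the honest Cauchy--Schwarz bound $\mathbb{E}[XY]\le\sqrt{\mathbb{E}[X^2]}\sqrt{\mathbb{E}[Y^2]}$ with $X=\prod_{k_i=0}\tilde A(z_i)$, $Y=\prod_{k_i=1}\tilde B_M(z_i)$, which together with idempotency of indicators needs no independence at all. This is the right way to do it, and it actually repairs a gap in the paper's deferred proof. Your final remark on the order of parameter choices is also correct and is the one bit of content that must be stated: Proposition \ref{klar} tunes $q_{\mathrm A}$ only through $\lambda$, while Proposition \ref{spass} makes $q_{\mathrm B}$ small by choosing $M$ for any fixed $\lambda$, so one fixes $\lambda$ first and then $M$ with $q_{\mathrm B}<(1-\sqrt{q_{\mathrm A}})^2$.
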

\begin{proof}
The proof can be done analogously to the proof of Proposition \ref{prop_C} in the previous chapter.\\
\end{proof}
The percolation of the lattice follows from the usual Peierls argument, see for example the proof of Theorem \ref{haselnuss}.\\

\noindent\textbf{Step 3: Percolation in the SINR graph with random signal power}\\
In this last step, we proof that the existence of percolation in the lattice implies percolation in our original model with interference and random signal powers. We procede very similar to the third step of the proof of Theorem \ref{thm_phase_transition}.\medskip
\\ \indent
Let us first give a simple corollary, which we need in the following.
\begin{cor}\label{teatime}
For $a \geq 0$ it holds that 
$
\tilde{I}(x) \leq \tilde{I}_a(z)
$
for all $x \in \mathbb{R}^d$ and $z\in \text{Q}_a(x)$.
\end{cor}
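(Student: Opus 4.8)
The plan is to reproduce the argument of Corollary~\ref{cor_shot_noise} essentially verbatim, carrying the nonnegative random powers $\rho_i$ along as multiplicative factors. First I would record the geometric fact that if $z \in \text{Q}_a(x)$ then $|z-x| \leq a\sqrt{d}/2$, i.e.\ the distance from the centre of a cube of side length $a$ to any of its points is at most $a\sqrt{d}/2$. Combined with the triangle inequality this gives, for every point $X_i$ of $X^\lambda$,
\begin{equation*}
|z-X_i| - \frac{a\sqrt{d}}{2} \leq |z-x| + |x-X_i| - \frac{a\sqrt{d}}{2} \leq |x-X_i|.
\end{equation*}

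Next I would check the termwise inequality $\ell_a(|z-X_i|) \geq \ell(|x-X_i|)$. On the event $|z-X_i| < a\sqrt{d}/2$ the left-hand side equals $\ell(0)$, and $\ell(0) \geq \ell(|x-X_i|)$ because $\ell$ is strictly decreasing on its support, so $\ell(0)$ is its maximal value. On the complementary event $|z-X_i| \geq a\sqrt{d}/2$ the left-hand side equals $\ell\big(|z-X_i| - a\sqrt{d}/2\big)$, and the displayed chain of inequalities together with the monotonicity of $\ell$ yields $\ell\big(|z-X_i| - a\sqrt{d}/2\big) \geq \ell(|x-X_i|)$. Hence $\ell_a(|z-X_i|) \geq \ell(|x-X_i|)$ in all cases.

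Finally, since the random power satisfies $\mathbb{P}(\rho < 0) = 0$, every mark $\rho_i$ is almost surely nonnegative, so multiplying the termwise inequality by $\rho_i$ preserves it; summing over all $(X_i,\rho_i) \in (X^\lambda,\rho)$ then gives
\begin{equation*}
\tilde{I}(x) = \sum_{(X_i,\rho_i)} \rho_i\, \ell(|x-X_i|) \leq \sum_{(X_i,\rho_i)} \rho_i\, \ell_a(|z-X_i|) = \tilde{I}_a(z),
\end{equation*}
which is the assertion. I do not expect any real obstacle here: the only minor point is that the sums may take the value $+\infty$, but since all summands are nonnegative the monotone comparison of the two series is unproblematic. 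In effect this corollary is just the marked analogue of Corollary~\ref{cor_shot_noise}, and it will be used exactly as the latter was in Step~3, namely to bound the interference $\tilde I(X_i)$ at a Cox point sitting in a cube $\text{Q}_{6}(z)$ by the controlled quantity $\tilde I_{6}(z) \leq M$.
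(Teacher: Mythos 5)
Your proof is correct and takes exactly the approach the paper intends: the paper's own proof of Corollary~\ref{teatime} consists of a single sentence deferring to Corollary~\ref{cor_shot_noise} by analogy, and you have carried out that analogy in full, with the correct observation that the nonnegativity of the marks $\rho_i$ is precisely what allows the termwise inequality to survive multiplication and summation.
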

\begin{proof}
The proof can be done analgously to the proof of Corollary \ref{cor_shot_noise}.
\end{proof}
In the previous Step 2, we have shown that there exist parameters $\lambda$ and $M$ such that an infinite connected component arises in the lattice, which we denote by $\mathcal{C}$. 
This means by our definition that for any site $z$ in $\mathcal{C}$, it holds  $A(z) =1$ and $ B_{M}(z) = 1$. 
Let
$z, z' \in \mathcal{C}$ be two neighbouring sites in the lattice. Then we know $A(z)=A(z')=1$, i.e., the sites are good.
This implies that there exist $X_i \in \text{Q}_1(z)$ and $X_j \in \text{Q}_1(z')$, which are connected by a path in $g_\delta(X^\lambda) \cap \text{Q}_{6}(z)$. In particular, it follows that $|X_i -X_j| \leq \delta$.
Furthermore, we know by our definition that the random signal power of the points $X_i$ and $X_j$ exceeds a certain level, more precisely it holds $\rho_i , \rho_j > r$.\medskip
\\ \indent
Now, we use that for any $z \in \mathcal{C}$, it holds $B_{M}(z) = 1$, i.e., the interference at the point $z$ is bounded by the constant $M$: $\tilde{I}_{6}(z) \leq M$.
From Corollary \ref{teatime} follows $\tilde{I}(x) \leq \tilde{I}_{6}(z)$ for all $x \in \mathbb{R}$ and $z \in \text{Q}_{6}(x)$, and hence $\tilde{I}(x)\leq \tilde{I}_{6}(z)$ for all $x \in \text{Q}_{6}(z)$.  In particular for $x = X_i$, respectively $x = X_j$, it follows
$\tilde{I}(X_i) \leq M$, respectively $\tilde{I}(X_j) \leq M$.\medskip
\\ \indent
Using all this, we show that $X_i$ and $X_j$ are as well connected in $g_{(\gamma,N_0,\tau)}(X^\lambda, \rho)$, where $\gamma  \in (0,\gamma^*)$ with
\begin{equation*}
\gamma^* = \frac{N_0}{M} \Big( \frac{\ell(\delta)}{\ell(2\delta)}-1 \Big) >0.
\end{equation*}
Note that the parameter $\gamma^*$ is strictly positive as we assumed that the path-loss function $\ell$ is strictly decreasing and hence it holds
$\ell(\delta)> \ell(2 \delta)$.
All together, we have for any points $X_i$ and $X_j$ with $|X_i-X_j| \leq \delta$
\begin{align*}
\frac{\rho_i \ell(|X_i-X_j|)}{N_0+ \gamma \sum\limits_{k \neq i,j} \rho_k \ell(|X_k-X_j|)}
>
\frac{\epsilon \ell(\delta )}{N_0+ \gamma  M}
>
\frac{\epsilon \ell(\delta)}{N_0+ \gamma^*  M}
=
\frac{\epsilon \ell(2\delta)}{N_0}
=
\tau
,
\end{align*}
and thus our model percolates.\\
\end{proof}
We finish this section with the special case of $\gamma =0$, i.e., where no interference is considered.
The following proposition shows that model percolation is possible under some conditions on the random signal power.
The proof can be carried out very similar to the one of Lemma \ref{sonne}, which states the existence of a supercritical phase in a Boolean model with random radii for large enough densities.
Remember that $\lambda_{\text{c}}$ is defined in the beginning of Chapter 4.2 as the critical density in the case where $\gamma =0$ and $p_{\text{cr}}$ describes the critical percolation probability in a discrete lattice (see Chapter 2.2.2).
\begin{prop}\label{pflaume}
Let $N_0, \tau>0$ and $\gamma=0$. There exists a supercritical phase, which means $\lambda_{\mathrm{c}} < \infty$, 
if 
there exists $\epsilon \geq N_0 \tau / \ell(0)$ such that $\mathbb{P}(\rho>\epsilon) > \epsilon$ and $1- \epsilon >p_{\mathrm{cr}}$.
\end{prop}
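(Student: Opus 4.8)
The plan is to follow the proof of Lemma~\ref{sonne}: we map the continuum percolation problem onto a discrete site-percolation problem on a rescaled lattice, show that a site is open with probability exceeding $p_{\mathrm{cr}}$ once $\lambda$ is large enough, and then conclude from discrete percolation theory (cf.\ the discussion following Theorem~\ref{haselnuss}). Since $\gamma=0$, I would first recall from the proof of Proposition~\ref{kuchen} that two points $X_i,X_j$ are joined in $g_{(0,N_0,\tau)}(X^\lambda,\rho)$ exactly when $|X_i-X_j|<\min\{R_i,R_j\}$, where $R_k=\ell^{-1}(\tau N_0/\rho_k)$ with the convention $R_k=0$ if $\tau N_0/\rho_k>\ell(0)$.

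Next I would set up the coarse-graining. Using $\mathbb{P}(\rho>\epsilon)>\epsilon\ge N_0\tau/\ell(0)$ together with the right-continuity of $t\mapsto\mathbb{P}(\rho>t)$, one can pick $\epsilon'\ge\epsilon$ with $\epsilon'>N_0\tau/\ell(0)$ and $\mathbb{P}(\rho>\epsilon')>\epsilon$; then $\tilde r:=\ell^{-1}(N_0\tau/\epsilon')$ is well defined and strictly positive because $\ell$ is continuous, strictly decreasing on its support, and $N_0\tau/\epsilon'<\ell(0)$. I would then choose the mesh $\delta>0$ so small that any two points lying in a pair of adjacent cubes of the partition of $\mathbb{R}^d$ into cubes of side $\delta$ are at Euclidean distance strictly less than $\tilde r$ (the maximal such distance is a dimensional constant times $\delta$), and declare a cube $c$ \emph{open} if it contains at least one Poisson point $X_i$ with power $\rho_i>\epsilon'$, and \emph{closed} otherwise. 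Distinct cubes are independently open or closed, since they are determined by the restriction of the marked process $((X_i,\rho_i))_{i\in I}$ to disjoint regions.

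The geometric core is that two neighbouring open cubes automatically produce a graph edge: their distinguished points $X_i,X_j$ then satisfy $\rho_i,\rho_j>\epsilon'$, hence $R_i,R_j>\tilde r$, whereas $|X_i-X_j|<\tilde r$, so $|X_i-X_j|<\min\{R_i,R_j\}$ and $X_i,X_j$ are adjacent in $g_{(0,N_0,\tau)}(X^\lambda,\rho)$; thus an infinite cluster of open cubes induces an infinite connected component of the graph. To force such a cluster I would argue exactly as in Lemma~\ref{sonne}: using $1-\epsilon>p_{\mathrm{cr}}$, choose $N\in\mathbb{N}$ with $(1-(1-\epsilon)^N)(1-\epsilon)>p_{\mathrm{cr}}$, and then $\lambda^*>0$ so large that a cube of side $\delta$ contains at least $N$ Poisson points with probability at least $1-\epsilon$. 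Since $\mathbb{P}(\rho\le\epsilon')<1-\epsilon$, the probability that at least one of these points has power exceeding $\epsilon'$ is at least $1-(1-\epsilon)^N$, so for every $\lambda\ge\lambda^*$ one gets $\mathbb{P}(c\text{ open})\ge(1-(1-\epsilon)^N)(1-\epsilon)>p_{\mathrm{cr}}$; discrete percolation then yields an infinite cluster of open cubes almost surely, hence $\lambda_{\mathrm{c}}<\infty$.

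The only genuinely new point compared with Lemma~\ref{sonne} — and the place I expect the minor subtlety to sit — is that here an edge is created only when the \emph{minimum} of the two radii $R_i,R_j$ exceeds the distance, whereas in the Boolean model it is the \emph{sum} $\rho_i+\rho_j$ that governs overlap of the balls. This is precisely why one must build in a uniform positive lower bound $\tilde r$ on the radii of the ``good'' points (hence the passage from $\epsilon$ to $\epsilon'$ and the appeal to strict monotonicity of $\ell$), and choose the mesh $\delta$ small relative to $\tilde r$ rather than merely small; with that in place the remaining coarse-graining is entirely routine and mirrors Lemma~\ref{sonne} line for line.
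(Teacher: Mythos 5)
Your argument mirrors the paper's proof almost line for line: coarse-grain $\mathbb{R}^d$ into $\delta$-cubes, call a cube open when it contains a Poisson point with power above a fixed threshold, observe that adjacent open cubes yield an edge because $|X_i-X_j|$ is then smaller than both radii, and push the open-cube probability above $p_{\mathrm{cr}}$ via the $(1-(1-\epsilon)^N)(1-\epsilon)$ bound from Lemma~\ref{sonne}. The one place you go slightly beyond the paper is the passage from $\epsilon$ to $\epsilon'>N_0\tau/\ell(0)$ with $\mathbb{P}(\rho>\epsilon')>\epsilon$: the paper works with $\epsilon$ directly and chooses $\delta$ so that neighbouring-cube points lie within $\ell^{-1}(N_0\tau/\epsilon)$, which degenerates to $0$ in the boundary case $\epsilon=N_0\tau/\ell(0)$; your right-continuity argument cleanly rules that case out, so this is a small but genuine tightening rather than a different route.
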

\begin{proof}
Let us consider two points $X_i$ and $X_j$ of the PPP $X^\lambda$ which like to communicate. As explained in the beginning of the proof of Proposition \ref{kuchen}, this is only possible if their distance is smaller than $\min\{R_i,R_j\}$, i.e., if $|X_i-X_j|< \min\{R_i,R_j\}$. 
\medskip
\\ \indent
Now, the proof can be done analogously to the proof of Lemma \ref{sonne}. 
The only difference is the side length of the cubes $\delta>0$, which we choose so small that any two points in neighbouring cubes are at distance at most $\ell^{-1}(N_0 \tau /  \epsilon)$ (and not $2 \epsilon$ as in Lemma \ref{sonne}). This choice of $\delta$ provides that an infinite cluster of open cubes implies an unbounded component in our model. This can be shown as follows.
\medskip
\\ \indent
For any two neighbouring open cubes $c_i$ and $c_j$, there exist $X_i \in c_i$ and $X_j \in c_j$ with signal power $\rho_i >\epsilon $, respectively $\rho_j >\epsilon$. 
It follows
$N_0 \tau / \rho_i < N_0 \tau / \epsilon$ and as $\ell^{-1}$ is decreasing it holds
$R_i =\ell^{-1}(N_0 \tau / \rho_i) > \ell^{-1} (N_0 \tau / \epsilon)$, respectively $R_j> \ell^{-1} (N_0 \tau / \epsilon)$. 
In other words, we have
\begin{equation*}
\ell^{-1} (N_0 \tau / \epsilon)<\min\{R_i, R_j\}.
\end{equation*}
By our choice of the side length of the cubes we know that $|X_i -X_j| < \ell^{-1} (N_0 \tau / \epsilon)$ and thus
$|X_i -X_j| < \min \{R_i,R_j\}$. This means that the points $X_i$ and $X_j$ are connected in our model.
Hence, if there is an infinite cluster of open cubes, there exists an infinite component in our model.
This shows the assertion.\\
\end{proof}
\begin{rem}
As before, it is easy to see that the above proposition holds if the random signal power is bounded from below by a large enough value as studied by \cite{KongYeh2007}. More precisely, we assume that there exists $\rho_{\min} > N_0 \tau / \ell(0)$ such that $\rho \geq \rho_{\min}$.
Then it follows 
$N_0 \tau / \rho \leq N_0 \tau / \rho_{\min}$ and hence 
$\ell^{-1} (N_0 \tau / \rho_{\min}) \leq \ell^{-1} (N_0 \tau / \rho)$. 
The last expressions are well defined as 
$\rho_{\min} > N_0 \tau / \ell(0)$ and hence
$\ell(0) > N_0 \tau / \rho_{\min} $.
Accordingly to the above introduced notation, we write $R_{\min}:=\ell^{-1} (N_0 \tau / \rho_{\min})$. 
As 
$R_{\min} \leq \min\{R_i,R_j\}$, 
all connections in the graph $g_{(0,N_0,\tau)}(X^\lambda,\rho)$ do as well exist in the graph $g_{(0,N_0,\tau)}(X^\lambda,\rho_{\min})$, where the radius is constant. This latter graph is a Boolean model with constant radii and hence we know that it percolates for large enough densities. And thus, in our graph with random radii and no interference $g_{(0,N_0,\tau)}(X^\lambda,\rho)$ percolation occurs.
\end{rem}

\section{Outlook}
In the previous chapters we have studied different approaches modeling a telecommunication system. They all aim to understand under which conditions users of the telecommunication system are able to communicate even if they are located far away from each other.
In order to be able to learn something from those models, it is important to create models which reflect the reality.
In this last chapter, we discuss the possibilities within the presented models which could make them even more realistic.
\medskip
\\ \indent
Remember that we discussed two possible sources of randomness in the models: The first one is the location of the users, expressed by a random point process, and the second one are the message trajectories.
As already mentioned before, there are different possibilities for the random point process. In this thesis we sometimes considered the Poisson point process (PPP) and sometimes the Cox point process (CPP). 
Thereby the latter one is more sophisticated, but as well more realistic. 
So far, the Boolean model has been studied for CPPs in \cite{HirschJahnelCali2018} and the the SINR model for CPPs has been analysed in \cite{Tobias2019}. It remains to consider the SINR model with random transmission powers for CPPs.
\medskip
\\ \indent
Let us now consider the second source of randomness, the message trajectories.
Remember that we incorporated this factor when we considered the Boolean model with random radii in Chapter 2.2.3.
Moreover, we included random transmission powers in the SINR model in Chapter 4.
Still, there are further modifications possible. We always assumed that the random radii, respectively the random transmission powers, do not depend on the users to which they are attached. Hence, it would be interesting to analyse the case where there exists a dependence between the random radii/transmission powers and the position of the users in space.

\addcontentsline{toc}{section}{References}

\end{spacing}
\end{document}